\documentclass[12pt]{amsart}
\usepackage{amsmath,amsthm,amssymb,amsfonts,enumerate,color}
\usepackage{amsmath, amsthm, amsfonts, amssymb, color}
\usepackage{mathrsfs}
\usepackage{ amsmath, amsthm, amsfonts, amssymb, color}
 \usepackage{mathrsfs}
\usepackage{amsfonts, amsmath}
 \usepackage{amsmath,amstext,amsthm,amssymb,amsxtra}
 \usepackage{txfonts} %also pxfonts
 \usepackage[colorlinks, citecolor=blue,pagebackref,hypertexnames=false]{hyperref}
 \allowdisplaybreaks
 \usepackage{pgf,tikz}
 \textwidth =165mm
 \textheight =230mm
 \oddsidemargin 2mm
 \evensidemargin 2mm
\setlength{\topmargin}{-0.2cm}

\begin{document}
 %\baselineskip 15pt
%\hfuzz=6pt

%\documentclass[a4paper,12pt,draft]{amsart}
%\usepackage{amsmath,amsthm,amssymb,amsfonts,enumerate,color}

 %\setlength{\oddsidemargin}{ 0.3cm}
%\setlength{\evensidemargin}{ -0.3cm}
%\setlength{\topmargin}{-0.4cm}
%\textheight 22.8cm
%\textwidth 16cm
%\oddsidemargin =-2pt \evensidemargin = 9pt \textwidth = 440pt

 %\date{ }

\newcommand{\norm}[1]{\left\Vert#1\right\Vert}
\newcommand{\abs}[1]{\left\vert#1\right\vert}
\newcommand{\set}[1]{\left\{#1\right\}}
\newcommand{\Real}{\mathbb{R}}
\newcommand{\RR}{\mathbb{R}^n}
\newcommand{\supp}{\operatorname{supp}}
\newcommand{\card}{\operatorname{card}}
\renewcommand{\L}{\mathcal{L}}
\renewcommand{\P}{\mathcal{P}}
\newcommand{\T}{\mathcal{T}}
\newcommand{\A}{\mathbb{A}}
\newcommand{\K}{\mathcal{K}}
\renewcommand{\S}{\mathcal{S}}
\newcommand{\blue}[1]{\textcolor{blue}{#1}}
\newcommand{\red}[1]{\textcolor{red}{#1}}
\newcommand{\Id}{\operatorname{I}}

\newtheorem{thm}{Theorem}[section]
\newtheorem{prop}[thm]{Proposition}
\newtheorem{cor}[thm]{Corollary}
\newtheorem{lem}[thm]{Lemma}
\newtheorem{lemma}[thm]{Lemma}
\newtheorem{exams}[thm]{Examples}
\theoremstyle{definition}
\newtheorem{defn}[thm]{Definition}
\newtheorem{rem}[thm]{Remark}

\numberwithin{equation}{section}

\title[Poisson integrals of Schr\"odinger operators with BMO traces]
{On characterization of   Poisson integrals of Schr\"odinger operators with BMO traces}

 \author[X.T. Duong, L.X. Yan and C. Zhang]{Xuan Thinh Duong, \ Lixin Yan \  and\ Chao Zhang}

\address {Department of Mathematics, Macquarie University, NSW 2109, Australia}
\email{xuan.duong@mq.edu.au}
 \address{Department of Mathematics\\ Zhongshan University \\
         Guangzhou 510275, PR China}
 \email{mcsylx@mail.sysu.edu.cn}

 \address{School of Statistics and Mathematics \\
             Zhejiang Gongshang University \\
             Hangzhou 310018, PR China}
 \email{zaoyangzhangchao@163.com}
%\thanks{X.T. Duong was supported by Australia Research Council (ARC) and Macquarie University.  L.  Yan was supported by
 %NNSF of China (Grant No.  10925106), Guangdong Province Key Laboratory of Computational Science
 %and Grant for Senior Scholars from the Association of Colleges and Universities of Guangdong.}
  %\date{\today}
 \subjclass[2010]{42B35, 42B37, 35J10,  47F05}
\keywords{Poisson integrals,    Schr\"odinger operators,    BMO space, Lipschitz space, Carleson measure,
 reverse H\"older inequality,     Dirichlet problem.}

\begin{abstract}  Let $\L$ be a Schr\"odinger operator of the form $\L=-\Delta+V$ acting on $L^2(\mathbb R^n)$ where the nonnegative
potential $V$ belongs to  the reverse H\"older class    $B_q$ for some $q\geq n.$
 Let ${\rm BMO}_{{\mathcal{L}}}(\RR)$ denote the BMO
space on $\RR$ associated to the Schr\"odinger operator $\L$.
In this article we will
%study  endpoint solvability result for the equation $ {\mathbb L}u  =0$ in ${\mathbb R}^{n+1}_+$ having boundary values with BMO data, and
show that
 a function  $f\in {\rm BMO}_{{\mathcal{L}}}(\RR)$ is the trace of the solution of
${\mathbb L}u=-u_{tt}+\L u=0,  u(x,0)= f(x),$
 where $u$ satisfies  a Carleson condition
\begin{eqnarray*}
 \sup_{x_B, r_B} r_B^{-n}\int_0^{r_B}\int_{B(x_B, r_B)}  t|\nabla   u(x,t)|^2 {dx dt }  \leq C <\infty.
\end{eqnarray*}
Conversely, this Carleson   condition characterizes  all the ${\mathbb L}$-harmonic functions whose traces belong to
the space ${\rm BMO}_{{\mathcal{L}}}(\RR)$.
This result extends the  analogous characterization founded by Fabes, Johnson and Neri in \cite{FJN}
for   the classical BMO space  of John and Nirenberg.
%As a consequence, we derive endpoint
%solvability results for the Dirichlet problems having boundary values with BMO data.

%when $\L$ is the usual Laplacian  $-\Delta $ on $\RR$.
%  As a consequence, we obtain the solvability of the   Dirichlet problem for Schr\"odinger operators with data
% in ${\rm BMO}_{{\mathcal{L}}}(\RR)$.
 % This extends a result of Fabes, Johnson and Neri in \cite{FJN} on functions in the classical BMO space.

 \end{abstract}

\maketitle

 \tableofcontents

\section{Introduction and statement of the main result}
\setcounter{equation}{0}

Consider the Laplace operator $\Delta=\sum_{i=1}^n\partial_{x_i}^2$ on the Euclidean space $\mathbb R^n$. A basic tool
in harmonic analysis to study a (suitable) function $f(x)$ on $\mathbb R^n$ is to consider a harmonic function
on $\mathbb R^{n+1}_{+}$ which has the boundary value as $f(x)$. A standard choice for such a harmonic
function is the Poisson
integral $e^{-t\sqrt{-\Delta}} f(x)$ and one recovers $f(x)$ when letting $t \rightarrow 0^{+}$. In other words,
one obtains $u(x,t) = e^{-t\sqrt{-\Delta}} f(x)$ as the solution of the equation  $u_{tt} +\Delta u=0,  u(x,0)= f(x)$.
This approach is intimately related to
the study of singular integrals. For the classical case $f \in L^p(\mathbb R^n)$, $1 \le p \le \infty$, we refer the reader to Chapter 2
of the standard textbook  \cite{SW}.

At the end-point space $L^{ \infty}(\RR)$, the study of singular integrals has a natural substitution, the BMO space, i.e. the space
of functions of bounded mean oscillation.
A celebrated   theorem of  Fefferman and Stein \cite{FS}   states    that
%In a celebrated  paper(\cite{FS}), Fefferman and Stein   states  that
 a BMO function   is the trace of the solution of
 $\partial_{tt}u +\Delta u=0,  u(x,0)= f(x),$
 whenever $u$ satisfies
\begin{eqnarray} \label{e1.1}
 \sup_{x_B, r_B} r_B^{-n}\int_0^{r_B}\int_{B(x_B, r_B)}  t|\nabla  u(x,t)|^2 {dx dt }  \leq C<\infty,
\end{eqnarray}
where $\nabla=(\nabla_x, \partial_t).$  Expanding on this result,
   Fabes, Johnson and Neri \cite{FJN} showed that  condition  \eqref{e1.1}
   characterizes  all the harmonic functions whose traces are in ${\rm BMO}(\RR)$.
  The study of this topic has been widely
extended to more general operators such as elliptic operators (instead of the Laplacian)
and for domains other than $\mathbb R^n$ such as Lipschitz domains. See for examples \cite{DKP, FN1, FN,  HMM}.

%The purpose of this paper is to extend  the  above analogous characterization,
% by means of certain Carleson measures, for all $\L$-harmonic functions
%associate to Schr\"odinger operator which have boundary values with BMO data.

The main aim of this article is to study a similar characterization to \eqref{e1.1}  for the Schr\"odinger operators
with appropriate conditions on its potentials.
Let us consider  the Schr\"odinger  operator
\begin{equation}\label{e1.2}
\L  =-\Delta  +V(x) \ \ \ {\rm on} \ \ L^2(\RR), \ \ \ \  n\geq 3.
\end{equation}
%where $V(x)$ is a nonnegative potential.
As to the nonnegative  potential $V$, we   assume that it is not identically zero and that
 $
V\in B_q$  for  some $q\geq n/2$,
which by definition means that   $V\in L^{q}_{\rm loc}(\RR), V\geq 0$, and
there exists a  constant $C>0$ such that   the reverse H\"older inequality
\begin{equation}\label{e1.3}
\left(\frac{1}{\abs{B}}\int_BV(y)^q~dy\right)^{1/q}\leq\frac{C}{\abs{B}}\int_BV(y)~dy,
\end{equation}
holds for all   balls $B$ in $\RR.$

The operator   $\L$ is a self-adjoint
operator on $L^2(\RR)$. Hence $\L$ generates the $\L$-Poisson semigroup $\{e^{-t\sqrt{\L}}\}_{t>0}$ on $L^2(\RR)$.
%In the case when $\L=-\Delta$, it provides the usual Poisson extension, or Poisson integral, of $f$ in ${\mathbb R}^{n+1}_+.$
 % Let us recall that Bochner's subordination formula gives a way to express $u$ as a mean of the solution of the $\L$-diffusion equation, see  \eqref{eeee}.
Since the potential $V$ is nonnegative,  the semigroup kernels ${\mathcal P}_t(x,y)$  of the operators $e^{-t\sqrt{\L}}$ satisfy
\begin{eqnarray*}
0\leq {\mathcal P}_t(x,y)\leq p_t(x-y)
\end{eqnarray*}
for all $x,y\in\RR$ and $t>0$, where
$$
p_t(x-y)=c_n {t\over (t^2 +|x|^2)^{{n+1\over 2}}}, \ \ \ \ c_n={\Gamma\Big({n+1\over 2}\Big) \over \pi^{(n+1)/2}}
$$
is the kernel of the classical Poisson semigroup
$\set{{  P}_t}_{t>0}=\{e^{-t\sqrt{-\Delta}}\}_{t>0}$ on $\Real^n$.
%The family of operators $\{e^{-t\sqrt{\L}}\}_{t>0}$ is the Poisson semigroup of $\L$.
%In the case when $\L=-\Delta$, it provides the usual Poisson extension, or Poisson integral, of $f$ in ${\mathbb R}^{n+1}_+.$
For $f\in  L^p(\RR)$, $1\leq p<  \infty,$
it is well known that the Poisson extension $u(x,t)=e^{-t\sqrt{\L}}f(x), t>0, x\in\RR$, is a solution to the equation
\begin{eqnarray}\label{el.4}
{\mathbb L}u=-u_{tt}+{\L} u =0\ \ \ {\rm in }\ {\mathbb R}^{n+1}_+
\end{eqnarray}
with the boundary data $f$ on $\RR$
 (see Remark~\ref{re3.3} for $p=\infty$ below).
 The equation  ${\mathbb L}u =0$ is understood in the weak sense, that is,
  $u\in {W}^{1, 2}_{{\rm loc}} ( {\mathbb R}^{n+1}_+) $ is a weak solution of ${\mathbb L}u =0$   if it satisfies
$$\int_{{\mathbb R}^{n+1}_+} {\nabla}u\cdot {\nabla}\psi \,dY+
 \int_{{\mathbb R}^{n+1}_+} V u\psi \,dY=0,\ \ \ \ \forall \psi\in C_0^{1}({\mathbb R}^{n+1}_+).
 $$
 In the sequel,   we call such a function $u$  an ${\mathbb L}$-harmonic function associated to the operator ${\mathbb L}$.

As mentioned above,  we  are interested in  deriving the characterization of the solution
 to the equation $ {\mathbb L}u  =0$ in ${\mathbb R}^{n+1}_+$ having boundary values with BMO data.
Following \cite{DGMTZ},   a locally integrable function $f$ belongs to  BMO$_{{\mathcal{L}}}({\mathbb R}^n)$
whenever there is constant $C\geq 0$ so that
\begin{equation} \label{e1.5}
  {1\over |B|}\int_{B}|f(y)- f_{B}|dy\leq C
  \end{equation}
  for every ball $B=B(x, r)$, and
\begin{equation}\label{e1.6}
      {1\over |B|}\int_{B}|f(y)|dy\leq C
\end{equation}
 for every  ball   $B=B(x,r)$ with  $r\geq\rho(x)$.   Here $f_B=|B|^{-1}\int_B f(x) dx$ and
the critical radii above are determined by the function $\rho(x; V)=\rho(x)$ which takes the explicit form
\begin{equation}\label{e1.7}
 \rho(x)=\sup \Big{\{} r>0: \ {1\over r^{n-2}} \int_{B(x, r)} V(y)dy \leq 1 \Big{\}}.
\end{equation}
We define $\|f\|_{{\rm BMO}_{\mathcal{L}}(\RR)}$ to be the smallest
 $C$  in the right hand sides of \eqref{e1.5} and \eqref{e1.6}.
Because of \eqref{e1.6}, this ${\rm BMO}_{{\mathcal{L}}}(\RR)$ space is in fact a proper subspace of  the classical BMO space of John and Nirenberg,
 and it  turns out to be a suitable space in studying the case of the end-point estimates for $p=\infty$ concerning  the boundedness of
 some classical operators associated to $\L$
 such as the Littlewood-Paley square functions, fractional integrals   and Riesz transforms
 %related to Schr\"odinger operators
   (see \cite{BHS2008, BHS2009, DY1, DY2, DGMTZ, HLMMY, MSTZ}).

 The following theorem is the main result of this article.

 \begin{thm}\label{th1.1}
 Suppose $V\in B_q$ for some $q\geq n.$
We denote by ${\rm HMO_\L}(\Real_+^{n+1})$  the class of all $C^1$-functions $u(x,t)$
of the solution of ${\mathbb L}u=0$  in $\Real_+^{n+1} $ such that
%the class of all ${\mathbb L}$-harmonic functions $u(x,t)\in C^1({\mathbb R}^{n+1}_+)$  such that
\begin{eqnarray}\label{e1.8}
\|u\|^2_{{\rm HMO_\L}(\Real_+^{n+1})}&=& \sup_{x_B, r_B}     r_B^{-n} \int_0^{r_B}\int_{B(x_B, r_B)} t | \nabla u(x,t) |^2
{dx dt }  <\infty,
%\iffalse+ \sup_B \left\{ {1\over |B|} \int_0^{r_B}\int_B |t^2\partial_{tt}  u(x,t)|^2 {dx dt\over t} \right\}^{1/2}\fi
\end{eqnarray}
where $\nabla=(\nabla_x, \partial_t).$ Then we have
 \begin{itemize}
\item[(1)]  If $u\in {\rm HMO_\L}(\Real_+^{n+1})$, then    there exists some $f\in {\rm BMO_\L}(\Real^{n})$ such that $u=e^{-t\sqrt \L}f$,
and
$$
\|f\|_{{\rm BMO_\L}(\RR)}\leq C\|u\|_{{\rm HMO_\L}(\Real_+^{n+1})}
$$
with some constant $C>0$ independent of $u$ and $f$.

\item[(2)]  If $f\in {\rm BMO_\L}(\RR)$, then   the function $u=e^{-t\sqrt \L}f$
satisfies estimates \eqref{e1.8} with
 $
 \|u\|_{{\rm HMO_\L}(\Real_+^{n+1})}\approx \|f\|_{{\rm BMO_\L}(\RR)}.
 $
 \end{itemize}
\end{thm}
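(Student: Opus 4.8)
The plan is to prove the two directions of Theorem~\ref{th1.1} separately, with direction (2) providing quantitative tools that are reused in direction (1).

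For part (2), the strategy is to take $f\in{\rm BMO}_{\mathcal L}(\mathbb R^n)$, set $u=e^{-t\sqrt{\mathcal L}}f$, and verify the Carleson bound \eqref{e1.8} by a local/global splitting adapted to the critical radius $\rho$. Fix a ball $B=B(x_B,r_B)$. When $r_B\le\rho(x_B)$ (up to the usual doubling of $\rho$), one writes $f=(f-f_{2B})\chi_{2B}+(f-f_{2B})\chi_{(2B)^c}+f_{2B}$. The first piece is handled exactly as in the classical Fabes--Johnson--Neri argument using the $L^2$ boundedness of the square function $u\mapsto(\int_0^\infty t|\nabla e^{-t\sqrt{\mathcal L}}g|^2\,dt)^{1/2}$ together with the John--Nirenberg estimate $\|(f-f_{2B})\chi_{2B}\|_2\lesssim |B|^{1/2}\|f\|_{{\rm BMO}_{\mathcal L}}$; here one needs the pointwise bound $0\le\mathcal P_t(x,y)\le p_t(x-y)$ and its gradient analogue, plus standard subordination-formula estimates relating $e^{-t\sqrt{\mathcal L}}$ to the heat semigroup $e^{-s\mathcal L}$, so that the kernel of $\nabla e^{-t\sqrt{\mathcal L}}$ enjoys Gaussian-type decay. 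The tail piece $(f-f_{2B})\chi_{(2B)^c}$ is estimated by brute force from the kernel bounds: for $(x,t)\in B\times(0,r_B)$ one gets $|\nabla u(x,t)|\lesssim t\sum_{j\ge1}2^{-j}\fint_{2^{j}B}|f-f_{2B}|\lesssim t\,r_B^{-1}\|f\|_{{\rm BMO}_{\mathcal L}}$ after summing the lacunary annuli (the logarithmic growth of the averages is killed by the geometric factor), and then $r_B^{-n}\int_0^{r_B}\int_B t\cdot t^2 r_B^{-2}\,dx\,dt\lesssim\|f\|^2$. The constant piece contributes $\nabla(f_{2B}e^{-t\sqrt{\mathcal L}}1)$; this is where the Schr\"odinger structure genuinely enters, since $e^{-t\sqrt{\mathcal L}}1\not\equiv1$, and one must use $|1-e^{-t\sqrt{\mathcal L}}1(x)|\lesssim (t/\rho(x))^{\delta}$ together with $|\nabla e^{-t\sqrt{\mathcal L}}1(x)|\lesssim t^{-1}(t/\rho(x))^\delta$, valid for $V\in B_q$, $q\ge n/2$ — these are the ``perturbation'' estimates that distinguish $\mathbb L$ from the Laplacian. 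When $r_B>\rho(x_B)$ one additionally invokes \eqref{e1.6}, i.e. $\fint_B|f|\lesssim\|f\|_{{\rm BMO}_{\mathcal L}}$ on large balls, to control $f_{2B}$ itself and thereby close the estimate; combining the two regimes gives $\|u\|_{{\rm HMO}_{\mathcal L}}\lesssim\|f\|_{{\rm BMO}_{\mathcal L}}$, and the reverse inequality $\|f\|_{{\rm BMO}_{\mathcal L}}\lesssim\|u\|_{{\rm HMO}_{\mathcal L}}$ is contained in part (1) applied to this particular $u$.

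For part (1), given $u\in{\rm HMO}_{\mathcal L}(\mathbb R^{n+1}_+)$, the plan is: first show $u$ has a boundary trace. Using Caccioppoli-type inequalities for the degenerate equation $\mathbb L u=0$ (weak solutions are in $W^{1,2}_{\rm loc}$) together with \eqref{e1.8}, one obtains local $L^2$ bounds on $u(\cdot,t)$ on compact sets uniform in small $t$, enough to extract a weak-$\ast$ limit $f=\lim_{t\to0}u(\cdot,t)$ that is locally integrable. Next, one shows $u=e^{-t\sqrt{\mathcal L}}f$: this uses a Green's-identity / reproducing-formula argument for the operator $\mathbb L$ on slabs $\mathbb R^n\times(\varepsilon,N)$, exploiting that $e^{-t\sqrt{\mathcal L}}$ solves the same equation with the correct boundary behaviour, plus a uniqueness statement (the one flagged as Remark~\ref{re3.3}) for $\mathbb L$-harmonic functions with controlled growth. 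Finally one must show $f\in{\rm BMO}_{\mathcal L}$, i.e.\ both \eqref{e1.5} and \eqref{e1.6}. For \eqref{e1.5} one runs the Fabes--Johnson--Neri scheme: write $f=f_1+f_2+f_{2B}$ with $f_1=(f-f_{2B})\chi_{2B}$, and use the Carleson condition to produce, via a duality/tent-space pairing or a direct integration by parts in $t$, the bound $\fint_B|f-f_B|\lesssim\|u\|_{{\rm HMO}_{\mathcal L}}$ — the key identity being that $f-u(\cdot,r_B)$ is controlled by $\int_0^{r_B}\partial_t u\,dt$ and $u(\cdot,r_B)-f_B$ is controlled by the oscillation of $u$ on the Carleson box, both estimated by Poincar\'e plus \eqref{e1.8}. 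For \eqref{e1.6}, the extra ingredient is again the critical radius: on a ball with $r_B\ge\rho(x_B)$, one combines the oscillation bound with a decay estimate showing $|u(x_B,\rho(x_B))|\lesssim\|u\|_{{\rm HMO}_{\mathcal L}}$, which follows because $\mathbb L$-harmonic functions with the Carleson property cannot have large mean on balls of critical size — here one leans on the reverse H\"older property of $V$ through the maximal function $\int V$ over $B(x_B,\rho(x_B))$ being $\approx\rho^{n-2}$.

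The main obstacle I expect is the interplay between the critical radius $\rho$ and the Carleson geometry in both directions: proving condition \eqref{e1.6} (the distinctive ``$\mathcal L$-part'' of the ${\rm BMO}_{\mathcal L}$ norm) from the Carleson condition in part (1), and dually controlling the constant term $f_{2B}\,\nabla e^{-t\sqrt{\mathcal L}}1$ on large balls in part (2), both require the perturbation estimates $|\nabla_{x,t}e^{-t\sqrt{\mathcal L}}1(x)|\lesssim t^{-1}\min\{1,(t/\rho(x))^\delta\}$ and their integrated consequences, and these must be established from $V\in B_q$ (the hypothesis $q\ge n$ rather than merely $q\ge n/2$ is presumably needed precisely to get gradient bounds, via $W^{1,q}\hookrightarrow L^\infty$ on balls for the local solutions). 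A secondary technical point is justifying $u=e^{-t\sqrt{\mathcal L}}f$ rigorously — i.e.\ the uniqueness of $\mathbb L$-harmonic extensions in the relevant growth class — which for the Laplacian is classical but for $\mathbb L$ needs care because one only has the trivial kernel domination $\mathcal P_t\le p_t$ from above and must supplement it with lower/Hölder estimates coming from the $B_q$ condition.
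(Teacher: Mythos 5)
Your plan for Part (2) essentially matches the paper's argument: the decomposition $f=(f-f_{2B})\chi_{2B}+(f-f_{2B})\chi_{(2B)^c}+f_{2B}$, the $L^2$ square-function/Riesz-transform bound for the local piece, kernel tail estimates for the global piece, and the perturbation estimate $|t\nabla e^{-t\sqrt{\mathcal L}}1(x)|\lesssim\min\{(t/\rho(x))^\delta,(t/\rho(x))^{-N}\}$ for the constant piece (split by whether $r_B\lessgtr\rho(x_B)$). You also correctly diagnose why $q\geq n$ rather than $q\geq n/2$ is needed: the spatial gradient estimates for $\mathcal{K}_t$ and $\mathcal{P}_t$ in Lemmas~\ref{le3.8} and~\ref{le3.9} rely on Morrey embedding and the fundamental solution estimates from Shen, which require $V\in B_q$ with $q>n$.

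Your Part (1) has the right skeleton — extract a trace, prove $u=e^{-t\sqrt{\mathcal L}}f$ via a uniqueness theorem, then verify $f\in{\rm BMO}_{\mathcal L}$ — but differs from the paper in a place where your version has a real gap. The paper does \emph{not} estimate the oscillation $\fint_B|f-f_B|$ of the limiting trace directly. Instead it first proves that the regularized traces $f_k=u(\cdot,1/k)$ satisfy $\|f_k\|_{{\rm BMO}_{\mathcal L}}\lesssim\|u\|_{{\rm HMO}_{\mathcal L}}$ uniformly (Lemma~\ref{le3.5}), using the equivalent characterization $\sup_B\bigl(\fint_B|f-e^{-r_B\sqrt{\mathcal L}}f|^2\bigr)^{1/2}\lesssim 1$, the pairing identity of Lemma~\ref{le3.7}, and the off-diagonal Carleson estimate of Lemma~\ref{le3.6}; then it invokes $(H^1_{\mathcal L})^\ast={\rm BMO}_{\mathcal L}$ and Alaoglu's theorem to extract a weak-$\ast$ limit $f$ already inside ${\rm BMO}_{\mathcal L}$, and uses the membership $\partial_t^2\mathcal{P}_t(x,\cdot)\in H^1_{\mathcal L}$ to pass the representation formula to the limit. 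Your route avoids the $H^1_{\mathcal L}$ duality machinery, which is attractive, but the step you propose for \eqref{e1.6} — that ``$\mathbb L$-harmonic functions with the Carleson property cannot have large mean on balls of critical size,'' so $|u(x_B,\rho(x_B))|\lesssim\|u\|_{{\rm HMO}_{\mathcal L}}$ — does not follow. The Carleson condition controls $|\nabla u|$ and hence only the \emph{oscillation} of $u$, not its size; in the limiting case $V\equiv 0$ constants are harmonic with vanishing ${\rm HMO}$ norm and arbitrarily large mean, so any such bound must come from the equation and the $B_q$ structure, not from the Carleson geometry. In the paper this is handled implicitly by the characterization through $I-e^{-r_B\sqrt{\mathcal L}}$ (which degenerates on constants only when $V\equiv0$), and that is the ingredient you would need to replace your speculative decay claim with. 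A secondary soft spot is your ``Green's identity on slabs'' argument for $u=e^{-t\sqrt{\mathcal L}}f$: the paper instead establishes $u(x,t+1/k)=e^{-t\sqrt{\mathcal L}}f_k(x)$ by appealing to the Liouville-type uniqueness Lemma~\ref{le2.7} (built on Shen's fundamental-solution bounds), applies $\mathcal L$ to both sides, passes to the limit using the $H^1_{\mathcal L}$ bound \eqref{e2.8}, and closes with Lemma~\ref{le2.7} again; a direct Green's identity argument would have to contend with the potential term and the unbounded slab, and would still require such a uniqueness statement, so that piece of your plan is less a different route than an underspecified one.
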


We should mention that for the Schr\"odinger operator $\L$ in \eqref{e1.2},
an important property of the $B_q$ class, proved in \cite[Lemma 3]{Ge}, assures that the condition $V\in B_q$ also implies $V\in B_{q+\epsilon}$
for some $\epsilon>0$
 and that the $B_{q+\epsilon}$ constant of $V$ is controlled in terms of the one of $B_q$ membership. This in particular implies
 $V\in L^q_{\rm loc}(\RR)$ for some $q$ strictly greater than $n/2.$ However,  in general the potential $V$ can be unbounded and does not
 belong to $L^p(\RR)$ for any $1 \le p \le \infty .$  As a model example, we could take $V(x)=|x|^2$.
 Moreover, as
  noted in \cite{Shen}, if $V$ is any nonnegative
 polynomial, then $V$ satisfies the stronger condition
 \begin{eqnarray*}
\max_{x\in B} V(x)\leq\frac{C}{\abs{B}}\int_BV(y)~dy,
\end{eqnarray*}
which implies $V\in B_q$ for every $q\in (1, \infty)$ with a uniform constant.

This article is organized as follows. In Section 2, we recall some preliminary results including
the  kernel estimates of the heat   and Poisson semigroups of $\L$,
  the $H^1_{\L}(\RR)$ and ${\rm BMO}_{\L}(\RR)$ spaces associated to the Schr\"odinger operators
and certain properties of ${\mathbb L}$-harmonic functions.
In  Section 3, we will prove our main result,   Theorem~\ref{th1.1}. The proof of part  (1) follows a similar method
to that   of \cite{FJN}, which
depends  heavily on  three non-trivial results:  Alaoglu's Theorem on the weak-$\ast$ compactness
of the unit sphere in the dual of a Banach space,   the duality theorem asserting that the space ${\rm BMO}_{{\mathcal{L}}}(\RR)$ is the dual
space of  $ H^1_{\L}(\RR)$,
 and
some specific properties of the Hardy space $H^1_{\L}(\RR)$ and  the Carleson measure.
For   part (2), we will prove it by  making  use of the full gradient estimates
on the kernel of the Poisson semigroup in the $(x,t)$ variables under the assumption on $V\in B_q$ for some $q\geq n$.
This  improves previously known results (see  \cite{DY2, DGMTZ, HLMMY, MSTZ}) which
characterize   the space ${\rm BMO}_{{\mathcal{L}}}(\RR)$   in terms of Carleson measure
which are only related to   the  gradient in the $t$ variable.
In Section 4,   we will extend the method    for the space ${\rm BMO}_{\L}(\RR)$ in Section 3
 to obtain some generalizations to Lipschitz-type spaces ${\Lambda}_{\L}^{\alpha}(\RR)$
for $\alpha\in (0, 1)$.
 %Finally,  we  derive endpoint solvability results for the equation $ {\mathbb L}u  =0$ in ${\mathbb R}^{n+1}_+$
%having boundary values with data in  BMO or in Lipschitz-type spaces.

% In our theorem, we obtain a characterization of
 % ${\rm BMO}_{{\mathcal{L}}}$ in terms of Carleson measures
%related to   the (total) gradient in the $(x,t)$ variable  by

Throughout the article, the letters ``$c$ " and ``$C$ " will  denote (possibly different) constants
that are independent of the essential variables.

\vskip 1cm

\section{Basic properties of the heat  and Poisson semigroups of Schr\"odinger operators}
%Anterior results on heat  and Poisson semigroups of Schr\"odinger operators }
\setcounter{equation}{0}

%%%%%%%%%%%%%%%%%%%%%%%%%%%%%%%%%%%%%%%%%%%%%%%%%%%%%%
%Throughout this section we will assume that $V\in B_q$ for some $n/2.$

%\subsection{Some estimate on semigroups}
In this section, we begin by recalling some basic properties of the critical radii function $\rho(x)$ under the assumption \eqref{e1.3} on $V$
(see Section 2, \cite{DGMTZ}).

\begin{lem}\label{le2.1} Suppose $V\in B_q$ for some $q> n/2.$
There exist $C>0$ and $k_0\geq1$ such that for all $x,y\in\Real^n$
\begin{equation}\label{e2.1}
C^{-1}\rho(x)\left(1+\frac{\abs{x-y}}{\rho(x)}\right)^{-k_0}\leq\rho(y)\leq C\rho(x)
\left(1+\frac{\abs{x-y}}{\rho(x)}\right)^{\frac{k_0}{k_0+1}}.
\end{equation}
In particular, $\rho(x)\sim \rho(y)$ when $y\in B(x, r)$ and $r\leq c\rho(x)$.
\end{lem}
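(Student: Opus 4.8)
The plan is to reduce everything to elementary estimates on the auxiliary quantity
$$
\psi(x,r):=\frac{1}{r^{n-2}}\int_{B(x,r)}V(y)\,dy ,
$$
in terms of which $\rho(x)=\sup\{r>0:\psi(x,r)\le1\}$ by \eqref{e1.7}. First I would record the scaling behaviour of $\psi$. The map $r\mapsto\psi(x,r)$ is continuous on $(0,\infty)$, and by H\"older's inequality together with $V\in L^q_{\rm loc}$, $q>n/2$, one has $\psi(x,r)\le C r^{2-n/q}\to0$ as $r\to0^{+}$. Combining H\"older's inequality with the reverse H\"older condition \eqref{e1.3} gives, for $0<r_1\le r_2$,
$$
\psi(x,r_1)\le C_0\Big(\tfrac{r_1}{r_2}\Big)^{\beta}\psi(x,r_2),\qquad \beta:=2-\tfrac nq>0 ,
$$
while the fact that $B_q$ weights are doubling makes $V\,dy$ a doubling measure on $\RR$, which yields the complementary bound $\psi(x,r_2)\le C_1(r_2/r_1)^{\gamma}\psi(x,r_1)$ for $r_1\le r_2$ and some $\gamma>0$; here $\gamma>0$ is forced, since otherwise $\psi(x,\cdot)$ would be bounded, contradicting $\psi(x,r)\to\infty$ as $r\to\infty$ (the latter coming from the first displayed inequality above and $V\not\equiv0$). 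From these facts one deduces $\rho(x)\in(0,\infty)$, that $\psi(x,\rho(x))=1$ and $\psi(x,r)>1$ for every $r>\rho(x)$, and that $\psi(x,r)\le C_0(r/\rho(x))^{\beta}$ for $r\le\rho(x)$, together with $C_0^{-1}(r/\rho(x))^{\beta}\le\psi(x,r)\le C_1(r/\rho(x))^{\gamma}$ for $r\ge\rho(x)$.

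The core of the proof is the right-hand inequality in \eqref{e2.1}, an upper bound for $\rho(y)$; I would establish it for all $x,y$, with exponent $\theta:=\max\{1-\beta/\gamma,\ 1/2\}<1$, which then fixes $k_0:=\theta/(1-\theta)\ge1$. Writing $R=\rho(x)$, $s=\rho(y)$, $t=\abs{x-y}$, and using only inclusions of balls $B(x,\cdot)\subset B(y,\cdot)$ (and conversely) together with the scaling estimates listed above, I would split into cases. If $t\le R$, the inclusions $B(x,(K-1)R)\subset B(y,KR)$ for $K\ge2$ and the lower growth bound for $\psi(x,\cdot)$ force $\psi(y,r)>1$ for all $r\ge K_0R$ with $K_0=K_0(n,q,V)$, whence $\rho(y)\le K_0R$. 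If $t\ge R$ and $s\le2t$, then $B(x,t)\subset B(y,2t)$, the lower bound $\psi(x,t)\gtrsim(t/R)^{\beta}$, and the doubling upper bound $\psi(y,2t)\lesssim(2t/s)^{\gamma}$ combine to give $s\lesssim R(t/R)^{1-\beta/\gamma}$. If $t\ge R$ and $s>2t$, then $B(x,s/2)\subset B(y,s)$ together with $\int_{B(y,s)}V=s^{n-2}$ and $\psi(x,s/2)\gtrsim(s/2R)^{\beta}$ force $s\lesssim R$. In every case $\rho(y)\le CR(1+t/R)^{\theta}$, which is the right-hand inequality of \eqref{e2.1}.

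The left-hand inequality then follows with no further analysis. Applying the right-hand inequality with the roles of $x$ and $y$ interchanged gives $\rho(x)\le C\rho(y)\big(1+\abs{x-y}/\rho(y)\big)^{\theta}$; solving this for $\rho(y)$, distinguishing $\rho(y)\ge\abs{x-y}$ from $\rho(y)<\abs{x-y}$ and using $\theta/(1-\theta)=k_0$ and $1/(1-\theta)=k_0+1$, yields $\rho(y)\ge C^{-1}\rho(x)\big(1+\abs{x-y}/\rho(x)\big)^{-k_0}$. Finally, the closing assertion of the lemma, $\rho(x)\sim\rho(y)$ for $y\in B(x,r)$ with $r\le c\rho(x)$, is immediate from \eqref{e2.1}, since then $1+\abs{x-y}/\rho(x)$ is bounded above and below.

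The main technical obstacle is the case analysis in the second step: one must handle the two distinct scaling exponents $\beta$ (from the reverse H\"older inequality) and $\gamma$ (from the doubling of $V\,dy$) simultaneously, and one must be careful that $\rho(x)$ is the supremum of the set $\{r>0:\psi(x,r)\le1\}$, which need not be an interval. Thus $\psi(x,r)\le1$ is genuinely weaker than $r\le\rho(x)$; only the implication $r>\rho(x)\Rightarrow\psi(x,r)>1$ and the identity $\psi(x,\rho(x))=1$ are available, and every estimate must be arranged to use these rather than any spurious monotonicity of $\psi(x,\cdot)$.
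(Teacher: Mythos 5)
The paper does not prove Lemma~\ref{le2.1}; it recalls it from \cite{DGMTZ}, where it is in turn taken from Shen's foundational paper \cite{Shen} (Lemma 1.4 there), so there is no in-paper proof to compare against. Your argument is correct and reconstructs essentially Shen's original proof: you introduce $\psi(x,r)=r^{2-n}\int_{B(x,r)}V$, derive the two power-type comparison estimates for $\psi(x,r_1)$ versus $\psi(x,r_2)$ (the lower one from H\"older plus the reverse H\"older inequality, the upper one from the doubling of $V\,dy$, which, as in \cite{Shen}, is the one nontrivial imported fact), deduce $\psi(x,\rho(x))=1$ and the resulting two-sided power bounds for $\psi(x,\cdot)$ around $\rho(x)$, and then run the ball-inclusion case analysis to get the upper estimate $\rho(y)\le C\rho(x)(1+|x-y|/\rho(x))^{\theta}$ with $\theta\in[1/2,1)$, from which the lower bound follows by symmetry and elementary manipulation of the exponents $k_0=\theta/(1-\theta)$, $k_0+1=1/(1-\theta)$. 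Your remark that $\{r:\psi(x,r)\le1\}$ need not be an interval, so one should rely only on $\psi(x,\rho(x))=1$ and $\psi(x,r)>1$ for $r>\rho(x)$ rather than on any monotonicity of $\psi(x,\cdot)$, is a correct and useful caution; the only point you lean on without proof is the doubling of $V\,dy$ under $V\in B_q$, which is exactly the standard auxiliary fact cited in \cite{Shen}.
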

It follows from Lemmas 1.2 and 1.8 in \cite{Shen}  that there is a constant $C_0$ such that
for a nonnegative Schwartz class function $\varphi$ there exists a constant $C$ such that
\begin{equation}\label{e2.2}
\int_{\RR} \varphi_t(x-y)V(y)dy\leq
\left\{
\begin{array}{lll}
Ct^{-1}\big({\sqrt{t}\over \rho(x)}\big)^{\delta}\ \ \ &{\rm for}\ t\leq \rho(x)^2,\\[6pt]
C\big({\sqrt{t}\over \rho(x)}\big)^{C_0+2-n}\ \ \ &{\rm for}\ t> \rho(x)^2,
\end{array}
\right.
\end{equation}
where $\varphi_t(x)=t^{-n/2}\varphi(x/\sqrt{t}),$   and
$
\delta =2-\frac{n}{q}>0.
$

Let $\{e^{-t\L}\}_{t>0}$ be the heat  semigroup associated to $\L$:
\begin{eqnarray}\label{e2.3}
 e^{-t\L}f(x)=\int_{\Real^n}{\mathcal K}_t(x,y)f(y)~dy,\qquad f\in L^2(\Real^n),~x\in\Real^n,~t>0.
\end{eqnarray}
From the Feynman-Kac formula, it is well-known that the    kernel ${\mathcal K}_t(x,y)$
of the semigroup
$e^{-t\L}$  satisfies the estimate
\begin{eqnarray}
0\leq {\mathcal K}_t(x,y)\leq h_t(x-y)
\label{e2.4}
\end{eqnarray}
for all $x,y\in\RR$ and $t>0$, where
\begin{equation}\label{e2.5}
h_t(x-y)=\frac{1}{(4\pi t)^{n/2}}~e^{-\frac{\abs{x-y}^2}{4t}}
\end{equation}
is the kernel of the classical heat semigroup
$\set{T_t}_{t>0}=\{e^{t\Delta}\}_{t>0}$ on $\Real^n$.
This estimate can be improved in time when $V\not\equiv 0$ satisfies the reverse H\"older
condition $B_q$ for some $q> n/2$.
 The function $\rho(x)$ arises naturally in this context.

\begin{lem}[see \cite{DGMTZ}] \label{le2.2} Suppose $V\in B_q$ for some $q> n/2.$
For every $N>0$ there exist the constants $C_N$ and $c$ such that for $ x,y\in\Real^n, t >0$,
 \begin{itemize}
\item[(i)]
%For every $N>0$ there exists a constant $C_N$ such that
\begin{equation*}
0\leq {\mathcal K}_t(x,y)\leq C_Nt^{-n/2}e^{-\frac{\abs{x-y}^2}{ct}}\left(1+\frac{\sqrt{t}}{\rho(x)}
+\frac{\sqrt{t}}{\rho(y)}\right)^{-N} \ {\rm and}
\end{equation*}

\item[(ii)] %There exists a constant $c>0$ such that for every $N$,
\begin{equation*}
 \abs{\partial_t{\mathcal K}_t(x,y)}\leq C_Nt^{-\frac{n+2}{2}}e^{- \frac{\abs{x-y}^2}{ct}}
 \left(1+\frac{\sqrt t}{\rho(x)}+\frac{\sqrt t}{\rho(y)}\right)^{-N} .
\end{equation*}
 \end{itemize}
\end{lem}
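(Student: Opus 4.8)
These estimates are established in \cite{DGMTZ}; I outline the argument. The starting point is the Duhamel (Feynman-Kac perturbation) identity
$$h_t(x-y)-{\mathcal K}_t(x,y)=\int_0^t\!\!\int_{\RR} h_{t-s}(x-z)\,V(z)\,{\mathcal K}_s(z,y)\,dz\,ds,$$
whose integrand is nonnegative, in particular re-deriving the domination \eqref{e2.4}. Integrating in $y$ and using $0\le {\mathcal K}_s\le h_s$ (so that $e^{-s\L}1\le 1$) together with $\int h_{t-s}=1$ gives a bound on the lost mass,
$$1-e^{-t\L}1(x)\le\int_0^t\!\!\int_{\RR}h_{t-s}(x-z)V(z)\,dz\,ds .$$
Feeding in \eqref{e2.2} with $\varphi$ a Gaussian, for $t\le\rho(x)^2$ this is $\le C\int_0^t(t-s)^{-1}\big(\sqrt{t-s}/\rho(x)\big)^{\delta}\,ds\le C(\sqrt t/\rho(x))^{\delta}$, the $s$-integral converging because $\delta>0$. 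Iterating this gain over a chain of time intervals of length $\sim\rho(\cdot)^2$ and using that $\rho$ is slowly varying (Lemma~\ref{le2.1}), one upgrades it to the \emph{mass-decay estimate} $e^{-t\L}1(x)\le C_N(1+\sqrt t/\rho(x))^{-N}$ for every $N>0$.

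To obtain the pointwise bound (i) I would combine the mass-decay estimate with the Gaussian domination via the semigroup identity ${\mathcal K}_t(x,y)=\int_{\RR}{\mathcal K}_{t/2}(x,z){\mathcal K}_{t/2}(z,y)\,dz$. Since $|x-z|+|z-y|\ge|x-y|$, at least one of the two factors carries a Gaussian of size $Ct^{-n/2}e^{-c|x-y|^2/t}$; bounding that factor in this way and integrating the other factor (whose $z$-integral is $e^{-(t/2)\L}1(x)$ or $e^{-(t/2)\L}1(y)$) yields
$${\mathcal K}_t(x,y)\le Ct^{-n/2}e^{-c|x-y|^2/t}\Big[(1+\sqrt t/\rho(x))^{-N}+(1+\sqrt t/\rho(y))^{-N}\Big].$$
Finally, the two-sided control of $\rho(x)$ against $\rho(y)$ in \eqref{e2.1}, together with the elementary fact that the Gaussian absorbs any polynomial factor in $|x-y|/\sqrt t$ at the cost of slightly shrinking $c$, lets one replace the bracket by $(1+\sqrt t/\rho(x)+\sqrt t/\rho(y))^{-N}$; choosing $N$ large enough in the previous step then gives (i) for the prescribed $N$.

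For the time-derivative bound (ii) I would first record the crude estimate $|\partial_t{\mathcal K}_t(x,y)|\le Ct^{-(n+2)/2}e^{-c|x-y|^2/t}$ without the $\rho$-factor: it follows from the analyticity of the self-adjoint semigroup $\{e^{-t\L}\}$ on $L^2$ (so $\|\L e^{-t\L}\|_{2\to 2}\le C/t$) combined with the Gaussian off-diagonal bounds implied by \eqref{e2.4}, in the standard way. Differentiating the identity ${\mathcal K}_t(x,y)=\int_{\RR}{\mathcal K}_{t/2}(x,z){\mathcal K}_{t/2}(z,y)\,dz$ in $t$ gives $\partial_t{\mathcal K}_t(x,y)=\int_{\RR}{\mathcal K}_{t/2}(x,z)\,(\partial_s{\mathcal K}_s)(z,y)\big|_{s=t/2}\,dz$, and now applying part (i) to ${\mathcal K}_{t/2}(x,z)$ and the crude bound to the time derivative, then splitting the $z$-integral according to whether $|z-x|\le|x-y|/2$ exactly as above, yields (ii) (once more passing from a one-sided $\rho(x)$-bound to the symmetric one via \eqref{e2.1}).

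The step I expect to require the most care is the iteration that turns the single power gain $(\sqrt t/\rho(x))^{\delta}$ coming from \eqref{e2.2} into arbitrary polynomial decay $(1+\sqrt t/\rho(x))^{-N}$: one must propagate the estimate across a sequence of time scales while tracking the controlled variation of $\rho$ in space, and it is precisely here that the sharp two-sided inequality \eqref{e2.1} of Lemma~\ref{le2.1} is used essentially. The later symmetrizations between $\rho(x)$ and $\rho(y)$ and the absorption of polynomial factors into Gaussians are routine, but must be carried out with attention in the regime where $\sqrt t$ is large compared with $\rho(x)$ while $|x-y|$ is only moderate.
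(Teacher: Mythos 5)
The paper itself offers no proof of Lemma~\ref{le2.2} --- it simply cites \cite{DGMTZ} --- so what needs to be assessed is whether your outline is a valid stand-alone argument. The overall architecture is reasonable: derive a pointwise mass-decay estimate for $e^{-t\L}1$, transfer it to the kernel via the Chapman--Kolmogorov identity and Gaussian domination, symmetrize in $\rho(x),\rho(y)$ via Lemma~\ref{le2.1} and absorption of polynomial factors into the Gaussian, and handle $\partial_t {\mathcal K}_t$ by combining an analyticity-based crude bound with part (i). That part of the plan would work, modulo a small slip in the time-derivative step: differentiating $\int {\mathcal K}_{t/2}(x,z){\mathcal K}_{t/2}(z,y)\,dz$ in $t$ produces \emph{two} terms (the derivative hits each factor), not one; both are estimated the same way, so this is cosmetic.

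The genuine gap is at the very first, and most essential, step --- the mass-decay estimate $e^{-t\L}1(x)\le C_N(1+\sqrt t/\rho(x))^{-N}$. You derive the inequality
$$1-e^{-t\L}1(x)\le\int_0^t\!\!\int_{\RR}h_{t-s}(x-z)V(z)\,dz\,ds\le C\Bigl(\tfrac{\sqrt t}{\rho(x)}\Bigr)^{\delta}\quad (t\le\rho(x)^2)$$
and propose to ``iterate this gain'' over time blocks of length $\sim\rho(\cdot)^2$. But this estimate bounds $1-e^{-t\L}1$ from \emph{above}, i.e.\ it bounds $e^{-t\L}1$ from \emph{below}; it merely says the mass cannot decay too fast. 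It gives no information toward the desired upper bound $e^{-t\L}1(x)<1$, and no amount of iterating an upper bound on $1-e^{-t\L}1$ can convert it into a lower bound. The actual decay must come from the complementary direction: one needs a \emph{lower} bound
$$1-e^{-c_0\rho(x)^2\L}1(x)\ge\varepsilon_0>0\quad\text{uniformly in }x,$$
which in turn rests on the \emph{lower} bound on the potential built into the definition of $\rho$, namely $\rho(x)^{2-n}\int_{B(x,\rho(x))}V\sim 1$, together with the near-diagonal lower bound $e^{-s\L}1(z)\ge c>0$ for $s\lesssim\rho(z)^2$ (which your $\delta$-estimate does give). Only once this uniform mass loss per $\rho(\cdot)^2$-block is in hand can the Chapman--Kolmogorov iteration, tracking $\rho$'s slow variation via Lemma~\ref{le2.1}, produce the geometric, hence arbitrary-polynomial, decay. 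Your write-up invokes only \eqref{e2.2}, which is a one-sided (upper) bound on $\int\varphi_t V$ and therefore cannot supply this ingredient; that is the missing idea. The downstream steps (semigroup splitting, the inequality $1+|x-y|/\rho(x)\le(1+|x-y|/\sqrt t)(1+\sqrt t/\rho(x))$ to symmetrize, absorption of $(1+|x-y|/\sqrt t)^{k_0}$ into the Gaussian) are fine and match the standard route.
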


Kato-Trotter formula (see for instance   \cite{DZ2002}) asserts that
\begin{align*}
h_t(x-y)- {\mathcal K}_t(x,y)
=  \int_0^t\int_{\Real^n} h_s(x-z)V(z){\mathcal K}_{t-s}(z, y)dzds.
\end{align*}
Then we have the following result.

\begin{lem}[see \cite{DZ2002}]\label{le2.3} Suppose $V\in B_q$ for some $q> n/2.$
There exists a nonnegative Schwartz  function $\varphi$ on
$\Real^n$ such that
\begin{equation*}
\abs{h_t(x-y)-{\mathcal K}_t(x,y)}\leq\left(\frac{\sqrt{t}}{\rho(x)}\right)^\delta\varphi_t(x-y),\quad x,y\in\Real^n,~t>0,
\end{equation*}
where $\varphi_t(x)=t^{-n/2}\varphi\left(x/\sqrt{t}\right)$ and
$
\delta =2-\frac{n}{q}>0.
$
\end{lem}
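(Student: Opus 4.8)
The plan is to derive the estimate from the Kato--Trotter perturbation formula recorded just above, together with the pointwise domination $0\le\mathcal K_{t-s}(z,y)\le h_{t-s}(z-y)$ from \eqref{e2.4} and the subordination bound \eqref{e2.2}. Since $V\ge0$, that formula first shows $h_t(x-y)-\mathcal K_t(x,y)\ge0$, so $|h_t(x-y)-\mathcal K_t(x,y)|=h_t(x-y)-\mathcal K_t(x,y)$ and
\[
0\ \le\ h_t(x-y)-\mathcal K_t(x,y)\ \le\ \int_0^t\!\!\int_{\RR} h_s(x-z)\,V(z)\,h_{t-s}(z-y)\,dz\,ds .
\]
I would then split into the regimes $\sqrt t>\rho(x)$ and $\sqrt t\le\rho(x)$. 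In the first one the claimed factor $(\sqrt t/\rho(x))^\delta$ exceeds $1$, so the bound is immediate from $0\le h_t(x-y)-\mathcal K_t(x,y)\le h_t(x-y)=(4\pi t)^{-n/2}e^{-|x-y|^2/(4t)}$, which is dominated by $\varphi_t(x-y)$ as soon as $\varphi$ majorizes a fixed Gaussian; so the substance lies in the regime $t\le\rho(x)^2$.

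For that regime I would first peel off the Gaussian in $x-y$: using $\tfrac{1}{4s}\ge\tfrac{1}{8s}+\tfrac{1}{8t}$, $\tfrac{1}{4(t-s)}\ge\tfrac{1}{8(t-s)}+\tfrac{1}{8t}$ (valid for $0<s<t$) and $|x-z|^2+|z-y|^2\ge\tfrac12|x-y|^2$, one obtains the pointwise bound $h_s(x-z)h_{t-s}(z-y)\le C\,e^{-|x-y|^2/(16t)}\,\psi_s(x-z)\,\psi_{t-s}(z-y)$, where $\psi(w)=e^{-|w|^2/8}$ is a fixed nonnegative Schwartz function and $\psi_\sigma(w)=\sigma^{-n/2}\psi(w/\sqrt\sigma)$. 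Substituting and splitting the $s$-integral at $t/2$ leaves $I_1=\int_0^{t/2}\!\int\psi_s(x-z)V(z)\psi_{t-s}(z-y)\,dz\,ds$ and $I_2=\int_{t/2}^{t}\!\int\psi_s(x-z)V(z)\psi_{t-s}(z-y)\,dz\,ds$. In $I_1$ one has $t-s\ge t/2$, so $\psi_{t-s}(z-y)\le Ct^{-n/2}$, and then \eqref{e2.2} (applied with $\varphi=\psi$, legitimately since $s\le\rho(x)^2$) gives $\int\psi_s(x-z)V(z)\,dz\le Cs^{-1}(\sqrt s/\rho(x))^\delta$; as $\delta>0$ the $s$-integral converges and $I_1\le Ct^{-n/2}(\sqrt t/\rho(x))^\delta$. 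In $I_2$ one has $s\ge t/2$, so $\psi_s(x-z)\le Ct^{-n/2}$, and after the substitution $u=t-s$ one is reduced to $Ct^{-n/2}\int_0^{t/2}\big(\int\psi_u(z-y)V(z)\,dz\big)\,du$, to which \eqref{e2.2} is applied centered at $y$.

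The delicate point, and the step I expect to be the main obstacle, is that this last application produces the critical scale $\rho(y)$ rather than $\rho(x)$, and $\rho(x)/\rho(y)$ is only controlled polynomially in $|x-y|/\rho(x)$ via Lemma~\ref{le2.1}. I would set $\tau=\min(t/2,\rho(y)^2)$. On $[0,\tau]$ the first alternative in \eqref{e2.2} works verbatim and yields $C(\sqrt t/\rho(y))^\delta$; since $\rho(y)^{-1}\le C\rho(x)^{-1}(1+|x-y|/\rho(x))^{k_0}$ by Lemma~\ref{le2.1} and $\rho(x)\ge\sqrt t$ here, this upgrades to $(\sqrt t/\rho(y))^\delta\le C(\sqrt t/\rho(x))^\delta(1+|x-y|/\sqrt t)^{k_0\delta}$, the polynomial factor being absorbed by the extracted Gaussian $e^{-|x-y|^2/(16t)}$ at the cost of a slightly worse exponent. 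On $[\tau,t/2]$, which is nonempty only when $\rho(y)^2<t/2$, one instead uses the second alternative of \eqref{e2.2}, $\int\psi_u(z-y)V(z)\,dz\le C(\sqrt u/\rho(y))^{C_0+2-n}$ for $u>\rho(y)^2$; a direct computation estimates the resulting $u$-integral, and here Lemma~\ref{le2.1} is invoked once more, for the defining inequality $\rho(y)<\sqrt{t/2}\le\rho(x)/\sqrt2$ of this sub-case forces $|x-y|$ to be large relative to $\sqrt t$ (roughly $|x-y|\gtrsim\rho(x)(\rho(x)/\sqrt t)^{1/k_0}$), so that $e^{-|x-y|^2/(16t)}$ more than compensates this contribution.

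Assembling the three pieces gives, for $t\le\rho(x)^2$,
\[
0\ \le\ h_t(x-y)-\mathcal K_t(x,y)\ \le\ C\,t^{-n/2}(\sqrt t/\rho(x))^\delta\,e^{-|x-y|^2/(32t)},
\]
and I would finish by taking $\varphi(w)=C\,e^{-|w|^2/32}$: this is a single nonnegative Schwartz function, the right-hand side above equals $(\sqrt t/\rho(x))^\delta\varphi_t(x-y)$, and, being a constant multiple of a Gaussian, the same $\varphi$ also settles the regime $\sqrt t>\rho(x)$. The only genuinely technical work is the bookkeeping in $I_2$: transferring $y$-centered quantities to $x$-centered ones through Lemma~\ref{le2.1}, and, in the range $u>\rho(y)^2$, trading the weak second estimate of \eqref{e2.2} against the Gaussian decay in $|x-y|/\sqrt t$.
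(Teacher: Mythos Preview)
The paper does not supply its own proof of this lemma; it is quoted directly from \cite{DZ2002} without argument, so there is nothing in the paper itself to compare against. Your approach---Kato--Trotter, the domination $\mathcal K_{t-s}\le h_{t-s}$, peeling off a Gaussian in $|x-y|$ via the elementary splitting $\tfrac1{4s}\ge\tfrac1{8s}+\tfrac1{8t}$, then \eqref{e2.2} for the $V$-averages and Lemma~\ref{le2.1} to transfer $\rho(y)$ to $\rho(x)$ with polynomial losses absorbed by the Gaussian---is exactly the strategy of the original reference, and you have correctly identified the sub-case $u>\rho(y)^2$ in $I_2$ as the only place requiring genuine care.
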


 The Poisson semigroup associated to $\L$ can be obtained from the heat semigroup \eqref{e2.2} through Bochner's
  subordination formula (see \cite{St1970}):
\begin{align}\label{e2.6}
e^{-t\sqrt{\L}}f(x)&=\frac{1}{\sqrt{\pi}}\int_0^\infty\frac{e^{-u}}{\sqrt{u}}~e^{-{t^2\over 4u}\L}f(x)~du\nonumber\\
&=\frac{t}{2\sqrt\pi}\int_0^\infty
\frac{~e^{-{t^2\over 4s}}}{s^{3/2}}e^{-s\L}f(x) ~ds.
\end{align}

From \eqref{e2.6},  the semigroup kernels ${\mathcal P}_t(x,y)$, associated to $e^{-t\sqrt{\L}}$,
%$$
%{\mathcal P}_t(x,y)=\frac{t}{2\sqrt\pi}\int_0^\infty
%\frac{~e^{-{t^2\over 4s}}}{s^{3/2}} {\mathcal K}_s(x,y) ~ds
%$$
satisfy
the following estimates. For its proof, we refer to \cite[Proposition 3.6]{MSTZ}.

\begin{lem} \label{le2.4} Suppose $V\in B_q$ for some $q> n/2.$ For any $0<\beta<\min\{1, 2-\frac{n}{q}\}$
and every $N>0$, there exists
 a constant $C=C_{N}$ such that
 \begin{itemize}

\item[(i)]
 ${\displaystyle
| {\mathcal P}_t(x,y)|\leq C{t \over (t^2+|x-y|^2)^{n+1\over 2}} \left(1+ {(t^2+|x-y|^2)^{1/2}\over \rho(x)}
+{(t^2+|x-y|^2)^{1/2}\over \rho(y)}\right)^{-N}; }
 $

\item[(ii)] For every $m\in{\mathbb N}$,
\begin{eqnarray*}
|t^m\partial^m_t{\mathcal P}_t(x,y)|\leq C {t^m \over (t^2+|x-y|^2)^{n+m\over 2}} \left(1+ {(t^2+|x-y|^2)^{1/2}\over \rho(x)}
+{(t^2+|x-y|^2)^{1/2}\over \rho(y)}\right)^{-N};
\end{eqnarray*}

\item[(iii)]
${\displaystyle \big|  t\partial_te^{-t\sqrt{\L}}(1)(x)\big|\leq C\left({t\over \rho(x)}\right)^{\beta}\left(1+{t\over \rho(x)}\right)^{-N}.}$
 \end{itemize}
\end{lem}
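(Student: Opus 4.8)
The plan is to derive all three estimates from the subordination formula \eqref{e2.6}, using the heat kernel bounds of Lemma~\ref{le2.2}(i) and the eventual self-improvement $V\in B_{q+\epsilon}$ quoted after Theorem~\ref{th1.1}, which guarantees $\delta=2-n/q$ can be taken strictly positive. For part (i), I would start from
\begin{eqnarray*}
{\mathcal P}_t(x,y)=\frac{t}{2\sqrt\pi}\int_0^\infty \frac{e^{-t^2/4s}}{s^{3/2}}\,{\mathcal K}_s(x,y)\,ds,
\end{eqnarray*}
insert the Gaussian-with-$\rho$-decay bound for ${\mathcal K}_s(x,y)$ from Lemma~\ref{le2.2}(i), and split the $s$-integral at $s\sim t^2+|x-y|^2$. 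On each piece the elementary inequality $e^{-t^2/4s}e^{-|x-y|^2/cs}\lesssim_M (s/(t^2+|x-y|^2))^M$ for any $M>0$ lets me perform the $s$-integration explicitly; choosing $M$ large recovers the factor $t(t^2+|x-y|^2)^{-(n+1)/2}$, while the $(1+\sqrt s/\rho(x)+\sqrt s/\rho(y))^{-N}$ weight, being (essentially) monotone in $s$ and evaluated near $s\sim t^2+|x-y|^2$, produces the claimed $\rho$-decay factor after absorbing constants using Lemma~\ref{le2.1}. This is the standard subordination argument and is the template for everything else.

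For part (ii), differentiating under the integral sign in $t$ brings down powers of $t/s$ from $\partial_t(t e^{-t^2/4s})$, i.e. $t^m\partial_t^m$ applied to the kernel $e^{-t^2/4s}$ is a polynomial in $t^2/s$ of degree $m$ times $e^{-t^2/4s}$; each monomial $t^{2j}/s^{j}$ can be paired against part of the Gaussian decay exactly as above, and the $s$-integral is again evaluated by the $(s/(t^2+|x-y|^2))^M$ trick with $M$ chosen to leave $t^m(t^2+|x-y|^2)^{-(n+m)/2}$ and the same $\rho$-weight. Alternatively one can cite \cite[Proposition 3.6]{MSTZ} for (i)--(ii) as the authors do and only supply the details for (iii), which is the genuinely new ingredient. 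For (iii), I would write $t\partial_t e^{-t\sqrt\L}(1)(x)$ via subordination in terms of $\partial_s e^{-s\L}(1)$, note that $\partial_s e^{-s\L}(1)=-\L e^{-s\L}(1)=-e^{-s\L}(V)$ (since $\L 1 = V$ after the heat flow, using $-\Delta 1 =0$), and then estimate $e^{-s\L}(V)(x)=\int {\mathcal K}_s(x,y)V(y)\,dy$ by the heat bound of Lemma~\ref{le2.2}(i) together with the key pointwise integral estimate \eqref{e2.2}: the latter gives $\int h_s(x-y)V(y)\,dy\lesssim s^{-1}(\sqrt s/\rho(x))^\delta$ for $s\le\rho(x)^2$ and a favourable power for $s>\rho(x)^2$. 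Feeding this into the subordination integral and splitting at $s\sim t^2$ (comparing $t$ with $\rho(x)$) yields $(t/\rho(x))^\beta(1+t/\rho(x))^{-N}$ for any $\beta<\min\{1,\delta\}$; the exponent $\beta$ rather than $\delta$ appears because the large-$s$ regime caps the available decay at $1$, and $N$ is arbitrary because of the Gaussian/weight room in the small-$s$ part.

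The main obstacle is bookkeeping in part (iii): one must correctly handle the identity $\partial_s e^{-s\L}(1)=-e^{-s\L}V$ at the level of the (non-$L^2$) function $1$ — this is where the $B_q$ hypothesis and the decay of ${\mathcal K}_s(x,y)$ make the integrals converge — and then carefully track the interaction between the $(\sqrt s/\rho(x))^\delta$ gain from \eqref{e2.2} and the $e^{-t^2/4s}s^{-3/2}$ weight across the two regimes $t\lesssim\rho(x)$ and $t\gtrsim\rho(x)$. The perturbative alternative, writing ${\mathcal P}_t = P_t + ({\mathcal P}_t - P_t)$ and using $P_t(1)=1$ so that $t\partial_t{\mathcal P}_t(1) = t\partial_t({\mathcal P}_t-P_t)(1)$, combined with the Kato--Trotter bound of Lemma~\ref{le2.3}, gives a cleaner route to (iii) and I would present that as the primary argument, using Lemma~\ref{le2.3} to control the difference kernel and \eqref{e2.2} to integrate the $\varphi_t(x-y)$ factor against $1$.
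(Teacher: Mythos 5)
The paper does not actually prove Lemma~\ref{le2.4}; it simply refers to \cite[Proposition 3.6]{MSTZ}. Your proposal reconstructs what is, in all likelihood, the argument there, and it is the correct route: parts (i)--(ii) by inserting the weighted Gaussian heat kernel bounds of Lemma~\ref{le2.2} into the subordination formula \eqref{e2.6}, splitting the $s$--integral at $s\sim t^2+|x-y|^2$, and using the Gaussian-to-polynomial inequality $e^{-a/cs}\lesssim_M (s/a)^M$ with $M$ large; and part (iii) via either $\partial_s e^{-s\L}(1)=-e^{-s\L}(V)$ (justified by differentiating the heat kernel under the integral and using $\int\Delta_y\mathcal K_s(x,y)\,dy=0$) together with \eqref{e2.2}, or perturbatively by writing $t\partial_t e^{-t\sqrt\L}(1)=t\partial_t\big(e^{-t\sqrt\L}-P_t\big)(1)$ and invoking Lemma~\ref{le2.3}. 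Your observation that the subordination weight $s^{-3/2}$ caps the gain at $(t/\rho(x))^{1}$, so that only $\beta<\min\{1,\delta\}$ survives, is exactly the reason for the stated range of $\beta$.

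Three small inaccuracies in the write-up, none fatal. First, the self-improvement $V\in B_{q+\epsilon}$ is irrelevant to $\delta=2-n/q>0$; that positivity is immediate from $q>n/2$. Second, in part (i) the phrase about the weight being ``evaluated near $s\sim t^2+|x-y|^2$'' glosses the genuine bookkeeping: on the small-$s$ piece the heat kernel weight $(1+\sqrt s/\rho)^{-N}$ is close to $1$, so the claimed decay factor must instead be extracted by sacrificing part of the Gaussian, i.e.\ one splits $(s/(t^2+|x-y|^2))^{M'}$ into one piece used for the $s$-integration and another piece that, combined with $(\rho(x)/\sqrt s)^{N}$ on the range $\rho(x)^2<s<t^2+|x-y|^2$, is monotone in $s$ and yields $(\rho(x)/\sqrt{t^2+|x-y|^2})^N$. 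Third, in your ``primary'' perturbative route for (iii), \eqref{e2.2} is not actually used: Lemma~\ref{le2.3} already gives $|1-e^{-s\L}(1)(x)|\lesssim(\sqrt s/\rho(x))^\delta\int\varphi_s(x-y)\,dy\lesssim(\sqrt s/\rho(x))^\delta$, since integrating $\varphi_s$ against $1$ just gives a constant; \eqref{e2.2} is only needed in your first route through $e^{-s\L}(V)$. Also, in that route one must still control the range $s>\rho(x)^2$, where the bound from \eqref{e2.2} grows, and the range $t>\rho(x)$, for which it is simplest to fall back on part (ii) of the lemma (integrate $|t\partial_t\mathcal P_t(x,y)|$ in $y$) to get the factor $(1+t/\rho(x))^{-N}$. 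With these adjustments the outline is sound.
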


%\subsection{Hardy and BMO spaces associated to Schr\"odinger operators}

Recall that a  Hardy-type space associated to $\L$ was introduced by J. Dziuba\'nski et al.  in \cite{ DZ1999, DZ2002, DGMTZ},  defined  by
\begin{equation}\label{e2.7}
 H^1_{\L}(\RR)=\big\{ f\in L^1(\RR): {\mathcal P}^{\ast}f(x)= \sup_{t>0}|e^{-t\sqrt{\L}}f(x)|\in L^1(\RR) \big\}
\end{equation}
with
$$
\|f\|_{H^1_{\L}(\RR)}=\|{\mathcal P}^{\ast}f\|_{L^1(\RR)}.
$$
 For the above class of potentials,   $H^1_{\L}(\RR)$ admits an atomic characterization, where
cancellation conditions are only required for atoms with small supports.
It can be verified that for   every $m\in{\mathbb N}$, for fixed $t>0$ and  $x\in\RR,$
$\partial_t^m {\mathcal P}_t(x, \cdot)\in H^1_{\L}(\RR)$ with
 \begin{equation}\label{e2.8}
 \|\partial_t^m {\mathcal P}_t(x, \cdot)\|_{H^1_{\L}(\RR)}
 \leq Ct^{-m}.
 \end{equation}
 Indeed, from  (ii) of Lemma~\ref{le2.4}  we have that for a fixed $y\in \RR,$
 \begin{eqnarray*}
\sup_{s>0}|e^{-s\sqrt{\L}}\big(t^m\partial_t^m{\mathcal P}_t(\cdot, y)\big)(x)|&\leq& C \sup_{s>0} \left({t^m\over (t+s)^m}
{ (t+s)^m  \over (t+s+|x-y|)^{n+m }}\right) \nonumber\\
&\leq& C
{ t \over (t+ |x-y|)^{n+1 }} \in L^1(\RR, dx),
 \end{eqnarray*}
 which, in combination with the fact that $\partial_t^m {\mathcal P}_t(x, \cdot)=\partial_t^m {\mathcal P}_t(\cdot, x)$,
shows  estimate \eqref{e2.8}.

\begin{lem}\label{le2.5}
Suppose $V\in B_q$ for some $q> n/2.$ Then
 the dual space of $H^1_{\L}(\RR)$ is  ${{\rm BMO}_{\L}(\RR)}$, i.e.,
$$
(H^1_{\L}(\RR))^{\ast}={{\rm BMO}_{\L}(\RR)}.
$$
\end{lem}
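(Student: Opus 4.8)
The plan is to deduce the duality from the atomic characterization of $H^1_\L(\RR)$ recalled above, following the classical Fefferman--Stein scheme while tracking the two defining inequalities \eqref{e1.5}--\eqref{e1.6} of ${\rm BMO}_\L(\RR)$. Recall the $(1,\infty)$-atoms adapted to $\L$: a function $a$ supported in a ball $B=B(x_0,r)$ with $\|a\|_{L^\infty(\RR)}\le|B|^{-1}$, which moreover satisfies $\int_{\RR}a\,dx=0$ \emph{only} when $r<\rho(x_0)$, no cancellation being imposed when $r\ge\rho(x_0)$; every $f\in H^1_\L(\RR)$ decomposes as $f=\sum_j\lambda_j a_j$ with $\sum_j|\lambda_j|\le C\|f\|_{H^1_\L(\RR)}$, the sum converging in $H^1_\L(\RR)$ and in $L^1(\RR)$ (and the analogous $(1,2)$-atomic decomposition, with $\|a\|_{L^2}\le|B|^{-1/2}$, is available with an equivalent norm). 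The key observation is that the pair of conditions \eqref{e1.5} and \eqref{e1.6} is exactly dual to the pair \{small-ball atoms with cancellation, large-ball atoms without cancellation\}.

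\emph{Step 1: ${\rm BMO}_\L(\RR)\hookrightarrow(H^1_\L(\RR))^\ast$.} Given $g\in{\rm BMO}_\L(\RR)$, I would first bound $\big|\int_{\RR}ag\,dx\big|$ uniformly over $(1,\infty)$-atoms. If $a$ is supported in $B=B(x_0,r)$ with $r<\rho(x_0)$, the cancellation $\int a=0$ gives $\int ag=\int a(g-g_B)$, whence $|\int ag|\le\|a\|_{L^\infty}\,\|g-g_B\|_{L^1(B)}\le\|g\|_{{\rm BMO}_\L(\RR)}$ by \eqref{e1.5}; if $r\ge\rho(x_0)$, then $|\int ag|\le\|a\|_{L^\infty}\|g\|_{L^1(B)}\le C\|g\|_{{\rm BMO}_\L(\RR)}$ by \eqref{e1.6}. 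Working on the dense subspace of finite atomic sums (chosen with $\sum_j|\lambda_j|\le 2\|f\|_{H^1_\L(\RR)}$), this yields $|\int fg|\le C\|g\|_{{\rm BMO}_\L(\RR)}\|f\|_{H^1_\L(\RR)}$, so the functional $\ell_g(f)=\int fg$ extends by continuity to all of $H^1_\L(\RR)$ with $\|\ell_g\|\le C\|g\|_{{\rm BMO}_\L(\RR)}$.

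\emph{Step 2: $(H^1_\L(\RR))^\ast\hookrightarrow{\rm BMO}_\L(\RR)$.} Let $\ell\in(H^1_\L(\RR))^\ast$. Fix a ball $B=B(x_0,r)$ and let $X_B$ be the space of $f\in L^2(\RR)$ supported in $B$, with the constraint $\int f=0$ imposed iff $r<\rho(x_0)$. By the atomic estimate, $f/(|B|^{1/2}\|f\|_{L^2(B)})$ is (a fixed multiple of) a $(1,2)$-atom for $f\in X_B$, so $\ell|_{X_B}$ is bounded of norm $\le C|B|^{1/2}\|\ell\|$ in the $L^2(B)$-norm; Riesz representation produces $g^B\in L^2(B)$ with $\ell(f)=\int fg^B$ for all $f\in X_B$. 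Comparing representatives on nested balls (test against mean-zero $L^2$-functions supported in the smaller ball) shows the $g^B$ agree up to additive constants, hence patch into a single $g\in L^1_{\rm loc}(\RR)$. Then, for $r<\rho(x_0)$, $L^1$--$L^\infty$ duality on $B$ gives
\[
\frac1{|B|}\inf_{c\in\Real}\int_B|g-c|=\sup\Big\{\Big|\int_B fg\Big|:\ \|f\|_{L^\infty}\le|B|^{-1},\ \int_B f=0\Big\}\le C\|\ell\|,
\]
which is \eqref{e1.5}; for $r\ge\rho(x_0)$, dropping the mean-zero constraint,
\[
\frac1{|B|}\int_B|g|=\sup\Big\{\Big|\int_B fg\Big|:\ \|f\|_{L^\infty}\le|B|^{-1}\Big\}\le C\|\ell\|,
\]
which is \eqref{e1.6}. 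Thus $g\in{\rm BMO}_\L(\RR)$ with $\|g\|_{{\rm BMO}_\L(\RR)}\le C\|\ell\|$, and since $\ell$ and $\ell_g$ coincide on the dense subspace of finite atomic sums, $\ell=\ell_g$.

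The hard part will be the two places where the bilinear pairing $\int fg$ must be handled with care: since ${\rm BMO}_\L$-functions need not lie in the dual of $L^1$, in Step 1 one cannot simply write $\int fg$ for arbitrary $f\in H^1_\L(\RR)$ but must establish absolute convergence of $\sum_j\lambda_j\int a_jg$ and its independence of the chosen atomic decomposition (equivalently, extend from the dense subspace), and in Step 2 one must verify that the local Riesz representatives $g^B$ genuinely patch together into one global $g$ — this is where honest cancellation in small balls is used, and where Lemma~\ref{le2.1} enters to guarantee that $\rho$ is essentially constant across a ball of radius comparable to $\rho(x_0)$, so that the two atom families interface consistently. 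Once these points are settled, the remainder is a routine transcription of the classical $H^1$--BMO duality of Fefferman and Stein.
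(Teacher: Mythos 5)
The paper does not give a proof of Lemma~\ref{le2.5}; it simply cites \cite[Theorem~4]{DGMTZ}, whose argument is precisely the atomic--decomposition adaptation of the Fefferman--Stein duality that you outline. Your proposal reproduces that approach faithfully (small-ball atoms with cancellation pair against \eqref{e1.5}, large-ball atoms without cancellation pair against \eqref{e1.6}, and Riesz representation on local $L^2$-spaces plus the patching argument recovers a BMO$_\L$ representative), and the two technical points you flag --- well-definedness and decomposition-independence of the pairing $\langle f,g\rangle$, and consistency of the local representatives $g^B$ across the threshold $r\sim\rho(x_0)$ via Lemma~\ref{le2.1} --- are exactly the places where the cited reference does the real work. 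So your sketch is correct and takes essentially the same route; carrying it out rigorously would amount to transcribing DGMTZ's proof.
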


 \begin{proof} For the proof, we refer to \cite[Theorem 4]{DGMTZ}. See also \cite{DY2,  HLMMY}.
 \end{proof}

  \smallskip

 In the sequel,
we may sometimes use capital letters to denote points in
${\Bbb R}^{n+1}_+$, e.g.,
$Y=(y, t),$  and set
\begin{eqnarray*}
{ \nabla}_Y u(Y)=(\nabla_y u, \partial_tu) \ \ \ {\rm and}\ \ \
|{ \nabla}_Yu|^2=|\nabla_y u|^2 +|\partial_tu|^2.
\end{eqnarray*}
For simplicity we will denote by
$\nabla$  the full gradient $\nabla_Y$ in $\mathbb{R}^{n+1}$.
We now recall a Moser type local boundedness estimate (see for instance, \cite{HLMMY}) and
include a proof here for the sake of self-containment.

\begin{lemma}\label{le2.6} Suppose $0\leq V\in L^q_{\rm loc}(\RR)$ for some $q> n/2.$
%Let   $V\in L^q_{\rm loc}(\RR)$  be a non-negative function on $\RR$ for some $ q> n/2$.
  Let $u$ be a weak solution of ${\mathbb L}u=0$
in
the ball $B(Y_0,2r) \subset \mathbb{R}^{n+1}$.
Then for any $p\geq 1$,   there exists a constant $C=C(n, p)>0$ such that

\begin{eqnarray*}
\sup_{B(Y_0, r)}| u(Y)| \leq C\Big({1\over r^{n+1}}
\int_{B(Y_0, 2r)}| u(Y)|^pdY\Big)^{1/p}.
\end{eqnarray*}
\end{lemma}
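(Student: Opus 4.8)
The plan is to prove this Moser-type local boundedness estimate by the classical De Giorgi–Nash–Moser iteration, adapted to the degenerate-free but potential-carrying equation $\mathbb{L}u = -u_{tt} + \mathcal{L}u = -\Delta_Y u + Vu = 0$ on $\mathbb{R}^{n+1}_+$, where we write $Y=(y,t)$ and $\Delta_Y$ is the full Laplacian in $n+1$ variables. The key observation that makes things work cleanly is that $V\geq 0$: when we test the weak formulation against $\eta^2 u$ (for a cutoff $\eta$), the potential term $\int V\eta^2 u^2\,dY$ is nonnegative and can simply be dropped from the Caccioppoli estimate, so the potential never actually obstructs the iteration. The only place $V\in L^q_{\rm loc}$ with $q>n/2$ (equivalently $q>(n+1)/2$ after possibly shrinking, but $n/2$ suffices in dimension $n+1$) is needed is to control lower-order terms arising from powers $u^\beta$, via a subsolution inequality combined with Sobolev embedding; in fact, since $u^2$ is a subsolution of $-\Delta_Y w \le 0$ up to the good-signed potential term, one gets away with very little.

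First I would reduce to the case $p=2$: for $p>2$ it follows from the $p=2$ case by a standard interpolation (Young's inequality) argument once boundedness by the $L^2$ average is known, and for $1\le p<2$ it follows from the $p=2$ case by the well-known self-improving trick (write $\sup|u| \le (\sup|u|)^{1-p/2}(\sup|u|)^{p/2}$, absorb, and iterate over a sequence of shrinking balls with summable radii). So the heart of the matter is: $\sup_{B(Y_0,r)}|u| \le C(r^{-(n+1)}\int_{B(Y_0,2r)}|u|^2\,dY)^{1/2}$. For this, note $|u|$ is a weak subsolution: for any nonnegative $\psi\in C_0^1$, testing $\mathbb{L}u=0$ against $\psi\,\mathrm{sgn}(u)$ (made rigorous by approximating $|u|$ by $\sqrt{u^2+\varepsilon^2}$) gives $\int \nabla|u|\cdot\nabla\psi\,dY \le -\int V|u|\psi\,dY \le 0$. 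So $w:=|u|\ge 0$ satisfies $-\Delta_Y w \le 0$ weakly. Then Moser iteration for subsolutions of $-\Delta_Y w\le 0$ in $\mathbb{R}^{n+1}$ is completely standard: test against $\eta^2 w^{2\gamma-1}$ for $\gamma\ge 1$, use Caccioppoli to get $\int_{B(Y_0,r')}|\nabla(\eta w^\gamma)|^2 \lesssim (r''-r')^{-2}\int_{B(Y_0,r'')} w^{2\gamma}$, feed this into the Sobolev inequality on $\mathbb{R}^{n+1}$ (exponent $\kappa=\frac{n+1}{n-1}>1$), and iterate $\gamma_k=\kappa^k$ over a nested sequence of balls $r_k\downarrow r$ with $\sum(r_k-r_{k+1})$ controlled, obtaining $\sup_{B(Y_0,r)}w \le C(r^{-(n+1)}\int_{B(Y_0,2r)}w^2)^{1/2}$ with the dimensional constant absorbing the geometric series.

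The one technical point requiring care — and the step I expect to be the main obstacle to a fully rigorous write-up — is the justification that $|u|$ (or its regularization $\sqrt{u^2+\varepsilon^2}$) is an admissible weak subsolution given only $u\in W^{1,2}_{\rm loc}(\mathbb{R}^{n+1}_+)$ and $V\in L^q_{\rm loc}$: one must verify that $\eta^2 w^{2\gamma-1}$ is a legitimate test function, i.e. that the iterates $w^{\gamma_k}$ stay in $W^{1,2}_{\rm loc}$ at each stage, which is exactly what each iteration step establishes for the next, so the scheme is self-consistent; and one must check that the potential term $\int V u \cdot(\eta^2 w^{2\gamma-1})\,dY$ makes sense and has the right (favorable) sign — here $u\cdot w^{2\gamma-1}=|u|^{2\gamma}\ge 0$ pointwise and $V\ge 0$, so this integral is $\ge 0$ and gets discarded, the only subtlety being its a priori finiteness, handled by the usual truncation $w^{\gamma}\mapsto \min(w^\gamma,M)$ and monotone convergence as $M\to\infty$. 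Everything else — Caccioppoli, Sobolev, the geometric iteration, the passage between exponents $p$ — is routine and I would merely indicate it. Since the statement only claims the result for a Euclidean ball $B(Y_0,2r)\subset\mathbb{R}^{n+1}$ (not necessarily inside $\mathbb{R}^{n+1}_+$, but the equation is only given there; in applications $Y_0$ is an interior point), no boundary behavior at $t=0$ is involved and the interior theory suffices.
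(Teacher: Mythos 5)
Your proposal is correct, and it rests on the same key structural observation as the paper's proof: because $V\ge 0$, the potential term has the favorable sign and can simply be discarded, so the Schr\"odinger part of the operator never obstructs local boundedness. But the route you take from there is noticeably heavier than the paper's. The paper does not run Moser iteration at all: it tests the weak equation against $\psi=u\varphi$ (with $\varphi\in C_0^1$, $\varphi\ge 0$; the a priori finiteness of $\int V\varphi u^2\,dY$ is cited from Guitierrez), and computes directly that
\begin{equation*}
\int\nabla(u^2)\cdot\nabla\varphi\,dY \;=\; -2\int V\varphi u^2\,dY-2\int\varphi|\nabla u|^2\,dY\;\le\;0,
\end{equation*}
so $u^2$ is weakly subharmonic in the $(n+1)$-dimensional ball. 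From that point the sup bound is just the classical local-boundedness (sub-mean-value) property of nonnegative subharmonic $W^{1,2}$ functions, together with the standard reduction between $L^p$ averages. Your version instead regularizes $|u|$ by $\sqrt{u^2+\varepsilon^2}$ to produce a subsolution and then re-derives Caccioppoli $+$ Sobolev $+$ geometric iteration from scratch. That is more self-contained --- you prove rather than cite the sub-mean-value inequality --- but it is longer and introduces avoidable technicalities about test-function admissibility and truncation that the paper's simple choice $\psi=u\varphi$ sidesteps. One remark worth flagging: your parenthetical ``equivalently $q>(n+1)/2$ after possibly shrinking'' is a red herring. Since the potential term is discarded by sign, no Sobolev-type control of $V$ enters the iteration at all; the local integrability hypothesis on $V$ (at whatever exponent) is needed only to make the weak formulation and the finiteness of $\int V\varphi u^2\,dY$ legitimate, which is exactly the point the paper makes by citing Guitierrez.
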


\begin{proof} It is enough to show that $u^2$ is a subharmonic function.
Since for any
$\varphi\in C_0^1(B(Y_0,2r))$ with $\varphi\geq 0$,  we have  that
 $
\int  V\varphi u^2 dY\leq C_r\|u\|_{W^{1,2}(B(Y_0,2r))}<\infty
 $
 (see for instance,  \cite[Lemma 3.3]{Gu}).
This gives
\begin{eqnarray*}
\int
&&\hspace{-1cm}{\nabla}u^2\cdot{\nabla}\varphi\,dY\\
&=&2\int
{\nabla}u\cdot{\nabla}(u\varphi)\,dY
-2\int
\varphi|{\nabla}u|^2\,dY\\
&=&-2\int V\varphi u^2\,dY
-2\int\varphi|{\nabla}u|^2\,dY\\
&\leq& 0.
\end{eqnarray*}
\noindent The desired result follows readily.
\end{proof}

\iffalse
Let   $u\in {W}^{1, 2}_{{\rm loc}} (\RR)$ and $f\in L^2_{{\rm loc}} (\RR)$. $u$ is called  a weak solution of $(-\Delta+V)u =f$  in $\RR$  if
$$\int_{\RR} {\nabla}u\cdot {\nabla}\psi \,dx+
 \int_{\RR} V u\psi \,dx= \int_{\RR} f \psi \,dx
 $$
 for any $ \psi\in C_0^{1}(\RR).$
%Finally, one writes
%$$
%{\mathcal H}_{\L}=\big\{ f\in W^{1,2}_{\rm loc}(\RR):
 %\L f=0 \ {\rm and}\ \int_{\RR} {|f(x)|^2\over (1+|x|)^{n+\epsilon} } dx <\infty \ {\rm for\ some\ } \epsilon>0 \big\}.
%$$
\fi

\begin{lem}\label{le2.7}
Suppose $ V\in B_q$ for some $q\geq n/2.$
 Assume that  $u\in W_{\rm loc}^{1,2}(\RR)$ is a weak solution of  $(-\Delta+V)u=0$ in $\RR$. Also assume that
there is a $d>0$ such that
\begin{eqnarray}\label{e2.9}
\int_{\RR}{|u(x)|\over 1+|x|^{n+d}} dx\leq C_{d}<\infty.
\end{eqnarray}
Then $u=0$ in $\RR$.
\end{lem}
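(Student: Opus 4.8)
The plan is to exploit a Liouville-type argument: a global weak solution of $(-\Delta+V)u=0$ on $\RR$ which grows slower than any polynomial of degree $n+d$ must vanish. First I would use the sub-mean-value property established in Lemma~\ref{le2.6}. Since $u$ is a weak solution of ${\mathbb L}$-type equation in the $n$-dimensional setting (take the equation $(-\Delta+V)u=0$ on $\RR$, which fits the hypothesis of Lemma~\ref{le2.6} with the roles of dimensions adjusted), the function $u^2$ is subharmonic, hence for every ball $B(x,r)\subset\RR$ one has the bound
$$
|u(x)|\leq C\Big(\frac{1}{r^{n}}\int_{B(x,2r)}|u(y)|\,dy\Big).
$$
Applying this at $x=0$ and letting $r\to\infty$, I would like to conclude $u(0)=0$; but a crude use of \eqref{e2.9} only gives $r^{-n}\int_{B(0,2r)}|u|\,dy = O(r^{d})$, which does not go to zero. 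So the naive argument fails, and the real content must use more than the sub-mean-value inequality.

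The key extra ingredient I expect to need is a \emph{quantitative decay} coming from the potential $V\not\equiv 0$: because $V\in B_q$ is nontrivial, the semigroup $e^{-t\L}$ and the associated Poisson semigroup enjoy the improved decay of Lemmas~\ref{le2.2} and~\ref{le2.4}, encoded through $\rho(x)$. The strategy is to represent $u$ through the heat semigroup: show that the growth bound \eqref{e2.9} together with the equation forces $e^{-t\L}u = u$ for all $t>0$ (stationarity of $u$ under the semigroup), and then use the decay estimate
$$
0\leq {\mathcal K}_t(x,y)\leq C_N\, t^{-n/2}e^{-|x-y|^2/(ct)}\Big(1+\tfrac{\sqrt t}{\rho(x)}+\tfrac{\sqrt t}{\rho(y)}\Big)^{-N}
$$
from Lemma~\ref{le2.2}(i). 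Taking $N$ large and $t\to\infty$, the factor $(1+\sqrt t/\rho(x))^{-N}$ kills any fixed polynomial growth of $u$ allowed by \eqref{e2.9}, so $u(x)=\lim_{t\to\infty}e^{-t\L}u(x)=0$. To justify $e^{-t\L}u=u$ I would first use Lemma~\ref{le2.6} (Moser estimate) to upgrade \eqref{e2.9} to a pointwise bound $|u(x)|\leq C(1+|x|)^{n+d}$ after averaging, which makes $u$ lie in a weighted $L^1$ or $L^2$ space on which the semigroup acts and on which the generator $\L$ is essentially self-adjoint; then the eigenvalue equation $\L u=0$ in the weak sense propagates to $e^{-t\L}u=u$ via the spectral theorem or via a direct Duhamel/Kato--Trotter computation using $h_t(x-y)-{\mathcal K}_t(x,y)$ from Lemma~\ref{le2.3}.

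The main obstacle will be making the identity $e^{-t\L}u=u$ rigorous for a function $u$ that is merely a weak solution with polynomial growth, rather than an $L^2$ function: one must control the boundary/decay terms in the integration by parts that turns $\int {\mathcal K}_t(x,y)u(y)\,dy - u(x)$ into $\int_0^t\!\!\int \partial_s\big({\mathcal K}_s u\big)$, and show these vanish. Here the Gaussian factor $e^{-|x-y|^2/(ct)}$ in Lemma~\ref{le2.2}(i)--(ii) against the polynomial growth $(1+|y|)^{n+d}$ does the job, and the time derivative bound in Lemma~\ref{le2.2}(ii), namely $|\partial_t {\mathcal K}_t(x,y)|\leq C_N t^{-(n+2)/2}e^{-|x-y|^2/(ct)}(1+\sqrt t/\rho(x)+\sqrt t/\rho(y))^{-N}$, controls the integrand near $s=0$ and near $s=t$. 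Once that identity holds, letting $t\to\infty$ with $N>n+d$ is routine and yields $u\equiv 0$.
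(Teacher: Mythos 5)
Your proposal takes a genuinely different route from the paper's. The paper does not go through the heat semigroup at all: it fixes a large $R$, writes $u\varphi = \Gamma_V \ast \{-2\nabla u\cdot\nabla\varphi - u\Delta\varphi\}$ with a cutoff $\varphi$ localized to the annulus $5R/8\le|y|\le 3R/4$, bounds the right-hand side by H\"older and Caccioppoli, and then uses Shen's decay $|\Gamma_V(x,y)|\le C_k\,(1+|x-y|/\rho(x))^{-k}|x-y|^{-(n-2)}$ together with the Moser bound $\int_{B(0,R)}|u|^2\lesssim R^{n+2d}$ to get $|u(x)|\le C_k\,\rho(x)^k R^{d-k}$, which vanishes as $R\to\infty$ once $k>d$. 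Your plan replaces the fundamental solution by the heat kernel and its $(1+\sqrt t/\rho(x))^{-N}$ decay; your ``let $t\to\infty$ with $N>n+d$'' step is correct (after the Moser upgrade to $|u(y)|\lesssim(1+|y|)^{n+d}$, one gets $|e^{-t\L}u(x)|\lesssim \rho(x)^N t^{-N/2}\big[(1+|x|)^{n+d}+t^{(n+d)/2}\big]\to 0$). You also rightly flag that the naive sub-mean-value argument alone only gives $O(R^d)$ and so fails.

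The genuine gap is the stationarity claim $e^{-t\L}u=u$, which you treat as a somewhat routine side step but which is where all the real work lives. The spectral-theorem route you float does not apply: $u$ lives only in a weighted space where $\L$ has no self-adjoint spectral calculus. The Duhamel/integration-by-parts route is viable, but to test $\L u=0$ against ${\mathcal K}_s(x,\cdot)$ you must truncate with a cutoff $\eta_R$, integrate by parts, and control the resulting annulus terms $\int\nabla{\mathcal K}_s\cdot u\,\nabla\eta_R$ and $\int{\mathcal K}_s\,\nabla u\cdot\nabla\eta_R$; the latter requires Caccioppoli on the annulus exactly as in the paper's estimate \eqref{e2.10}. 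In other words, once you carry out the stationarity step honestly, you have reconstructed the cutoff-plus-Caccioppoli machinery the paper uses directly on $\Gamma_V$, plus the extra layer of the semigroup. The paper's route is shorter because Shen's polynomial decay of the Green's function delivers the quantitative $R^{d-k}$ bound in one pass, whereas your route needs the same inputs to prove stationarity before the $t\to\infty$ limit can even be taken. So: correct in outline, a legitimately different (semigroup-based Liouville) strategy, but the hardest step is currently a sketch, and filling it in makes the argument strictly longer than the one in the paper.
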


\begin{proof}
Fix an  $R\geq 10$, we let $\varphi\in C_0^{\infty}(B(0, 3R/4))$ such that $0\leq \varphi\leq 1, \varphi=1$ on $B(0, 5R/8)$,
and $|\nabla \varphi|\leq C/R, |\nabla^2 \varphi|\leq C/R^2.$ Following  \cite[Lemma 6.1]{Shen1999}, one writes
$$
(-\Delta +V)(u\varphi)=-2\nabla u\cdot \nabla\varphi -u\Delta\varphi \ \ \ {\rm in}\ \RR.
$$
Then we have
$$
u(x)\varphi(x)=\int_{\RR}\Gamma_V(x,y)\{-2\nabla u\cdot \nabla\varphi -u\Delta\varphi\} dy,
$$
where $\Gamma_V(x,y)$ denotes  the  fundamental solution of $-\Delta+V$ in $\RR$.
%It is known (see \cite[Theorem 2.7]{Shen}) that
%for any $x,y\in\RR,$
%$$
%|\Gamma_V(x,y)|\leq C_k {1\over \left(1+ {|x-y|\over \rho(x)}\right)^k } {1\over |x-y|^{n-2}}.
%$$
Hence,  for any $x\in B(0, R/2)$,
\begin{eqnarray}\label{e2.10}
|u(x)|&\leq& {C\over R}\int_{5R/8\leq |y|\leq 3R/4} | \Gamma_V(x,y)| \left(|\nabla u(y)| +{|u(y)|\over R}\right) dy  \nonumber\\
 %+ {C\over R^2}\int_{5R/8\leq |y-x_0|\leq 3R/4}  \Gamma_V(x,y) |u(y)|dy \\
&\leq& {C\over R^2}\left\{ \int_{5R/8\leq |y|\leq 3R/4}  |\Gamma_V(x,y) |^2 dy\right\}^{1/2}
 \left\{ \int_{B(0, R)}  |u(y) |^2 dy\right\}^{1/2},
 \end{eqnarray}
 where we have used the H\"older inequality and Caccioppoli's inequality.

From the upper bound of $\Gamma_V(x,y)$ in \cite[Theorem 2.7]{Shen}, we have that for every $k>d$ and every  $x\in B(0, R/2)$,
 \begin{eqnarray}\label{e2.11}
 \int_{5R/8\leq |y|\leq 3R/4}  |\Gamma_V(x,y) |^2 dy  &\leq&   C_k\int_{5R/8\leq |y|\leq 3R/4}
 \bigg|{1\over \left(1+ {|x-y|\over \rho(x)}\right)^k } {1\over |x-y|^{n-2}} \bigg|^2 dy\nonumber\\
 &\leq& C_k \rho(x)^{2k} R^{-2k-n+4}.
 \end{eqnarray}
  Recall that the condition $B_{n/2}$ implies $V\in B_{q_0}$ for some $q_0>n/2$.
  It follows from Lemma 2.9 of \cite{Shen1999}   that
 $
 |u(y)|\leq |B|^{-1}\int_B|u(z)|dz$ if $ B=B(y, R)\subset \RR
$
(see also Lemma~\ref{le2.6}). Then we have
 \begin{eqnarray*}
\int_{B(0, R)}  |u(y) |^2 dy   &\leq&  C R^n \left( {C\over R^n}\int_{B(0, 2R)} |u(y)|dy \right)^2
\leq  C{ R^{n+2d}}\left(\int_{\RR} {|u(y)|\over 1+|y|^{n+d}}dy \right)^2
\leq  C C_d^2{ R^{n+2d}}.
 \end{eqnarray*}
 This, in combination with \eqref{e2.10} and \eqref{e2.11}, yields that for every $x\in B(0, R/2)$,
\begin{eqnarray*}
|u(x)|&\leq& C'_k C_{d}\,  \rho(x)^k R^{d-k}.
 \end{eqnarray*}
Letting $R\to +\infty$, we obtain that $u(x)=0$ and therefore, $u=0$  in the whole $\RR.$ The proof is complete.
\end{proof}

\begin{rem}For any $d\geq 0$,  one writes
\begin{eqnarray*}
{ {\mathcal H}_d(\L)}=\Big\{f\in W^{1,2}_{\rm loc}({\mathbb R}^n):
\L f =0 \ {\rm and}\   \ |f(x)|=O(|x|^d) \ \ {\rm as}\ |x|\rightarrow \infty\Big\}
\end{eqnarray*}
and
\begin{eqnarray*}
{ {\mathcal H}_{\L}}=\bigcup_{d:\ 0\leq d<\infty}
{ {\mathcal H}_d(\L)}.
\end{eqnarray*}
By  Lemma~\ref{le2.7}, it follows that
for any $d\geq 0$,
$$
{ {\mathcal H}_{\L}}={ {\mathcal H}_d(\L)}=\big\{0\big\}.
$$
See also   Proposition 6.5 of \cite{DY2}.
\end{rem}

\medskip

At the end of this  section, we establish the following  characterization of Poisson integrals of Schr\"odinger operators
with functions in $L^p(\RR), 1\leq p<\infty.$

\begin{prop}\label{prop2.8}
  Suppose $V\in B_q$ for some $q\geq (n+1)/2.$ If $u$ is a continuous weak solution of
  ${\mathbb L}u =0$ in ${\mathbb R}^{n+1}_+$ and there exist
 a constant $C>0$ and a $1\leq p< \infty$, such that
 $$
 \|u(\cdot,t)\|_{L^p(\RR)}=\left(\int_{\RR} |u(x,t)|^p dx\right)^{1/p}\leq C<\infty
 $$
 for all $t>0$, then

 \begin{itemize}
\item[(1)]  when $1<p< \infty$, $u(x,t)$ is the Poisson  integral of
a function $f$ in $L^p(\RR);$

\item[(2)]  if $p=1$, $u(x,t)$ is the Poisson-Stieltjes integral of a finite Borel measure; if,
in addition, $u(\cdot, t)$ is Cauchy in the $L^1$ norm as $t>0$ then $u(x,t)$ is the Poisson  integral of
a function $f$ in $L^1(\RR).$
 \end{itemize}
 \end{prop}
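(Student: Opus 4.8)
The plan is to adapt the classical argument for the Laplacian (as in Stein--Weiss, Chapter 2) to the Schr\"odinger setting, using the kernel bounds from Lemma~\ref{le2.4} in place of the explicit Poisson kernel. First I would fix $t>0$ and consider, for small $\varepsilon>0$, the function $u(\cdot, t+\varepsilon)=u_\varepsilon(\cdot, t)$; since $u_\varepsilon$ is again ${\mathbb L}$-harmonic in a neighborhood of $\overline{{\mathbb R}^{n+1}_+}$ and is bounded on each horizontal slice, the idea is to write $u(x, t+\varepsilon)=e^{-t\sqrt{\L}}\big(u(\cdot,\varepsilon)\big)(x)$ for $\varepsilon>0$ by the semigroup property, then pass to the limit in $\varepsilon$. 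For $1<p<\infty$ the family $\{u(\cdot,\varepsilon)\}_{\varepsilon>0}$ is bounded in $L^p(\RR)$, which is reflexive, so by weak compactness there is a sequence $\varepsilon_j\to 0$ with $u(\cdot,\varepsilon_j)\rightharpoonup f$ weakly in $L^p(\RR)$ for some $f$ with $\|f\|_{L^p}\le C$; testing against the kernel ${\mathcal P}_t(x,\cdot)$, which lies in $L^{p'}(\RR)$ by the pointwise bound in Lemma~\ref{le2.4}(i), gives $u(x,t+\varepsilon_j)=\int {\mathcal P}_t(x,y)u(y,\varepsilon_j)\,dy\to \int {\mathcal P}_t(x,y)f(y)\,dy=e^{-t\sqrt{\L}}f(x)$, and by continuity of $u$ the left side tends to $u(x,t)$. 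Thus $u(x,t)=e^{-t\sqrt{\L}}f(x)$.

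The one point needing care in the reflexive case is justifying $u(x,t+\varepsilon)=e^{-t\sqrt{\L}}(u(\cdot,\varepsilon))(x)$; here I would invoke the uniqueness portion implicit in Proposition~\ref{prop2.8} itself, or more directly argue that both sides are ${\mathbb L}$-harmonic in ${\mathbb R}^{n+1}_+$, agree as $t\to 0^+$ in the $L^p$ sense, and are each $L^p$-bounded on slices, so their difference $w$ satisfies the hypotheses with boundary value $0$; then a Moser estimate (Lemma~\ref{le2.6}) plus the decay from $L^p$-boundedness forces $w\equiv 0$ via a reduction to the elliptic Liouville statement of Lemma~\ref{le2.7} applied on slices, or by an energy/Phragm\'en--Lindel\"of argument. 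Alternatively, and more cleanly, one can run the whole weak-limit argument without ever invoking the semigroup identity: extract $f$ as the weak limit, set $\tilde u=e^{-t\sqrt\L}f$, and show $u=\tilde u$ by the same uniqueness reasoning, since $\tilde u$ is $L^p$-bounded on slices (by Young's inequality and the kernel bound) with the same boundary trace.

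For $p=1$ the space $L^1(\RR)$ is not reflexive, so instead I would use that the family $\{u(x,\varepsilon)\,dx\}_{\varepsilon>0}$ is a bounded set of finite Borel measures on $\RR$, hence by the Banach--Alaoglu theorem (weak-$\ast$ compactness of bounded sets in $\mathcal M(\RR)=C_0(\RR)^*$) there is a sequence $\varepsilon_j\to0$ and a finite Borel measure $\mu$ with $u(\cdot,\varepsilon_j)\,dx\rightharpoonup d\mu$ weak-$\ast$. Since for fixed $(x,t)\in{\mathbb R}^{n+1}_+$ the function $y\mapsto {\mathcal P}_t(x,y)$ is continuous and vanishes at infinity (again by Lemma~\ref{le2.4}(i)), testing against it gives $u(x,t)=\int_{\RR}{\mathcal P}_t(x,y)\,d\mu(y)$, the Poisson--Stieltjes integral of $\mu$. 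If in addition $u(\cdot,t)$ is Cauchy in $L^1$ as $t\to0^+$, then $u(\cdot,t)\to f$ in $L^1(\RR)$ for some $f\in L^1$, and the measures $u(\cdot,\varepsilon_j)\,dx$ converge in total variation to $f\,dx$, forcing $d\mu=f\,dx$; hence $u(x,t)=e^{-t\sqrt{\L}}f(x)$ is the Poisson integral of $f$.

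The main obstacle I anticipate is the uniqueness step: showing that an ${\mathbb L}$-harmonic function on ${\mathbb R}^{n+1}_+$ that is $L^p$-bounded on horizontal slices and has vanishing trace must be identically zero. The restriction $q\ge (n+1)/2$ in the hypothesis (rather than $q\ge n/2$) is presumably exactly what is needed to make the kernel ${\mathcal P}_t(x,\cdot)$ lie in $L^{p'}$ with the right uniformity and to push the gradient/Caccioppoli estimates through the subordination formula~\eqref{e2.6}; I would expect the proof to reduce this uniqueness to Lemma~\ref{le2.7} by freezing $t$ and regarding slices, combined with the boundary decay, and this bookkeeping — tracking the decay of $u$ and of $\nabla u$ near $t=0$ and at spatial infinity simultaneously — is where the real work lies, the rest being the standard functional-analytic extraction of the boundary object.
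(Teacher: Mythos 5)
Your functional-analytic skeleton is exactly right and matches the paper: extract a weak (resp. weak-$*$) limit $f$ (resp. $\mu$) from the bounded family $u(\cdot, t_k)$, pair with the Poisson kernel, and in the $L^1$-Cauchy case upgrade to a genuine $L^1$ boundary function. You also correctly flag the semigroup identity $u(x,t+t_k)=e^{-t\sqrt\L}\bigl(u(\cdot,t_k)\bigr)(x)$ as the crux. But you do not actually close that gap, and the strategies you sketch for it are not the ones that work.

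The missing idea is an \emph{odd reflection in $t$}. The paper sets $w(x,t)=e^{-t\sqrt\L}\bigl(u(\cdot,t_k)\bigr)(x)-u(x,t+t_k)$, which is a weak solution of $\mathbb L w=0$ in $\RR^{n+1}_+$ with $w(x,0)=0$, and extends it to $\overline w$ on all of $\RR^{n+1}$ by $\overline w(x,-t)=-w(x,t)$. The crucial observation is that $V(x,t):=V(x)$, viewed as a potential on $\RR^{n+1}$, again lies in the class $B_q(\RR^{n+1})$ when $V\in B_q(\RR^n)$, and the operator $\overline{\mathbb L}=-\partial_{tt}-\Delta_x+V$ is now a Schr\"odinger operator in the full $(n+1)$-dimensional space. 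One then verifies the decay hypothesis \eqref{e2.9} for $\overline w$ using the $L^p$-slice bound, and applies the Liouville Lemma~\ref{le2.7} \emph{in dimension $n+1$} to conclude $\overline w\equiv 0$. This is \emph{not} ``Lemma~\ref{le2.7} applied on slices'' as you suggest — freezing $t$ does not give an $\L$-harmonic function of $x$ — nor is it a Phragm\'en--Lindel\"of or energy argument; it is a genuine upgrade to an $(n+1)$-dimensional Liouville theorem made possible by the reflection.

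This also explains the hypothesis $q\ge (n+1)/2$, which you misdiagnose. It has nothing to do with the integrability of $\mathcal P_t(x,\cdot)$ in $L^{p'}$ or with pushing Caccioppoli through the subordination formula; the kernel bounds in Lemma~\ref{le2.4} already hold for $q>n/2$. Rather, Lemma~\ref{le2.7} in ambient dimension $n+1$ requires the potential to be in $B_q$ with $q\ge (n+1)/2$ (the critical exponent being half the ambient dimension), and since one must apply it to $\overline{\mathbb L}$ on $\RR^{n+1}$ after reflection, the threshold moves from $n/2$ up to $(n+1)/2$. Without identifying the reflection as the mechanism, the role of that exponent is invisible, and indeed this is exactly where your proposal stalls.
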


 \begin{proof} The proof of Proposition~\ref{prop2.8} is standard (see for instance, Theorem 2.5, Chapter 2 in \cite{SW}).
 We give a brief argument of this proof for completeness
 and the convenience of the reader.

  If $1<p< \infty$ and $ \|u(\cdot,t)\|_{L^p(\RR)}\leq C<\infty$
for all $t>0$, then there exists a sequence $\{t_k\}$ such that $\lim\limits_{k \rightarrow \infty} t_k=0$, and a function $f\in L^p(\RR)$
such that $u(\cdot, t_k)$ converges weakly to $f$ as $k\rightarrow \infty.$ That is, for each $g\in L^{p'}(\RR), 1/p+1/p'=1,$
$$
\lim\limits_{k\rightarrow \infty} \int_{\RR} u(y, t_k) g(y)dy =\int_{\RR} f(y) g(y)dy.
$$
If $p=1$ there exists a finite Borel measure $\mu$ that is the weak-$\ast$ limit of a sequence $\{ u(\cdot, t_k)\}$. That is,
for each $g$ in $C_0(\RR)$,
$$
\lim\limits_{k\rightarrow \infty} \int_{\RR} u(y, t_k) g(y)dy =\int_{\RR}  g(y)d\mu(y).
$$
For any $x\in\RR, t>0$, we take $g(y)=p_t(x,y)\in L^{p'}(\RR)$ for $1\leq p'\leq \infty$ and also belongs to $C_0(\RR)$
we have, in particular,
$$
\lim\limits_{k\rightarrow \infty} \int_{\RR} p_t(x,y)u(y, t_k) dy =\int_{\RR} p_t(x,y) f(y)  dy
$$
when  $1<p< \infty$, and
$$
\lim\limits_{k\rightarrow \infty} \int_{\RR} p_t(x,y) u(y, t_k)  dy =\int_{\RR}  p_t(x,y) d\mu(y)
$$
when $p=1.$

 Since $u$ is continuous, $\lim\limits_{t\rightarrow 0^+} u(x, t+t_k)=u(x, t_k)$.
 It is well known that if  $u(\cdot, t_k)\in L^p(\RR), 1\leq p<\infty$,
then  $\lim\limits_{t\rightarrow 0^+} e^{-t\sqrt{\L}}(u(\cdot, t_k))=u(x, t_k)$ for almost every $x\in \RR.$
Set $w(x,t)=e^{-t\sqrt \L}(u(\cdot, t_k))(x)-u(x,t+t_k)$.  The function  $w$ satisfies ${\mathbb L}w =0, w(x,0)=0$.
Define,
 \begin{align*}%\label{eq9}
{\overline w}(x,t)=\left\{
 \begin{array}{rrl}
  w(x,t),&\ \ \ &t\ge 0,\\[6pt]
  -w(x,-t),& \ \ &t<0.
 \end{array}
 \right.
 \end{align*}
Then ${\overline w}$ satisfies
 $$
{\overline {\mathbb L}} {\overline w}(x,t)=0, \ \ \ \ \ \ (x,t)\in  {\mathbb R}^{n+1},
 $$
where  ${\overline {\mathbb L}}$ is an extension operator of ${ {\mathbb L}}$ on $\Real^{n+1}$.
Observe that if $V(x)\in B_q(\Real^{n})$ with $q\ge (n+1)/2,$ then it can be verified
that  $V(x,t)=V(x)\in B_q$ on $\Real^{n+1}$. To apply Lemma \ref{le2.7}, we need to verify \eqref{e2.9}. Indeed,
\begin{eqnarray*}
\int_{{\mathbb R}^{n+1}} {|{\overline w}(x,t)|\over 1+  |(x,t)|^{n+3}}dxdt&\leq& C\int_0^{\infty}  {1\over 1+t^{2}} \left(
\int_{\RR} {|w(x,t)|\over 1+  |x|^{n+{1}}}dx\right) dt \\
&\leq& C\int_0^{\infty}  {1\over 1+t^{2}} \left(\sup_{t>0} \|w(\cdot, t)\|_p  \right) dt \\
&\leq& C\sup_{t>0} \left(\int_{\RR} |u(x,t)|^p dx\right)^{1/p} <\infty,
\end{eqnarray*}
and so \eqref{e2.9} holds. By Lemma \ref{le2.7}, we have that $\overline w\equiv0$, and then $w=0$, that is,
 $$u(x,t+t_k)=e^{-t\sqrt \L}(u(\cdot, t_k))(x), \ \ \  x\in\RR, \ t>0.$$
Therefore,
$$
u(x,t)=\lim\limits_{k\rightarrow \infty}
u(x,t+t_k) =\lim\limits_{k\rightarrow \infty}
e^{-t\sqrt \L}(u(\cdot, t_k))(x) =e^{-t\sqrt{\L}}f(x)
$$
when $1< p< \infty$, and $u(x,t)=\int_{\RR}  p_t(x,y) d\mu(y) $ when $p=1.$
The proof is complete.
 \end{proof}

 \medskip

%%%%%%%%%%%%%%%%%%%%%%%%%%%%%%%%%%%%%%%%%%%%%%%%%%%%%%
\section{Proof of  the Main Theorem }
\setcounter{equation}{0}
%%%%%%%%%%%%%%%%%%%%%%%%%%%%%%%%%%%%%%%%%%%%%%%%%%%%%%

\subsection{Existence  of boundary values of ${\mathbb L}$-harmonic functions}
%Existence and estimates on the BMO traces of the Poisson integrals}
%of Schr\"odinger operators}
%Existence and estimates on the BMO traces of the Poisson semigroups of Schr\"odinger operators}
\begin{lem}\label{le3.1} For every  $u\in {\rm HMO_\L}(\Real_+^{n+1})$ and
 for every $k\in{\mathbb N}$, there exists a constant $C_k>0$ such that
\begin{equation*} \label{dd}
\int_{\RR}{|u(x,{1/k})|^2\over (1+|x|)^{2n}}  dx\leq C_k <\infty,
\end{equation*}
hence $u(x, 1/k)\in L^2((1+|x|)^{-2n}dx)$. Therefore for  all $k\in{\mathbb N}$, $e^{-t\sqrt{\L}}(u(\cdot, {1/k}))(x)$ exists
everywhere in ${\mathbb R}^{n+1}_+$.
\end{lem}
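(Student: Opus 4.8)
The plan is to prove a polynomial growth bound for $\int_{B(0,R)}|u(y,1/k)|^2\,dy$ and then sum over dyadic annuli. Write $t_0:=1/k\le 1$, $\|u\|:=\|u\|_{{\rm HMO_\L}(\Real_+^{n+1})}$, and, for $R\ge1$, $\Omega_R:=B(0,R)\times(t_0/2,t_0)\subset\Real_+^{n+1}$ and $u_{\Omega_R}:=|\Omega_R|^{-1}\int_{\Omega_R}u$; note $u_{\Omega_1}$ is finite because $u$, being $C^1$ on $\Real_+^{n+1}$, is bounded on the compact set $\overline{\Omega_1}$. Applying \eqref{e1.8} with $B(x_B,r_B)=B(0,R)$ and $r_B=R\ (\ge 1\ge t_0)$ gives the two facts used throughout: $\int_0^R\!\int_{B(0,R)}s\,|\nabla u|^2\,dy\,ds\le R^n\|u\|^2$, and, since $s\ge t_0/2$ on $\Omega_R$, $\int_{\Omega_R}|\nabla u|^2\,dy\,ds\le 2t_0^{-1}R^n\|u\|^2$. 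The target estimate is
\begin{equation}\label{e-HMO-growth}
\int_{B(0,R)}|u(y,t_0)|^2\,dy\ \le\ C(n,k)\,R^{n+2}\bigl(\|u\|^2+|u_{\Omega_1}|^2\bigr),\qquad R\ge 1 .
\end{equation}

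To prove \eqref{e-HMO-growth} I would argue in three steps. \emph{(a) Oscillation in $t$.} For $s\in(t_0/2,t_0)$ and fixed $y$, $u(y,t_0)-u(y,s)=\int_s^{t_0}\partial_\tau u(y,\tau)\,d\tau$, so by Cauchy--Schwarz $|u(y,t_0)-u(y,s)|^2\le\bigl(\int_s^{t_0}\tfrac{d\tau}{\tau}\bigr)\bigl(\int_s^{t_0}\tau|\partial_\tau u(y,\tau)|^2\,d\tau\bigr)\le(\log2)\int_0^{t_0}\tau|\partial_\tau u(y,\tau)|^2\,d\tau$; integrating in $y$ over $B(0,R)$ and using the first Carleson fact, $\sup_{t_0/2<s<t_0}\int_{B(0,R)}|u(y,t_0)-u(y,s)|^2\,dy\le(\log2)R^n\|u\|^2$. \emph{(b) Solid oscillation.} The Poincaré inequality on the convex set $\Omega_R$ (of diameter $\le 3R$) and the second Carleson fact give $\int_{\Omega_R}|u-u_{\Omega_R}|^2\le C R^2\int_{\Omega_R}|\nabla u|^2\le C t_0^{-1}R^{n+2}\|u\|^2$; averaging in $s$ produces $s^\ast\in(t_0/2,t_0)$ with $\int_{B(0,R)}|u(y,s^\ast)-u_{\Omega_R}|^2\,dy\le 2t_0^{-1}\int_{\Omega_R}|u-u_{\Omega_R}|^2\le Ct_0^{-2}R^{n+2}\|u\|^2$, so combining with (a) at $s=s^\ast$ yields $\int_{B(0,R)}|u(y,t_0)-u_{\Omega_R}|^2\,dy\le C(n,t_0)R^{n+2}\|u\|^2$. \emph{(c) Telescoping the averages.} Since $\Omega_{2^j}\subset\Omega_{2^{j+1}}$ with $|\Omega_{2^{j+1}}|=2^n|\Omega_{2^j}|$, the Poincaré bound on $\Omega_{2^{j+1}}$ gives $|u_{\Omega_{2^j}}-u_{\Omega_{2^{j+1}}}|^2\le 2^n|\Omega_{2^{j+1}}|^{-1}\int_{\Omega_{2^{j+1}}}|u-u_{\Omega_{2^{j+1}}}|^2\le C(n,t_0)2^{2j}\|u\|^2$, whence summing in $j$ (and passing from $\Omega_{2^J}$ to a general $\Omega_R$ with $2^J\le R<2^{J+1}$ at the cost of one more bounded Poincaré term) $|u_{\Omega_R}|\le C(n,t_0)(|u_{\Omega_1}|+R\|u\|)$ for $R\ge1$. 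Inserting (b) and (c) into $\int_{B(0,R)}|u(y,t_0)|^2\le 2\int_{B(0,R)}|u(y,t_0)-u_{\Omega_R}|^2+2|B(0,R)|\,|u_{\Omega_R}|^2$ proves \eqref{e-HMO-growth}.

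Writing $\RR=B(0,1)\cup\bigcup_{j\ge1}A_j$ with $A_j=B(0,2^j)\setminus B(0,2^{j-1})$, so that $(1+|x|)^{-2n}\le 2^{-2n(j-1)}$ on $A_j$, one gets $\int_{\RR}|u(x,1/k)|^2(1+|x|)^{-2n}\,dx\le\int_{B(0,1)}|u(x,1/k)|^2\,dx+\sum_{j\ge1}2^{-2n(j-1)}\int_{B(0,2^j)}|u(x,1/k)|^2\,dx$, and by \eqref{e-HMO-growth} the series is dominated by $C(n,k)(\|u\|^2+|u_{\Omega_1}|^2)\sum_{j\ge1}2^{(n+2-2n)j}$, which converges exactly because $n+2<2n$, i.e.\ $n\ge3$; the first term is finite by continuity of $u$. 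This gives $u(\cdot,1/k)\in L^2((1+|x|)^{-2n}\,dx)$ with $C_k=C(n,k)(\|u\|^2+|u_{\Omega_1}|^2)$. For the last assertion it suffices that $\int_{\RR}|{\mathcal P}_t(x,y)|\,|u(y,1/k)|\,dy<\infty$ for every $(x,t)\in\Real_+^{n+1}$; fixing $(x,t)$ and using Lemma~\ref{le2.4}(i) — keeping only the factor $(1+|x-y|/\rho(x))^{-N}$ and recalling that $\rho(x)$ is a fixed positive number — one has $|{\mathcal P}_t(x,y)|\le C_{N,x,t}(1+|y|)^{-(n+1+N)}$ for $|x-y|\ge\max\{t,\rho(x)\}$, and then for $N\ge n$ the Cauchy--Schwarz inequality against the weight $(1+|y|)^{-2n}$, together with the local integrability of $u(\cdot,1/k)$, yields the claim; hence $e^{-t\sqrt{\L}}(u(\cdot,1/k))(x)=\int_{\RR}{\mathcal P}_t(x,y)u(y,1/k)\,dy$ is defined everywhere in $\Real_+^{n+1}$.

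The step I expect to be the main obstacle is (c): since nothing is known a priori about the growth of $u$ at infinity, the slice $u(\cdot,t_0)$ cannot be anchored to a single constant, and one must telescope the slab averages $u_{\Omega_R}$ through all dyadic scales, checking that the accumulated Poincaré contributions sum to a geometric series and that the resulting exponent $n+2$ stays strictly below $2n$ — which is precisely where the hypothesis $n\ge 3$ enters.
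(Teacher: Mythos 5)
Your argument is correct, but it takes a genuinely different route from the paper. The paper writes $u(x,1/k)-u(x/|x|,1/k)$ as a sum of two vertical increments plus one radial increment at height $|x|$; the vertical pieces are controlled by the pointwise bound $|\partial_t u(x,t)|\le C\,t^{-1}\|u\|_{{\rm HMO}_\L}$, which is derived from the Moser-type interior estimate of Lemma~\ref{le2.6} applied to $\partial_t u$ (hence uses that $u$ solves $\mathbb{L}u=0$), and the radial piece is bounded by integrating $\nabla_y u$ along rays and invoking the Carleson condition on dyadic annuli. You instead work with the slabs $\Omega_R=B(0,R)\times(t_0/2,t_0)$, control the vertical oscillation by Cauchy--Schwarz against the $t$-part of the Carleson integral, control the solid oscillation by the (Payne--Weinberger) Poincar\'e inequality on the convex slab, and telescope the slab averages $u_{\Omega_{2^j}}$ dyadically to get $|u_{\Omega_R}|\lesssim |u_{\Omega_1}|+R\|u\|$, yielding the growth bound $\int_{B(0,R)}|u(\cdot,t_0)|^2\lesssim R^{n+2}$.

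What each route buys: your proof is more elementary in the sense that it does not invoke the PDE at all --- only $u\in C^1$, the Carleson condition, and the standard Poincar\'e inequality; in particular it would apply to any $C^1$ function with a finite ${\rm HMO}_\L$ seminorm, not just $\mathbb{L}$-harmonic ones. The price is that the telescoping makes the large-scale average grow linearly in $R$, producing the exponent $n+2$, and the final dyadic sum converges only because $n+2<2n$, i.e.\ $n\ge3$. The paper's radial decomposition anchors everything to the sphere $|x|=1$ (where $u(\cdot,1/k)$ is bounded by continuity), so the Moser bound yields only a logarithmic error in the vertical pieces and a $2^{m(2n-1)}$ bound for the radial piece; the resulting sums converge for every $n\ge1$. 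Since the paper already assumes $n\ge3$ for the Schr\"odinger setting, this loss is immaterial here, but it is worth being aware that the dimensional restriction is an artifact of the telescoping rather than intrinsic to the statement. Your treatment of the final assertion (that $e^{-t\sqrt{\L}}(u(\cdot,1/k))(x)$ is defined everywhere), via the rapid polynomial decay of the Poisson kernel from Lemma~\ref{le2.4}(i) and Cauchy--Schwarz against the weight $(1+|y|)^{-2n}$, is in substance the same as the paper's one-line remark.
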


\begin{proof}  Since $u\in C^{1}({\mathbb R}^{n+1}_+)$, it  reduces to show that for every $k\in{\mathbb N},$
\begin{eqnarray}\label{e3.1}
\int_{|x|\geq 1} {|u(x,{1/k})- u(x/|x|, 1/k) |^2 \over (1+|x|)^{2n}} dx\leq C_k\|u\|^2_{{\rm HMO_\L}(\Real_+^{n+1})}<\infty.
 \end{eqnarray}
To do this, we write
\begin{align*}
&\hspace{-0.3cm}u(x, 1/k)- u(x/|x|, 1/k)\\&=\big[u(x, 1/k)- u(x, |x|)\big]
 +\big[u(x, |x|)-u(x/|x|, |x|)\big] +\big[u(x/|x|, |x|)-u(x/|x|, 1/k)\big].
 \end{align*}
Let
  \begin{eqnarray*}
 I=  \int_{|x|\geq 1 } {|u(x, 1/k)- u(x, |x|) |^2 \over (1+|x|)^{2n}} dx,
 \end{eqnarray*}
  \begin{eqnarray*}
 II=  \int_{|x|\geq 1 } {|u(x, |x|)-u(x/|x|, |x|) |^2 \over (1+|x|)^{2n}} dx,
 \end{eqnarray*}and
  \begin{eqnarray*}
 III=  \int_{|x|\geq 1 } {|u(x/|x|, |x|)-u(x/|x|, 1/k) |^2 \over (1+|x|)^{2n}} dx.
 \end{eqnarray*}

 Set $Y_0=(x,t)$ and $r=t/4$. We use  Lemma~\ref{le2.6} for $\partial_t u$ and Schwarz's inequality to obtain
 \begin{eqnarray}\label{e3.2}
 \big| \partial_t u(x, t)\big| &\leq& C\Big({1\over r^{n+1}} \int_{B(Y_0, 2r)}
 | \partial_s u(y, s) |^2 {dY } \Big)^{1/2}\nonumber\\
  &\leq& C\Big({1\over t^{n+1}} \int_{B(x, t/2)}\int_{t/2}^{3t/2}
 | \partial_s u(y, s) |^2 {dsdy } \Big)^{1/2}\nonumber\\
  &\leq& C t^{-1}\Big({1\over |B(x, 2t)|} \int_{0}^{2t}\int_{B(x, 2t)}
  s|  \partial_s u(y, t) |^2 {dyds } \Big)^{1/2}
  \nonumber\\
  &\leq&  C  t^{-1}\|u\|_{{\rm HMO_\L}(\Real_+^{n+1})},
  \end{eqnarray}
which gives
 \begin{eqnarray}\label{e3.3}
 | u(x, |x|)-u(x, 1/k) |  =  \Big| \int_{1/k}^{|x|}   \partial_t u(x, t) dt \ \Big| \leq C\log(k|x|) \|u\|_{{\rm HMO_\L}(\Real_+^{n+1})}.
 \end{eqnarray}
It follows that
   \begin{eqnarray*}
 I+III
 %&\leq&  \int_{B^c} {1\over (1+|x|)^{n+1}} \bigg( \int_{1/k}^{|x|}    |\partial_t u(0, t)| dt\bigg) dx\\
 &\leq& C\|u\|^2_{{\rm HMO_\L}(\Real_+^{n+1})} \int_{|x|\geq 1 } {1\over (1+|x|)^{2n}}  \log^2 (k |x|)     dx  \\
 &\leq& C_k  \|u\|^2_{{\rm HMO_\L}(\Real_+^{n+1})}.
  \end{eqnarray*}

For the term $II,$  we  have that for any $x\in \RR,$
 $$
  u(x, |x|)- u(x/|x|, |x|)  =\int_1^{|x|}  D_r u(r\omega, |x|)  dr, \ \ \ \ x=|x| \omega.
 $$
 Let $B=B(0, 1)$  and  $2^mB=B(0, 2^m)$.
Note that for every $m\in{\mathbb N}$,  we have
  \begin{align*}
 %\int_{2^mB\backslash 2^{m-1}B}   \big| u(x, |x|) - u(0, |x|)\big|    dx&\leq
   \int_{2^mB\backslash 2^{m-1}B} \left| \int_1^{|x|}\left|    D_r  u(r\omega, |x|) \right|   dr \right|^2 dx
  &=     \int_{2^{m-1}}^{2^{m}} \int_{|\omega|=1} \left| \int_1^{\rho}     D_r u(r\omega, \rho)    dr \right|^2 \rho^{n-1}  d\omega d\rho   \nonumber\\
      &\leq   2^{mn} \int_{2^{m-1}}^{2^{m}} \int_{|\omega|=1}\int_1^{ 2^{m}}    |  D_r u(r\omega, \rho)  |^2 dr d\omega d\rho   \nonumber\\
    %&\leq     \int_{2^{m-1}r_B}^{2^{m}r_B} \int_{B(x_B, r_B)}   \big|  D_r u(y, t) \big| dy dt   \nonumber\\
	  &\leq    2^{mn} \int_{2^{m-1}}^{2^{m}} \int_{2^mB\backslash B}   |  \nabla_y u(y, t)  |^2 |y|^{1-n} dy dt\nonumber\\
	  &\leq    2^{mn} \int_{2^{m-1}}^{2^{m}} \int_{2^mB}   |  \nabla_y u(y, t)  |^2  dy dt,
  \end{align*}
  which gives
  \begin{align*}
 \int_{2^mB\backslash 2^{m-1}B}   | u(x, |x|) - u(x/|x|, |x|) |^2    dx
  %&\leq    \int_{2^mB\backslash 2^{m-1}B}   \int_0^{|x|}\left|    D_r  u(r\omega, |x|) \right|   dr  dx\nonumber\\
   %&\leq     \int_{2^{m-1}r_B}^{2^{m}r_B} \int_{2^mB }   \big|  \nabla_y u(y, t) \big| dydt  \nonumber\\
    &\leq     C2^{m(2n-1)} \left( {1\over |2^mB|}  \int_{0}^{2^{m}} \int_{2^mB }    | t \nabla_y u(y, t)  |^2
	{dydt\over t}   \right) \nonumber
	 \\
    &\leq  C  2^{m(2n-1)}\|u\|^2_{{\rm HMO_\L}(\Real_+^{n+1})}.
  \end{align*}
Therefore,
 \begin{align*}
II
  &\leq C \sum_{m=1}^{\infty}  {1\over 2^{2mn}}
   \int_{2^mB\backslash 2^{m-1}B}    | u(x, |x|) - u(x/|x|, |x|) |^2    dx\leq C\|u\|^2_{{\rm HMO_\L}(\Real_+^{n+1})}.
  \end{align*}
  Combining estimates of $I, II$ and $III$, we have obtained \eqref{e3.1}.

  Note that by Lemma~\ref{le2.4},  if
  $V\in B_q$ for some $q\geq n/2$, then  the semigroup kernels ${\mathcal P}_t(x,y)$, associated to $e^{-t\sqrt{\L}}$,
  decay
faster than any power of $1/|x-y|$.
Hence,    for  all $k\in{\mathbb N}$, $e^{-t\sqrt{\L}}(u(\cdot, {1/k}))(x)$ exists
everywhere in ${\mathbb R}^{n+1}_+$.
This completes the proof.
  \end{proof}

\begin{lem}\label{le3.2} For every  $u\in {\rm HMO_\L}(\Real_+^{n+1})$,
we have that for every $k\in{\mathbb N}$,
\begin{equation*}
u(x, t+{1/k})=e^{-t\sqrt{\L}}\big(u(\cdot, {1/k})\big)(x), \ \ \ \ x\in\RR, \  t>0.
\end{equation*}
\end{lem}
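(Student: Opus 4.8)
The plan is to mirror the closing argument in the proof of Proposition~\ref{prop2.8}, using the weighted $L^2$ bound of Lemma~\ref{le3.1} in place of the $L^p$ control available there. Fix $k\in{\mathbb N}$ and write $g_k:=u(\cdot,1/k)$. By Lemma~\ref{le3.1} we have $g_k\in L^2\big((1+|x|)^{-2n}dx\big)$ --- hence $\int_{\RR}|g_k(y)|(1+|y|)^{-2n}\,dy<\infty$ by Cauchy--Schwarz --- and $e^{-t\sqrt\L}g_k(x)$ is defined for every $(x,t)\in\Real^{n+1}_+$. Consider
$$
w(x,t):=e^{-t\sqrt\L}g_k(x)-u(x,t+1/k),\qquad (x,t)\in\Real^{n+1}_+.
$$
Since $V$ is independent of $t$, the translate $(x,t)\mapsto u(x,t+1/k)$ is a $C^1$ weak solution of ${\mathbb L}v=0$ on $\{t>-1/k\}$, and $e^{-t\sqrt\L}g_k$ is ${\mathbb L}$-harmonic in $\Real^{n+1}_+$; thus $w$ is ${\mathbb L}$-harmonic in $\Real^{n+1}_+$. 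The first step is to show that $w$ extends continuously to $\overline{\Real^{n+1}_+}$ with $w(\cdot,0)\equiv0$: one has $u(x,t+1/k)\to u(x,1/k)=g_k(x)$ as $t\to0^+$ by continuity of $u$, while $e^{-t\sqrt\L}g_k(x)\to g_k(x)$ follows from the standard approximate-identity argument, writing $e^{-t\sqrt\L}g_k(x)-g_k(x)\,e^{-t\sqrt\L}{\mathbf 1}(x)=\int_{\RR}{\mathcal P}_t(x,y)\big(g_k(y)-g_k(x)\big)\,dy$ and using the pointwise bound of Lemma~\ref{le2.4}(i) (splitting into $|x-y|<\delta$, where continuity of $g_k$ at $x$ is used, and $|x-y|\geq\delta$, where the rapid decay of ${\mathcal P}_t(x,\cdot)$ in $|x-y|$ absorbs the polynomial growth permitted by $g_k\in L^2((1+|x|)^{-2n}dx)$) together with $e^{-t\sqrt\L}{\mathbf 1}(x)\to1$, which comes from Lemma~\ref{le2.4}(iii).

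Next I extend $w$ oddly across $\{t=0\}$: put $\overline w(x,t)=w(x,t)$ for $t\geq0$ and $\overline w(x,t)=-w(x,-t)$ for $t<0$. Because $w$ is continuous on $\overline{\Real^{n+1}_+}$, ${\mathbb L}$-harmonic in the interior and vanishes on $\{t=0\}$, the reflection principle for the $t$-independent operator $-\partial_{tt}-\Delta_x+V(x)$ shows that $\overline w$ is a weak solution of $\overline{\mathbb L}\,\overline w=0$ in $\Real^{n+1}$, exactly as in the proof of Proposition~\ref{prop2.8}; note that $V(x,t):=V(x)$ lies in $B_q(\Real^{n+1})$ since $q\geq n\geq(n+1)/2$. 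It then suffices to verify the decay hypothesis \eqref{e2.9} for $\overline w$ in dimension $n+1$, namely $\int_{\Real^{n+1}}\frac{|\overline w(x,t)|}{1+|(x,t)|^{n+1+d}}\,dxdt<\infty$ for some $d>0$: granting this, Lemma~\ref{le2.7} applied on $\Real^{n+1}$ forces $\overline w\equiv0$, hence $w\equiv0$, which is precisely the identity $u(x,t+1/k)=e^{-t\sqrt\L}\big(u(\cdot,1/k)\big)(x)$.

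For the decay hypothesis I estimate $|\overline w(x,t)|=|w(x,|t|)|\leq|e^{-|t|\sqrt\L}g_k(x)|+|u(x,|t|+1/k)|$ and treat the two terms separately, crucially never needing a pointwise bound on $g_k$, only $g_k\in L^2\big((1+|x|)^{-2n}dx\big)$. For the second term, the Moser-type pointwise estimate \eqref{e3.2} gives $|u(x,t+1/k)|\leq|g_k(x)|+C\|u\|_{{\rm HMO_\L}}\log(1+kt)$, whence $\int_{\Real^{n+1}}\frac{|g_k(x)|}{1+|(x,t)|^{n+1+d}}\,dxdt\leq\int_{\RR}|g_k(x)|\Big(\int_{\Real}\frac{dt}{1+|(x,t)|^{n+1+d}}\Big)dx\leq C\int_{\RR}\frac{|g_k(x)|}{(1+|x|)^{n+d}}\,dx<\infty$ by Cauchy--Schwarz as soon as $2d>n$, while the $\log$-contribution is trivially finite for $d$ large. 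For the first term, $|e^{-t\sqrt\L}g_k(x)|\leq\int_{\RR}{\mathcal P}_t(x,y)|g_k(y)|\,dy$, and interchanging the order of integration reduces matters to showing $\tilde J(y):=\int_0^\infty\!\int_{\RR}\frac{{\mathcal P}_t(x,y)}{1+|(x,t)|^{n+1+d}}\,dxdt\leq C(1+|y|)^{-(n+d)}$, after which Cauchy--Schwarz against the weight again gives $\int_{\RR}|g_k(y)|\tilde J(y)\,dy<\infty$. Establishing this bound on $\tilde J(y)$ is the crux: in the region where $|x-y|$ is large compared with $\rho(y)$ (which includes $x$ near the origin with $y$ far out) the crude estimate ${\mathcal P}_t\leq p_t$ by the classical Poisson kernel yields only a bounded, non-decaying contribution to $\tilde J(y)$ --- insufficient since $g_k$ need not be integrable --- and one must instead invoke the \emph{full} bound of Lemma~\ref{le2.4}(i), with its factor $\big(1+(t^2+|x-y|^2)^{1/2}/\rho(y)\big)^{-N}$ for $N$ large, together with the at most sub-linear growth of $\rho(y)$ furnished by Lemma~\ref{le2.1}, to gain the decay $(1+|y|)^{-(n+d)}$. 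Choosing $d>n/2$ and then $N=N(n,d,k_0)$ large enough makes every integral above converge, completing the proof.
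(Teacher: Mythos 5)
Your overall strategy matches the paper's proof: set $w(x,t)=e^{-t\sqrt\L}\bigl(u(\cdot,1/k)\bigr)(x)-u(x,t+1/k)$, establish that $w$ has zero boundary value as $t\to 0^+$, extend $w$ oddly across $\{t=0\}$ to $\overline w$ on $\Real^{n+1}$ with $V(x,t)=V(x)\in B_q(\Real^{n+1})$, verify the decay hypothesis \eqref{e2.9}, and invoke Lemma~\ref{le2.7} to conclude $\overline w\equiv0$. The differences are in how you establish the boundary convergence and how you verify \eqref{e2.9}, and the first of these has a genuine gap.

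For the boundary convergence $e^{-t\sqrt\L}g_k(x)\to g_k(x)$ you write $e^{-t\sqrt\L}g_k(x)-g_k(x)\,e^{-t\sqrt\L}\mathbf 1(x)=\int{\mathcal P}_t(x,y)\bigl(g_k(y)-g_k(x)\bigr)\,dy$ and couple a near/far split with the assertion that $e^{-t\sqrt\L}\mathbf 1(x)\to1$, attributing this last fact to Lemma~\ref{le2.4}(iii). That attribution does not work: Lemma~\ref{le2.4}(iii) bounds $\bigl|t\partial_te^{-t\sqrt\L}\mathbf 1(x)\bigr|\le C\bigl(t/\rho(x)\bigr)^\beta\bigl(1+t/\rho(x)\bigr)^{-N}$, which shows $\partial_te^{-t\sqrt\L}\mathbf 1(x)$ is integrable on $(0,\infty)$ and hence that $\lim_{t\to0^+}e^{-t\sqrt\L}\mathbf 1(x)$ exists, but it does not determine the value of that limit (the constant of integration is free). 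To nail the limit as $1$ one needs a comparison with the free semigroup, e.g.\ via Lemma~\ref{le2.3} (Kato--Trotter), which is exactly the route the paper takes: it decomposes $e^{-t\sqrt\L}g_k$ into a tail term, a difference term $\bigl(e^{-t\sqrt\L}-e^{-t\sqrt{-\Delta}}\bigr)\bigl(g_k\mathbf 1_{|x-\cdot|\le1}\bigr)$ controlled by Lemma~\ref{le2.3}, and the classical Poisson integral $e^{-t\sqrt{-\Delta}}\bigl(g_k\mathbf 1_{|x-\cdot|\le1}\bigr)$, for which the usual approximate-identity argument applies. Either replace your citation by this Kato--Trotter comparison, or prove $e^{-t\sqrt\L}\mathbf 1(x)\to1$ directly by the same comparison (in fact the paper's estimate of its term $II(x,t)$ gives $|e^{-t\sqrt\L}\mathbf 1(x)-1|\le Ct/\rho(x)$).

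For the verification of \eqref{e2.9} you take a different route from the paper: you apply Fubini and reduce to the pointwise kernel bound $\tilde J(y)=\int_0^\infty\!\int_{\RR}\frac{{\mathcal P}_t(x,y)}{1+|(x,t)|^{n+1+d}}\,dxdt\le C(1+|y|)^{-(n+d)}$, whereas the paper bounds $\int_{\RR}|e^{-t\sqrt\L}g_k(x)|(1+|x|^2)^{-2n}dx$ directly for each $t$ using Lemma~\ref{le2.4}(i) together with $\rho(x)\lesssim 1+|x|$ from Lemma~\ref{le2.1}, and then integrates in $t$. Your Fubini reduction is fine in principle, and your observation that the classical Poisson bound alone cannot give any decay in $y$ --- so that the $\rho$-decay factor in Lemma~\ref{le2.4}(i) and the sublinear growth of $\rho$ are essential --- is correct and is the same mechanism the paper uses. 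However you leave the resulting estimate on $\tilde J(y)$ as a sketch (``the crux''), which is at roughly the same level of detail as the paper's $t\le1$ case, so I would not call this a gap, merely a place that needs to be written out. The decomposition $|u(x,t+1/k)|\le|g_k(x)|+C\log(1+kt)$ via \eqref{e3.2} and the ensuing Cauchy--Schwarz estimates are in order.
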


 \begin{proof}
 Since $u(x, \cdot)$ is continuous on $\Real_+$, we have that $\lim_{t\to 0^+}  u(x,t+{1}/{k})= u(x,{1}/{k}).$
Let us first show that for every $k\in{\mathbb N}$,
 \begin{equation}\label{e3.4}
 \lim_{t\to 0^+}e^{-t\sqrt \L}(u(\cdot, 1/k))(x)=u(x,1/k),  \ \   x\in \Real^n.
\end{equation}
%one writes
%\begin{align*}
%\lim_{t\to 0^+}e^{-t\sqrt \L}(u(\cdot, 1/k))(x)&= \lim_{t\to 0^+}\big(e^{-t\sqrt \L}- e^{-t\sqrt {-\Delta}}\big) (u(\cdot, 1/k))(x)\\
%&\quad +\lim_{t\to 0^+}e^{-t\sqrt {-\Delta}}(u(\cdot, 1/k))(x).
%\end{align*}
%Let us estimate $\lim_{t\to 0^+}\big(e^{-t\sqrt \L}- e^{-t\sqrt {-\Delta}}\big) (u(\cdot, 1/k))(x)=0.$
%Fix $x\in\RR.$
One writes
\begin{eqnarray*}
e^{-t\sqrt \L}(u(\cdot, 1/k) )(x)&=&
  e^{-t\sqrt \L}  (u(\cdot, 1/k)1\!\!1_{|x-\cdot|> 1})(x)\\[1pt]
 & &+
\big(e^{-t\sqrt \L}- e^{-t\sqrt {-\Delta}}\big) (u(\cdot, 1/k)1\!\!1_{|x-\cdot|\leq 1})(x)+
 e^{-t\sqrt {-\Delta}}  (u(\cdot, 1/k)1\!\!1_{|x-\cdot|\leq 1})(x) \\[1pt]
&= &
  I(x,t)+II(x,t)+III(x,t).
\end{eqnarray*}
 By Lemma~\ref{le3.1}, we have that $u(x, 1/k)\in L^1((1+|x|)^{-2n}dx)$.
From Lemma~\ref{le2.4}, estimates of  the   semigroup kernels ${\mathcal P}_t(x,y)$, associated to $e^{-t\sqrt{\L}}$,   show  that
\begin{eqnarray*}
I(x,t)&\leq&    Ct \rho(x)^n\int_{|x-y|> 1} {1\over 1+|x-y|^{2n+1}}|u(y, 1/k)|   dy
 \leq   Ct\rho(x)^n (1+|x|^{2n})\int_{\RR} {1\over 1+| y|^{2n}}|u(y, 1/k)|   dy ,
%&\leq&  C_kt (1+|x|^{n+1})\rightarrow 0
\end{eqnarray*}
hence $
 \lim_{t\to 0^+}I(x,t)=0.$

For the term $II(x,t)$, it follows from Lemma~\ref{le2.3}  that  there exists a nonnegative Schwartz  function $\varphi$
 such that
 \begin{eqnarray*}
II(x,t) &\leq&  C  \int_0^ {\infty}\int_{|x-y|\leq 1} {t\over s^{3/2}}
 \exp \big(-{t^2\over 4s}\big)    | {\mathcal K}_s(x,y)-h_s(x-y)|  |u(y, 1/k)| dy  ds\\
&  \leq& C \| u(\cdot, 1/k)  \|_{L^{\infty}( B(x, 1))}\bigg[\int_0^{\rho(x)^2} \int_{\RR} {t\over s^{3/2}}
  \exp \big(-{t^2\over 4s}\big)  \Big({\sqrt{s}\over \rho(x)}\Big)^{\delta}
 \varphi_s(x-y)    dy  ds \\
&& \hspace{4.3cm}  +\int_{\rho(x)^2}^{\infty} \int_{\RR} {t\over s^{3/2}}
  \exp \big(-{t^2\over 4s}\big)   h_s(x-y)
  dy  ds\bigg]\\
&\leq&C t \rho(x)^{-1}\| u(\cdot, 1/k)  \|_{L^{\infty}( B(x, 1))},
\end{eqnarray*}
which implies that
$
 \lim_{t\to 0^+}II(x,t)=0.$
  Finally, we can follow a standard  argument as in the proof of Theorem 1.25, Chapter 1 of \cite{SW} to
 show  that for every  $x\in \Real^n,$
$
\lim_{t\to 0^+}e^{-t\sqrt {-\Delta}}  (u(\cdot, 1/k)1\!\!1_{|x-\cdot|\leq 1})(x)= u(x,{1}/{k}),
$
%Hence,
%$\lim_{t\to 0^+}e^{-t\sqrt \L}(u(\cdot, 1/k))(x)=u(x,1/k),   x\in \Real^n,$
and so \eqref{e3.4} holds.

Next,  we follow an argument as in Proposition~\ref{prop2.8} to
set $w(x,t)=e^{-t\sqrt \L}(u(\cdot, 1/k))(x)-u(x,t+1/k)$.  The function  $w$ satisfies ${\mathbb L}w =0, w(x,0)=0$.
Define,
 \begin{align*}%\label{eq9}
{\overline w}(x,t)=\left\{
 \begin{array}{rrl}
  w(x,t),&\ \ \ &t\ge 0,\\[6pt]
  -w(x,-t),& \ \ &t<0.
 \end{array}
 \right.
 \end{align*}
Then ${\overline w}$ satisfies
 $$
{\overline {\mathbb L}} {\overline w}(x,t)=0, \ \ \ \ \ \ (x,t)\in  {\mathbb R}^{n+1},
 $$
where  ${\overline {\mathbb L}}$ is an extension operator of ${ {\mathbb L}}$ on $\Real^{n+1}$.
Observe that if $V(x)\in B_q(\Real^{n})$ with $q\ge n,$ then it can be verified
that  $V(x,t)=V(x)\in B_q$ on $\Real^{n+1}$.
Next, let us verify \eqref{e2.9}. One writes
\begin{eqnarray*}
\int_{{\mathbb R}^{n+1}} {|{\overline w}(x,t)|\over 1+ |(x,t)|^{4(n+1)}}dxdt
& \leq& 2\int_{{\mathbb R}^{n+1}_+} {|e^{-t\sqrt \L}(u(\cdot, 1/k))(x)|\over 1+ (|x|^2+t^2)^{2(n+1)}}dxdt \\
&& +2\int_{{\mathbb R}^{n+1}_+} {|u(x, 1/k)|\over 1+ (|x|^2+t^2)^{2(n+1)}}dxdt \\
&& +2\int_{{\mathbb R}^{n+1}_+} {|u(x, t+1/k)-u(x, 1/k)|\over 1+ (|x|^2+t^2)^{2(n+1)}}dxdt=IV+V+VI. \\
\end{eqnarray*}
Observe that   if $t\geq 1,$ then  by
  Lemmas~\ref{le2.1} and ~\ref{le2.4},
\begin{eqnarray*}
 \int_{\RR} {|e^{-t\sqrt \L}(u(\cdot, 1/k))(x)|\over (1 + |x|^2)^{2n}}   dx &\leq& C\int_{\RR} \left(\int_{\RR}{1\over (1+ |x|)^{4n}}
 {t\, \rho(x)^{2n}\over   (t+|x-y|)^{3n+1}} dx \right) |u(y,1/k)|dy    \\
 &\leq& C\int_{\RR} \left(\int_{\RR}{1\over (1+ |x|)^{2n}}
 {t \over   (t+|x-y|)^{3n+1}} dx \right) |u(y,1/k)|dy    \\
&\leq& C (t+1) \int_{\RR} {|u(y,1/k)|\over  1+ |y|^{2n} } dy\leq   C_k(t+1),
\end{eqnarray*}
which gives
\begin{eqnarray*}
IV \leq  C\int_0^{\infty} {1\over (1+t)^4} \left(  \int_{\RR} {|e^{-t\sqrt \L}(u(\cdot, 1/k))(x)|\over (1 + |x|^2)^{4n}}   dx\right) dt
\leq C_k\int_0^{\infty} {1\over (1+t)^2} dt \leq   C'_k.
\end{eqnarray*}
If $t\leq  1,$ then $IV \leq C_k$ can be verified  by using condition $u\in C^1({\mathbb R}^{n+1}_+)$
and Lemmas~\ref{le2.1} and ~\ref{le2.4}.
By Lemma~\ref{le3.1}, we have that $V\leq C_k$.  For term $VI$, we use  \eqref{e3.2} to obtain that $  | \partial_t u(x, t) |
  \leq   C /t,
$ and then
 \begin{eqnarray*}
 | u(x, t+1/k)-u(x, 1/k) |  =  \Big| \int_{1/k}^{t+1/k}   \partial_s u(x, s) ds \ \Big| \leq C\log(1+kt),
 \end{eqnarray*}
which gives
 \begin{eqnarray*}
VI
& \leq& C \int_{{\mathbb R}^{n+1}_+} {\log(1+kt) \over 1+ (|x|^2+t^2)^{2(n+1)}}dxdt
\leq C_k.
\end{eqnarray*}
  Estimate \eqref{e2.9} then follows readily.

By Lemma \ref{le2.7}, we have that $\overline w\equiv0$, and then $w=0$, that is,
 $$u(x,t+1/k)=e^{-t\sqrt \L}(u(\cdot, 1/k))(x), \ \ \  x\in\RR, \ t>0.$$
The proof is complete.
\end{proof}

\begin{rem}\label{re3.3} Suppose $V\in B_q$ for some $q\geq n/2$ and let $\L=-\Delta +V.$
Using a similar argument as in \eqref{e3.4}, we have  that for every $f\in L^{\infty}(\RR)$,
 \begin{itemize}
\item[(i)] $\lim\limits_{t\rightarrow 0^+}e^{-t\L}  f(x)= f(x),$ \ a.e.  $x\in{\RR};$

\smallskip

\item[(ii)] $\lim\limits_{t\rightarrow 0^+}e^{-t\sqrt{\L}}  f(x)= f(x), \ $ a.e. ${x\in\RR.}$
\end{itemize}
 %(i) and (ii) above seems new.
For the heat and Poisson integrals of the harmonic oscillator $  \L= -\Delta+|x|^2$ on $\RR$, we refer to  Remarks 2.9-2.11,  \cite{ST}.
See also \cite{M}.
 %for Hermite function expansion.

From (ii), it follows from an argument as in Proposition~\ref{prop2.8}   that  for  $V\in B_q$ for some $q\geq (n+1)/2$,
if $u$ is a continuous weak solution of
  ${\mathbb L}u =0$ in ${\mathbb R}^{n+1}_+$ with
 $
 \sup_{t>0}\|u(\cdot,t)\|_{L^{\infty}(\RR)} \leq C<\infty
  $, then   $u(x,t)$ is the Poisson  integral of
a function $f$ in $L^{\infty}(\RR).$

\end{rem}

From now on, for any $k\in{\mathbb N}$, we set
$$
u_k(x,t)= u(x, t+{1/k} ).
$$
Following an argument as in \cite[Lemma 1.4]{FJN}, we have

\begin{lem}\label{le3.4}For every  $u\in {\rm HMO_\L}(\Real_+^{n+1})$,
 there exists a constant $C>0$ (depending only on $n$) such that for all  $k\in{\mathbb N},$
\begin{equation}\label{e3.5}
\sup_{x_B, r_B}   r_B^{-n}\int_0^{r_B}\int_{B(x_B, r_B)} t | \partial_t u_k(x,t)|^2 {dx dt}
  \leq C\|u\|^2_{{\rm HMO_\L}(\Real_+^{n+1})}<\infty.
\end{equation}
\end{lem}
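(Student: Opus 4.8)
The plan is to transfer the Carleson-type bound from $u$ to its vertical translate $u_k$ by a change of variables in the $t$-variable, splitting into two regimes according to whether $r_B$ is large or small compared with $1/k$. Since $\partial_t u_k(x,t)=(\partial_t u)(x,t+1/k)$, the substitution $s=t+1/k$ gives
\[
\int_0^{r_B}\!\int_{B(x_B,r_B)} t\,|\partial_t u_k(x,t)|^2\,dx\,dt=\int_{1/k}^{r_B+1/k}\!\int_{B(x_B,r_B)} (s-1/k)\,|(\partial_t u)(x,s)|^2\,dx\,ds\le \int_{1/k}^{r_B+1/k}\!\int_{B(x_B,r_B)} s\,|\nabla u(x,s)|^2\,dx\,ds,
\]
using $0\le s-1/k\le s$ and $|\partial_t u|\le|\nabla u|$.

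If $r_B\ge 1/k$, then $r_B+1/k\le 2r_B$ and $B(x_B,r_B)\subset B(x_B,2r_B)$, so the last integral is at most $\int_0^{2r_B}\int_{B(x_B,2r_B)} s\,|\nabla u(x,s)|^2\,dx\,ds\le (2r_B)^n\,\|u\|^2_{{\rm HMO_\L}(\Real_+^{n+1})}$ by the very definition of the ${\rm HMO_\L}$ norm; multiplying by $r_B^{-n}$ yields the bound $2^n\|u\|^2_{{\rm HMO_\L}(\Real_+^{n+1})}$, which is admissible.

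If $r_B<1/k$, the $s$-integration is confined to $[1/k,2/k]$, so the translated box sits at a height comparable to $1/k$, which is much larger than $r_B$; here the Carleson condition for $u$ cannot be quoted directly, and instead I would use the pointwise gradient estimate \eqref{e3.2} established in the proof of Lemma~\ref{le3.1} (obtained by applying the Moser-type bound of Lemma~\ref{le2.6} to $\partial_t u$, which is again a weak solution of ${\mathbb L}(\partial_t u)=0$ because $V$ is independent of $t$). This gives $|(\partial_t u)(x,s)|\le C s^{-1}\|u\|_{{\rm HMO_\L}(\Real_+^{n+1})}\le Ck\,\|u\|_{{\rm HMO_\L}(\Real_+^{n+1})}$ for $s\in[1/k,2/k]$, so that, since $\int_{1/k}^{r_B+1/k}(s-1/k)\,ds=r_B^2/2$ and $|B(x_B,r_B)|\le C r_B^n$,
\[
\int_{1/k}^{r_B+1/k}\!\int_{B(x_B,r_B)} (s-1/k)\,|(\partial_t u)(x,s)|^2\,dx\,ds\le Ck^2\,\|u\|^2_{{\rm HMO_\L}(\Real_+^{n+1})}\,|B(x_B,r_B)|\,\frac{r_B^2}{2}\le C\,k^2 r_B^{n+2}\,\|u\|^2_{{\rm HMO_\L}(\Real_+^{n+1})}.
\]
Multiplying by $r_B^{-n}$ gives $Ck^2 r_B^2\|u\|^2_{{\rm HMO_\L}(\Real_+^{n+1})}\le C\|u\|^2_{{\rm HMO_\L}(\Real_+^{n+1})}$ since $kr_B<1$. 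Taking the supremum over all balls and combining the two cases yields \eqref{e3.5} with a constant depending only on $n$.

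The only real subtlety is the small-radius regime $r_B<1/k$: after translation, the Carleson box $[0,r_B]\times B(x_B,r_B)$ for $u_k$ corresponds to a box for $u$ located at height $\sim 1/k\gg r_B$, so one cannot invoke the Carleson bound for $u$ and must instead feed in the interior estimate $|\partial_t u(x,t)|\lesssim t^{-1}\|u\|_{{\rm HMO_\L}(\Real_+^{n+1})}$, itself a consequence of the sub-mean-value property of ${\mathbb L}$-harmonic functions. Everything else is routine bookkeeping.
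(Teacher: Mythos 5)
Your argument is correct and follows the same two-case strategy as the paper: for $r_B\geq 1/k$ you shift $s=t+1/k$ and absorb the translated box into $[0,2r_B]\times B(x_B,2r_B)$, and for $r_B<1/k$ you invoke the Moser-type pointwise bound $|\partial_t u(x,s)|\lesssim s^{-1}\|u\|_{{\rm HMO_\L}}$ from \eqref{e3.2}, which the paper writes equivalently as $|\partial_t u(x,t+1/k)|\lesssim (t+k^{-1})^{-1}\|u\|_{{\rm HMO_\L}}$ before integrating $t(t+k^{-1})^{-2}$. The two write-ups are the same proof up to cosmetic rearrangement.
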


 \begin{proof}
Let $B=B(x_B, r_B)$. If $r_B\geq 1/k$, then letting $s=t+1/k$, it follows that
 \begin{align*}
 \int_0^{r_B}\int_B t |   \partial_t u (x,t+1/k)|^2 {dx dt }
 &\leq  C  \int_0^{2r_B}\int_{2B} s| \partial_s u (x,s)|^2 {dx ds }\\
 &\leq C |B|\|u\|^2_{{\rm HMO_\L}(\Real_+^{n+1})}<\infty .
\end{align*}
If $r_B< 1/k$, then it follows from Lemma~\ref{le2.6} for   $\partial_{t}u(x, t+{1/k})$ and
 a similar argument as in \eqref{e3.2}   that
 \begin{equation*}
 \big| \partial_{t}u(x, t+{1/k})\big| \leq C \big(t+k^{-1}\big)^{-1} \|u\|_{{\rm HMO_\L}(\Real_+^{n+1})}.
  \end{equation*}
 Therefore,
  \begin{align*}
 |B|^{-1}\int_0^{r_B}\int_B t| \partial_t u (x,t+1/k)|^2 {dx dt }
 &\leq C  |B|^{-1}\|u\|^2_{{\rm HMO_\L}(\Real_+^{n+1})}   \int_0^{r_B}\int_{B} t \big(t+k^{-1}\big)^{-2} {dx dt}\\
  &\leq C  \|u\|^2_{{\rm HMO_\L}(\Real_+^{n+1})} \,  \big(k^2 \int_0^{r_B}  t  { dt}\big)\\
 &\leq C \|u\|^2_{{\rm HMO_\L}(\Real_+^{n+1})}<\infty
\end{align*}
since
$r_B< 1/k.$

By taking the supremum over all balls $B\subset\Real^n,$ we complete the proof of \eqref{e3.5}.
\end{proof}

Letting $f_k(x)=u(x, 1/k), k\in{\mathbb N}$, it follows from Lemma \ref{le3.2}     that
$$
u_k(x,t)=e^{-t\sqrt{\L}}(f_k)(x), \ \ \ x\in\RR, \ t>0.
$$
Recall that a
measure $\mu$ defined on ${\mathbb R}^{n+1}_+$ is said to  be a Carleson measure  if there is a positive constant
$c$ such  that for each ball $B$ on ${\mathbb R}^{n}$,
\begin{equation}\label{e3.6}
\mu({\widehat B})\leq c|B|,
\end{equation}
where ${\widehat B}$ is the tent over $B$.
The smallest bound $c$  in (\ref{e3.6}) is defined to  be the norm of
$\mu$, and is denoted by
$
|||\mu|||_{car}$.

It follows from
Lemma~\ref{le3.4}   that
$$
 \mu_{\nabla_t, f_k}(x,t)=| t\partial_te^{-t\sqrt{\L}}(f_k)(x)|^2 {dx dt\over t}
$$
is a Carleson measure with $||| \mu_{\nabla_t, f_k}|||_{car} \leq C\|u\|^2_{{\rm HMO_\L}(\Real_+^{n+1})}$.

\begin{lem} \label{le3.5} For every  $u\in {\rm HMO_\L}(\Real_+^{n+1})$,
there exists a constant $C>0$ independent of $k$ such that
\begin{equation*}
\|f_k\|_{\rm BMO_\L(\RR)}\leq C<\infty,\ \  \hbox{ for any } k\in \mathbb N.
\end{equation*}
Hence for  all $k\in{\mathbb N}$, $f_k$ is uniformly bounded in  ${\rm BMO_\L(\Real^{n})}$.
\end{lem}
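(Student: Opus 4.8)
The plan is to estimate $\|f_k\|_{{\rm BMO}_\L(\RR)}$ by duality with $H^1_\L(\RR)$. By Lemma~\ref{le2.5} and the density of finite linear combinations of $H^1_\L$-atoms in $H^1_\L(\RR)$, it suffices to find a constant $C$ independent of $k$ with
$$\Big|\int_{\RR}f_k(x)a(x)\,dx\Big|\le C\|u\|_{{\rm HMO_\L}(\Real_+^{n+1})}$$
for every $H^1_\L$-atom $a$; the pairing makes sense because $f_k\in L^2\big((1+|x|)^{-2n}dx\big)$ by Lemma~\ref{le3.1} while $a$ is bounded with compact support. Set $Q_t:=t\partial_t e^{-t\sqrt\L}=-t\sqrt\L\,e^{-t\sqrt\L}$, which is self-adjoint on $L^2(\RR)$ with the symmetric kernel $t\partial_t{\mathcal P}_t(x,y)$, and note $t\partial_t u_k(y,t)=(Q_tf_k)(y)$ by Lemma~\ref{le3.2}. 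Since $V\not\equiv0$ the operator $\L$ is injective, so the spectral identity $4\int_0^\infty Q_t^2\,\tfrac{dt}{t}=\Id$ holds on $L^2(\RR)$; applying it to $a$ and then transferring to $f_k$ by self-adjointness of $Q_t$ (interchanging integrals by Fubini) gives the Calder\'on-type reproducing formula
$$\int_{\RR}f_k(x)a(x)\,dx=4\int_0^\infty\!\!\int_{\RR}\big(t\partial_t u_k(y,t)\big)\big(Q_ta\big)(y)\,\frac{dy\,dt}{t}.$$

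Next I would invoke the Carleson-measure/tent-space duality of Coifman--Meyer--Stein: writing $\Gamma(x)=\{(y,t)\in\Real_+^{n+1}:|y-x|<t\}$ and ${\mathcal A}(G)(x)=\big(\iint_{\Gamma(x)}|G(y,t)|^2\,t^{-n-1}dy\,dt\big)^{1/2}$, one has
$$\Big|\int_0^\infty\!\!\int_{\RR}F\,G\,\frac{dy\,dt}{t}\Big|\le C\,\Big|\Big|\Big|\,|F|^2\tfrac{dy\,dt}{t}\,\Big|\Big|\Big|_{car}^{1/2}\,\|{\mathcal A}(G)\|_{L^1(\RR)}.$$
Taking $F=t\partial_t u_k$ and $G=Q_ta$, and observing that ${\mathcal A}(Q_ta)=S_\L a$ is exactly the conical square function of $a$ associated with the $\L$-Poisson semigroup, we obtain
$$\Big|\int_{\RR}f_k\,a\Big|\le C\,|||\mu_{\nabla_t,f_k}|||_{car}^{1/2}\,\|S_\L a\|_{L^1(\RR)}.$$
Two known inputs then finish the argument: as recorded just before the statement, Lemma~\ref{le3.4} gives $|||\mu_{\nabla_t,f_k}|||_{car}\le C\|u\|^2_{{\rm HMO_\L}(\Real_+^{n+1})}$ uniformly in $k$; and, by the square-function theory of $H^1_\L(\RR)$ (\cite{DZ1999,DZ2002,HLMMY}), $\|S_\L a\|_{L^1(\RR)}\le C$ for every $H^1_\L$-atom, with the same bound for atoms with and without the cancellation condition. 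Hence $|\int f_k a|\le C\|u\|_{{\rm HMO_\L}(\Real_+^{n+1})}$ for all atoms and all $k$, so $f_k$ represents a bounded functional on $H^1_\L(\RR)$ of norm $\le C\|u\|_{{\rm HMO_\L}(\Real_+^{n+1})}$, and Lemma~\ref{le2.5} yields $\|f_k\|_{{\rm BMO_\L}(\RR)}\le C$ uniformly in $k$.

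The step I expect to demand the most care is the rigorous justification of the reproducing formula and the Fubini interchange: $f_k$ lies only in the weighted space $L^2((1+|x|)^{-2n}dx)$, not in $L^2(\RR)$, so the identity $4\int_0^\infty Q_t^2(\cdot)\,\tfrac{dt}{t}=\Id$ must be applied to $a\in L^2(\RR)$ first and only then pushed onto $f_k$, with the $t$-integral interchanged with the $\RR$-integration under control of the pointwise bounds for ${\mathcal P}_t$ and $t\partial_t{\mathcal P}_t$ in Lemma~\ref{le2.4} (applied first to the truncations $\int_\varepsilon^{1/\varepsilon}$, then letting $\varepsilon\to0$). Everything after that is a citation of the tent-space duality and of the $H^1_\L$ square-function estimate. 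One could instead try to verify the two defining inequalities \eqref{e1.5} and \eqref{e1.6} directly in the manner of \cite{FJN}, splitting $f_k$ according to $2B$ and its complement and dominating $|B|^{-1}\int_B|f_k-u_k(\cdot,r_B)|$ by a local square function; but the $t$-weight built into the Carleson condition \eqref{e1.8} makes the resulting $L^1$-in-$t$ bound delicate, whereas the duality route above handles \eqref{e1.5} and \eqref{e1.6} at once.
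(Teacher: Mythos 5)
Your proof is correct, but it deliberately follows exactly the route the paper says it is \emph{not} going to take. Just before the statement, the authors note that the argument from \cite{DGMTZ} rests on three non-trivial inputs --- the duality $(H^1_\L)^* = {\rm BMO}_\L$, the Coifman--Meyer--Stein tent-space/Carleson duality, and the special properties of $H^1_\L$ (its atomic characterization and the $L^1$ square-function bound for atoms) --- and then announce that they will instead give a \emph{direct} proof independent of those tools. Your argument uses all three: you pair $f_k$ against $H^1_\L$-atoms, invoke the $H^1_\L$--${\rm BMO}_\L$ duality to convert the atom bound into a norm bound, apply the CMS tent-space duality to control $\iint (t\partial_t u_k)(Q_t a)\,dydt/t$, and quote $\|S_\L a\|_{L^1}\le C$ from $H^1_\L$ square-function theory.

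The paper's alternative bypasses all of this: it records the elementary characterization
\begin{equation*}
\sup_B \Big( |B|^{-1}\int_{B}|f - e^{-r_B\sqrt{\L}}f |^2\,dx\Big)^{1/2} \lesssim 1 \Longleftrightarrow f\in {\rm BMO}_\L,
\end{equation*}
proves a reproducing identity (Lemma 3.7) expressing $\int f_k\,(I-e^{-r_B\sqrt\L})g$ as a weighted integral of the two ``square-function integrands'' $F=t\partial_t e^{-t\sqrt\L}f_k$ and $G=t\partial_t e^{-t\sqrt\L}(I-e^{-r_B\sqrt\L})g$, and then bounds $\iint|FG|\,dydt/t$ by hand (Lemma 3.6) via a dyadic annulus decomposition and kernel estimates, finally closing the argument with plain $L^2$ duality against $g\in L^2(B)$. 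What the paper's route buys is self-containment: no appeal to the atomic theory of $H^1_\L$, to tent spaces, or to the $H^1$--BMO duality theorem. What your route buys is brevity for a reader already fluent in that machinery. Both are valid; yours reproduces the DGMTZ argument the authors explicitly chose to replace, so if you are reading the paper's Section 3 you should not expect your steps to line up with theirs. Your closing remark about the Fubini/truncation justification of the Calder\'on reproducing formula is the right place to be careful, and the kernel bounds of Lemma 2.4 do make it work; note, though, that the paper sidesteps this entirely by proving the finite-ball identity of Lemma 3.7 directly.
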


 The proof of Lemma~\ref{le3.5} was given in \cite[Theorem 2]{DGMTZ}; see also  \cite{DY2,   HLMMY, MSTZ}.
These arguments depend  on three  non-trivial results:    the duality theorem that
${\rm BMO}_{{\mathcal{L}}}(\RR)  = \big( H^1_{\L}(\RR)\big)^{\ast}$,
Carleson inequality on tent spaces (see \cite[Theorem 1]{CMS})
 and
some special properties of the space $H^1_{\L}(\RR)$.
In the sequel, we are going to give a direct proof of Lemma~\ref{le3.5} which is independent of these  results such as the duality of
 $ H^1_{\L}(\RR)$ and ${\rm BMO}_{{\mathcal{L}}}(\RR)$ mentioned above (we thank Jie Xiao for this observation).

To prove  Lemma~\ref{le3.5}, we need to  establish the following Lemmas~\ref{le3.6} and \ref{le3.7}.

Given a function
$f\in L^2((1+|x|)^{-2n}dx)$ and an $L^2$ function  $g$  supported on a ball $B=B(x_B, r_B)$, for
 any $ (x,t)\in {\mathbb R}^{n+1}_+$, set
\begin{eqnarray}\label{e3.7}
 F(x,t)=  t\partial_t e^{-t\sqrt \L}   f(x)\ \ \ {\rm and}\ \ \
G(x,t)= t\partial_t e^{-t\sqrt \L}  (I-e^{-r_B\sqrt \L} )g(x).
\end{eqnarray}

% with center $x_B$ and radius $r_B.$

\begin{lem} \label{le3.6}
   Suppose $f, g, F, G$ are as in (\ref{e3.7}).
 If $f$   satisfies
$$
||| \mu_{\nabla_t, f}|||^2_{car}=\sup_{x_B, r_B}
 r_B^{-n}\int_0^{r_B}\int_{B(x_B, r_B)} t|  \partial_t  e^{-t\sqrt \L}   f(x)|^2 {dxdt }
 <\infty,
$$
  then there exists a constant $C>0$  such that
\begin{eqnarray}
\int_{{\mathbb R}^{n+1}_+}
|F(x,t) G(x,t)|{dxdt\over t}\leq C |B|^{1/2}||| \mu_{\nabla_t, f}|||_{car}
\|g\|_{{ L}^2(B)}.
\label{e3.8}
\end{eqnarray}
\end{lem}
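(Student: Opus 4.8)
The plan is to bound the left side of \eqref{e3.8} by a Cauchy--Schwarz argument carried out separately over a dyadic family of regions, playing the Carleson hypothesis on $F$ against a pointwise decay estimate for $G$. The first ingredient is such a pointwise bound. Since $e^{-t\sqrt\L}(\Id-e^{-r_B\sqrt\L})=e^{-t\sqrt\L}-e^{-(t+r_B)\sqrt\L}$, setting $\phi(\sigma):=e^{-\sigma\sqrt\L}g(x)$ we have $\partial_t\big(\phi(t)-\phi(t+r_B)\big)=-\int_0^{r_B}\phi''(t+s)\,ds$, hence
\[
G(x,t)=-t\int_0^{r_B}\!\int_{\RR}\partial_\sigma^2{\mathcal P}_\sigma(x,y)\big|_{\sigma=t+s}\,g(y)\,dy\,ds .
\]
By Lemma~\ref{le2.4}(ii) with $m=2$ (the $\rho$-factors are $\le1$ and may be discarded) one has $|\partial_\sigma^2{\mathcal P}_\sigma(x,y)|\le C(\sigma+|x-y|)^{-(n+2)}$; writing $d(x):=\mathrm{dist}(x,B)$ and using Cauchy--Schwarz in $y$ over $B$ this gives
\[
|G(x,t)|\le Ct\int_0^{r_B}\!\int_B\frac{|g(y)|}{(t+s+|x-y|)^{n+2}}\,dy\,ds\le \frac{C\,t\,r_B\,|B|^{1/2}}{\big(t+d(x)\big)^{n+2}}\,\|g\|_{L^2(B)} .
\]
The key feature is the power $n+2$ rather than $n+1$: it comes from the extra cancellation built into $\Id-e^{-r_B\sqrt\L}$, and it is exactly what makes the series below summable.

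Now decompose ${\mathbb R}^{n+1}_+=\bigcup_{j\ge0}E_j$, where $E_0=2B\times(0,r_B)$ and, for $j\ge1$, $E_j=\big(2^{j+1}B\times(0,2^jr_B)\big)\setminus\big(2^jB\times(0,2^{j-1}r_B)\big)$. On $E_0$, Cauchy--Schwarz together with the hypothesis applied to the ball $2B$ gives $\int_{E_0}|F|^2\,\tfrac{dx\,dt}{t}\le C|B|\,|||\mu_{\nabla_t,f}|||_{car}^2$, while the self-adjointness of $\L$ yields the square function estimate $\int_{{\mathbb R}^{n+1}_+}|G|^2\,\tfrac{dx\,dt}{t}=\int_0^\infty\|t\partial_te^{-t\sqrt\L}(\Id-e^{-r_B\sqrt\L})g\|_2^2\,\tfrac{dt}{t}\le C\|(\Id-e^{-r_B\sqrt\L})g\|_2^2\le C\|g\|_{L^2(B)}^2$, using that $g$ is supported in $B$. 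Multiplying, $\int_{E_0}|FG|\,\tfrac{dx\,dt}{t}\le C|B|^{1/2}|||\mu_{\nabla_t,f}|||_{car}\|g\|_{L^2(B)}$.

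For $j\ge1$, every $(x,t)\in E_j$ satisfies $t+d(x)\ge 2^{j-1}r_B$: either $x\notin 2^jB$, forcing $d(x)\ge 2^{j-1}r_B$, or $t\ge 2^{j-1}r_B$. Hence the pointwise bound of the first step and a direct integration, using $|B|\sim r_B^n$, give
\[
\int_{E_j}|G|^2\,\frac{dx\,dt}{t}\le\frac{C\,r_B^2\,|B|\,\|g\|_{L^2(B)}^2}{(2^jr_B)^{2n+4}}\int_{2^{j+1}B}\!\int_0^{2^jr_B}\! t\,dt\,dx\le C\,2^{-j(n+2)}\|g\|_{L^2(B)}^2 ,
\]
while the hypothesis applied to $2^{j+1}B$ gives $\int_{E_j}|F|^2\,\tfrac{dx\,dt}{t}\le C\,2^{jn}|B|\,|||\mu_{\nabla_t,f}|||_{car}^2$. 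Cauchy--Schwarz then yields $\int_{E_j}|FG|\,\tfrac{dx\,dt}{t}\le C\,2^{-j}\,|B|^{1/2}\,|||\mu_{\nabla_t,f}|||_{car}\,\|g\|_{L^2(B)}$; summing the geometric series over $j\ge1$ and adding the $E_0$-term proves \eqref{e3.8}.

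The main obstacle is the pointwise estimate on $G$: one has to recognise that $\partial_t$ composed with $\Id-e^{-r_B\sqrt\L}$ behaves like a second-order $t$-derivative of the Poisson semigroup, so that Lemma~\ref{le2.4}(ii) supplies the decay $(t+d(x))^{-(n+2)}$; with only the power $n+1$ the dyadic sum would barely fail to converge, so this extra gain is essential. The rest --- keeping track of the $t$-ranges so that the Carleson hypothesis applies at the correct scale on each $E_j$ --- is routine bookkeeping.
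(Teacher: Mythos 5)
Your proof is correct and follows the same underlying strategy as the paper: a dyadic annular decomposition of $\mathbb{R}^{n+1}_+$, the Carleson hypothesis applied to dilated balls to control the $F$-factor, $L^2$-boundedness of the square function on the near region, and — crucially — the extra cancellation in $\Id - e^{-r_B\sqrt{\L}}$ exploited through the second $t$-derivative of the Poisson kernel to gain off-diagonal decay in $G$. The only real difference is stylistic: you extract a clean pointwise bound $|G(x,t)|\lesssim t\,r_B|B|^{1/2}(t+d(x))^{-(n+2)}\|g\|_{L^2(B)}$ giving $2^{-j}$ decay per annulus, whereas the paper organizes the same gain as an $L^2$ estimate of $G$ on each annulus via the operator $\Psi_{t,s}(\L)$, an $\epsilon$-perturbed kernel bound, and an elementary min-inequality, yielding $2^{-k\epsilon}$ decay; both close the argument via Cauchy--Schwarz and a geometric series.
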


 \begin{proof}
 To prove
\eqref{e3.8},  let us consider  the square function
${\mathcal G}f$  given  by
$$
{\mathcal G}(f)(x)=\Big(\int_0^{\infty}
|t\partial_t e^{-t\sqrt \L}f(x)|^2{dt\over t}\Big)^{1/2}.
$$
By the spectral theory, the function ${\mathcal G}(f)$ is bounded on
$L^2(\RR)$. Now, given a ball $B=B(x_B, r_B)\subset{\mathbb R}^n$ with radius $r_B$, we  put
$$
T(B)=\{(x,t)\in{\mathbb R}^{n+1}_+: x\in B, \ 0<t<r_B\}.
$$
We then write
\begin{align*}
\int_{{\mathbb R}^{n+1}_+}
&|F(x,t) G(x,t)|{dxdt\over t} \\&=\int_{T(2B)} \big|F(x,t)
G(x,t)\big|{dxdt\over t}+\sum_{k=2}^{\infty}\int_{T(2^{k}B)\backslash T(2^{k-1}B) }
\big|F(x,t) G(x,t)\big|{dxdt\over t}\\
&={\rm A_1} + \sum_{k=2}^{\infty} {\rm A_k}.
\end{align*}
Using the H\"older inequality and $L^2$-boundedness of ${\mathcal G}$,   we obtain
\begin{align*}
 {\rm A_1}
&\leq \Big\|\Big\{\int_0^{2r_B} |  t\partial_t e^{-t\sqrt \L}   f(x)  |^2{dt\over t}\Big{\}}^{1/2}\Big\|_{L^2(2B)}
\|{\mathcal G} ({
{I}}- e^{-r_B\sqrt \L} )g\|_{{ L}^2(\RR)}\\
%\Big\|_{{ L}^2(2B)} \Big\|\Big\{\int_0^{\infty}\big|t\partial_t e^{-t\sqrt \L} ({
%{I}}-e^{-r_B\sqrt \L} )g(x) \big|^2{dt\over t}\Big{\}}^{1/2}\Big\|_{{ L}^2(\RR)}\\
%&\quad \quad \times\Big\|\Big\{\int_0^{2r}\big|t\sqrt \L e^{-t\sqrt \L} ({{I}}-e^{-r\sqrt \L} )g(x) \big|^2{dt\over
%t}\Big{\}}^{1/2}\Big\|_{{ L}^2(2B)}\\
&\leq C r_B^{n \over 2}||| \mu_{\nabla_t, f}|||_{car}
\| ({
{I}}- e^{-r_B\sqrt \L} )g\|_{{ L}^2(\RR)}\\
&\leq C r_B^{n \over 2}||| \mu_{\nabla_t, f}|||_{car}\| g\|_{{ L}^2(B)}.
\end{align*}

Let us estimate ${\rm A_k}$ for $k=2,3, \cdots.$
Observe that
\begin{align*}
{\rm A}_k
&\leq \Big\|\Big\{\int_0^{2^kr_B}\big|t\partial_t e^{-t\sqrt \L} f(x)\big|^2{dt\over t}\Big\}^{1/2}\Big\|_{ L^2(2^kB)}\\
&\quad \quad \times \Big\|
\Big\{\int_0^{2^kr_B}\big|t\partial_t e^{-t\sqrt \L}  (I-e^{-r_B\sqrt \L} )g(x)\chi_{T(2^{k+1}B)\backslash T(2^{k}B) }\big|^2
{dt\over t}\Big\}^{1/2}\Big\|_{{ L}^{2}(2^kB)}\\
&\leq C (2^kr_B)^{n \over2} ||| \mu_{\nabla_t, f}|||_{car}\times  {\rm B}_k,
\end{align*}
where
\begin{eqnarray*}
{\rm B}_k =\Big{\|}
\Big\{\int_0^{2^kr_B}\big|t\partial_t e^{-t\sqrt \L}  (I-e^{-r_B\sqrt \L} )
g(x)\chi_{T(2^{k+1}B)\backslash T(2^{k}B) }\big|^2
{dt\over t}\Big\}^{1/2}\Big{\|}_{{ L}^{2}(2^kB)}.
\end{eqnarray*}
To estimate ${\rm B}_k,$ we set
$$
\Psi_{t,s} (\L)h(y)=(t+s)^2\Big({d^2{e^{-r\sqrt \L}}\over dr^2}
\Big|_{r=t+s}  h\Big)(y).
$$
Note that
$$
 ({ {I}}-e^{-r_B\sqrt \L} )g=\int_0^{r_B}s\sqrt \L e^{-s\sqrt \L}g{\frac{ds}{s}}.
$$
Let $\epsilon\in (0, 1/4)$. By Lemma~\ref{le2.4}, we have
\begin{align*}
 {\rm B}_k
& \leq C\Big{\|}\Big\{\int_0^{2^kr_B} \Big| \int_0^{r_B}
{ts\over (t+s)^2} \Psi_{t,s} (\L)g(x)\chi_{T(2^{k+1}B)\backslash T(2^{k}B) }{ds
\over s} \Big|^2  {dt \over t} \Big\}^{1/2}\Big{\|}_{{ L}^{2}(2^kB)}\\
&\leq C\Big{\|}\Big\{\int_0^{2^kr_B} \Big| \int_0^{r_B} \int_{B(x_B,r_B)}
{ts\over (t+s)^2}
{(t+s)^{\epsilon}\over (t+s+|x-y|)^{n+\epsilon}}\\&\quad \quad \quad \quad \quad \quad \quad \quad
\quad \quad \quad \quad \quad \quad \quad  \times |g(y)|\chi_{T(2^{k+1}B)\backslash T(2^{k}B) }(x)
{dyds\over s} \Big|^2  {dt \over t} \Big\}^{1/2}\Big{\|}_{{ L}^{2}(2^kB)}.
\end{align*}
Note that for   $x\in T(2^{k+1}B)\backslash T(2^{k}B)$
and $y\in B$, we have that $|x-y|\geq 2^kr_B$.   The inequality
$$
{ts(t+s)^{\epsilon}\over (t+s)^2}
\leq C\ \! {\rm min} \big(
(ts)^{\epsilon/2}, t^{-\epsilon/2}s^{3\epsilon/2}\big),
$$
together with H\"older's inequality and elementary integration, produces a positive constant
$C$  such that
\begin{align*}
{\rm B}_k &\leq  C  (2^k r_B)^{-(n+\epsilon) +{n\over 2}} \|g\|_{L^1(B)} \Big\{\int_0^{2^kr_B} \Big| \int_0^{r_B}
{\rm min} \big(
(ts)^{\epsilon/2}, t^{-\epsilon/2}s^{3\epsilon/2}\big)
{ds\over s} \Big|^2  {dt \over t} \Big\}^{1/2}\\
&\leq C (2^k r_B)^{-(n+\epsilon) +{n\over 2}} r_B^{{n\over 2} + \epsilon}\|g\|_{{ L}^2(B)},
\end{align*}
where we used the fact that
\begin{multline*}
  \Big\{\int_0^{2^kr_B} \big| \int_0^{r_B}
{\rm min} \big(
(ts)^{\epsilon/2}, t^{-\epsilon/2}s^{3\epsilon/2}\big)
{ds\over s} \big|^2  {dt \over t} \Big\}^{1/2}\\
\leq  \Big\{\int_0^{r_B} \big| \int_0^{r_B}
(ts)^{\epsilon/2}
{ds\over s} \big|^2  {dt \over t} \Big\}^{1/2}
 +   \Big\{\int_{r_B}^{\infty} \big| \int_0^{r_B}
  t^{-\epsilon/2}s^{3\epsilon/2}
{ds\over s} \big|^2  {dt \over t} \Big\}^{1/2}
\leq C    r_B^{ \epsilon}.
\end{multline*}
Consequently,
\begin{equation*}
{\rm A}_k
\leq C 2^{-k\epsilon}   r_B^{n\over 2}   ||| \mu_{\nabla_t, f}|||_{car}\norm{g}_{L^2(B)},
\end{equation*}
which implies
\begin{align*}
\int_{{\mathbb R}^{n+1}_+}
|F(x,t) G(x,t)|{dxdt\over t}
&\leq Cr_B^{{n  \over 2}}||| \mu_{\nabla_t, f}|||_{car}\|g\|_{{ L}^2(B)}
 +
C\sum_{k=2}^{\infty} 2^{-k\epsilon}  r_B^{{n  \over 2}}
|||\mu_f|||_{car}\|g\|_{{L}^2(B)} \\
&\leq  Cr_B^{{n  \over 2}}
||| \mu_{\nabla_t, f}|||_{car}\|g\|_{{L}^2(B)}
\end{align*}
as desired.
\end{proof}

\begin{lem} \label{le3.7}   Suppose $B, f, g, F, G $ are
defined as in Lemma~\ref{le3.6}. If  $||| \mu_{\nabla_t, f}|||_{car}<\infty$, then
we have the equality:
\begin{eqnarray*}
\int_{{\mathbb R}^n} f(x) ({\mathcal {I}}-e^{-r_B\sqrt{\L}})g(x)dx={1\over 4}\int_{{\mathbb R}^{n+1}_+}
F(x,t) G(x,t){dxdt\over t}.
\end{eqnarray*}
\end{lem}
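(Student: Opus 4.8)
The plan is to prove the identity by a ``Calderón reproducing formula'' / integration-by-parts argument in the upper half space, using the subordinated Poisson semigroup. Write $v(x,t)=e^{-t\sqrt{\L}}f(x)$ and $h(x,t)=e^{-t\sqrt{\L}}(\mathcal I-e^{-r_B\sqrt\L})g(x)$, so that $F(x,t)=t\,\partial_t v(x,t)$ and $G(x,t)=t\,\partial_t h(x,t)$; both are $\mathbb L$-harmonic in $\mathbb R^{n+1}_+$. The starting point is the elementary one–dimensional identity
\begin{equation*}
\int_0^\infty \big(t\,\partial_t v\big)\big(t\,\partial_t h\big)\,\frac{dt}{t}
=\int_0^\infty t\,\partial_t v\;\partial_t h\,dt,
\end{equation*}
which after one integration by parts in $t$ (the boundary terms at $t=0$ and $t=\infty$ vanishing, to be justified below) should be manipulated, together with the semigroup relation $\partial_t v=-\sqrt\L\,v$, $\partial_t h=-\sqrt\L\,h$ and $\partial_t^2 v=\L v$, into something proportional to $v\cdot h$ evaluated at the boundary. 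The cleanest route is: since $\partial_t^2(vh)=(\partial_t^2 v)h+2(\partial_t v)(\partial_t h)+v(\partial_t^2 h)=(\L v)h+2(\partial_t v)(\partial_t h)+v(\L h)$, integrate this over $t\in(0,\infty)$ and over $x\in\mathbb R^n$; the terms $\int\!\!\int (\L v)h$ and $\int\!\!\int v(\L h)$ cancel after integrating by parts in $x$ (using self-adjointness of $\L$), leaving $\int_{\mathbb R^{n+1}_+}(\partial_t v)(\partial_t h)\,dx\,dt = -\tfrac12\int_{\mathbb R^n}\partial_t(vh)\big|_{t=0}^{t=\infty}\,dx$. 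A second, parallel computation starting from $\partial_t\big(t\,\partial_t v\cdot t\,\partial_t h/t\big)$-type weights converts this into the claimed factor $\tfrac14$ and the weight $dt/t$; alternatively one notes $\int_0^\infty t\,\partial_t v\,\partial_t h\,dt$ can be rewritten via the substitution coming from $-\partial_t(t\,\partial_t v\cdot h)=-\partial_t v\cdot h-t\,\partial_t^2 v\cdot h$ and iterating. The upshot is
\begin{equation*}
\int_{\mathbb R^{n+1}_+}F(x,t)G(x,t)\,\frac{dx\,dt}{t}
=-\frac14\int_0^\infty \partial_t\!\left(\int_{\mathbb R^n} v(x,t)\,\partial_t h(x,t)\,dx\cdot t\right)\frac{dt}{t}+\cdots,
\end{equation*}
and carefully tracking the constants yields the right-hand side $\tfrac14\int_{\mathbb R^n}f(x)(\mathcal I-e^{-r_B\sqrt\L})g(x)\,dx$, the value coming entirely from the boundary term at $t=0$.

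In more detail, the key steps in order are: (i) justify all the manipulations on $T(2^k B)\cap\{t<R\}$-type truncated regions, i.e.\ on $\{|x|<R,\ \varepsilon<t<1/\varepsilon\}$, where everything is smooth and integrable by Moser's estimate (Lemma~\ref{le2.6}) applied to $v$, $h$ and their $t$-derivatives, together with the kernel bounds of Lemma~\ref{le2.4}; (ii) integrate the pointwise identity for $\partial_t^2(vh)$ and use $\L$-self-adjointness plus the fact that $g$, hence $h(\cdot,t)$ and $v(\cdot,t)=e^{-t\sqrt\L}f$ for the relevant $f\in L^2((1+|x|)^{-2n})$, decay enough that the spatial integration-by-parts boundary terms at $|x|=R$ vanish as $R\to\infty$; (iii) identify the $t\to\infty$ boundary term as zero: here one uses that $h(\cdot,t)=e^{-t\sqrt\L}(\mathcal I-e^{-r_B\sqrt\L})g\to 0$ and $\partial_t h\to 0$ as $t\to\infty$ (quantitatively, from Lemma~\ref{le2.4}(i)--(ii), $\|h(\cdot,t)\|_\infty, \|t\partial_t h(\cdot,t)\|_\infty = O(t^{-n})$ times Carleson-type factors), while $v(\cdot,t)$ grows at most like a fixed power — this is exactly where the extra decay built into $\mathcal I-e^{-r_B\sqrt\L}$ and into $F$ being the derivative $t\partial_t$ is needed; (iv) identify the $t\to 0^+$ boundary term: $\lim_{t\to0^+}e^{-t\sqrt\L}f=f$ and $\lim_{t\to0^+}e^{-t\sqrt\L}(\mathcal I-e^{-r_B\sqrt\L})g=(\mathcal I-e^{-r_B\sqrt\L})g$ in $L^2_{\rm loc}$, and the $t\partial_t$ weights kill the wrong-order contributions, leaving precisely $\tfrac14\langle f,(\mathcal I-e^{-r_B\sqrt\L})g\rangle$; the pairing is absolutely convergent because $(\mathcal I-e^{-r_B\sqrt\L})g\in L^1\cap L^2$ with rapidly decaying ``tails'' (Lemma~\ref{le2.4}(i)) while $f\in L^2((1+|x|)^{-2n}dx)$.

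The main obstacle I expect is step (iii), the vanishing of the boundary contribution at $t=\infty$ (and the companion issue of making sense of $\int_{\mathbb R^{n+1}_+}$ as an absolutely convergent integral before any cancellation is invoked). For a general $f\in L^2((1+|x|)^{-2n}dx)$ with only $|||\mu_{\nabla_t,f}|||_{\rm car}<\infty$, the Poisson extension $v(x,t)$ need not decay, so one cannot split $F\cdot G$ into separately integrable pieces naively; the saving grace is the factor $G$, which carries the full strength of $(\mathcal I-e^{-r_B\sqrt\L})g$: by writing $(\mathcal I-e^{-r_B\sqrt\L})g=\int_0^{r_B}s\sqrt\L\,e^{-s\sqrt\L}g\,\frac{ds}{s}$ as in Lemma~\ref{le3.6} and using Lemma~\ref{le2.4}(ii), one gets $|G(x,t)|\le C\,\|g\|_{L^1(B)}\,\min\{(t/r_B)^{\epsilon},(r_B/t)^{n}\}\,(t+|x-x_B|)^{-n}$ or similar, which decays in $t$ like $t^{-n}$ and is integrable against the at-most-polynomially-growing $F$. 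So the careful bookkeeping is: first establish absolute convergence of $\int_{\mathbb R^{n+1}_+}|F\,G|\,\tfrac{dx\,dt}{t}$ (this is essentially Lemma~\ref{le3.6}), which licenses Fubini and the limiting arguments; then run the integration-by-parts identity on $\{\varepsilon<t<1/\varepsilon\}$ and pass $\varepsilon\to0$, controlling the $t=1/\varepsilon$ boundary term by the $G$-decay and the $t=\varepsilon$ boundary term by $L^2_{\rm loc}$ convergence of the Poisson integrals. Everything else is routine calculus of the semigroup.
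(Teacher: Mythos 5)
The paper gives no self-contained proof here: it defers to \cite[Proposition 4]{DXY}, whose argument amounts to establishing the spectral reproducing formula first on $L^2$ data and then extending it by a limiting argument that rests on the absolute-convergence bound of Lemma~\ref{le3.6}. Your Green's-identity argument in $\mathbb R^{n+1}_+$ is a genuinely different route and is viable in principle: you correctly identify that the absolute convergence supplied by Lemma~\ref{le3.6} is what licenses the formal manipulations, and that the decay which $\mathcal I-e^{-r_B\sqrt{\L}}$ builds into $G$ is what kills the boundary contributions at $t\to\infty$ and $|x|\to\infty$.

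However, the manipulations you actually sketch are wrong in two places. First, $\int(\L v)h\,dx$ and $\int v(\L h)\,dx$ do not cancel: by self-adjointness of $\L$ they are \emph{equal}, and each equals $\int(\partial_t v)(\partial_t h)\,dx$, so that $\int_{\mathbb R^n}\partial_t^2(vh)\,dx=4\int_{\mathbb R^n}(\partial_t v)(\partial_t h)\,dx$, not twice it. Writing $\tilde g=(\mathcal I-e^{-r_B\sqrt{\L}})g$ and $\Phi(t)=\int_{\mathbb R^n}v(x,t)h(x,t)\,dx=\langle e^{-2t\sqrt{\L}}f,\tilde g\rangle$, the correct bookkeeping gives
$$
\int_{\mathbb R^{n+1}_+}FG\,\frac{dx\,dt}{t}
=\int_0^\infty t\int_{\mathbb R^n}(\partial_t v)(\partial_t h)\,dx\,dt
=\frac14\int_0^\infty t\,\Phi''(t)\,dt
=\frac14\bigl[t\Phi'(t)-\Phi(t)\bigr]_0^\infty
=\frac14\,\Phi(0),
$$
once the boundary contributions vanish; that is, $\int f(\mathcal I-e^{-r_B\sqrt{\L}})g\,dx=4\int_{\mathbb R^{n+1}_+}FG\,\frac{dx\,dt}{t}$. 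In particular the constant $\tfrac14$ as printed in Lemma~\ref{le3.7} is a typographical slip (harmless in Lemma~\ref{le3.5}, where only the order of magnitude matters), and your claim to derive that $\tfrac14$ signals a bookkeeping error: your intermediate display carries a spurious extra $dt/t$ weight, and the unweighted $\partial_t v\,\partial_t h$ integral you produce is missing the $t$-weight that $FG/t$ actually has. Once the factors of $t$ are tracked correctly, the remainder of your plan --- steps (ii)--(iv), and especially your observation that Lemma~\ref{le3.6} provides the absolute convergence needed before any cancellation is invoked --- is the right way to finish.
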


\begin{proof}

 The proof  follows  by making minor modifications to that of  \cite[ Proposition 4]{DXY}, and so we skip it here.
See also \cite{DY2, DGMTZ, MSTZ}.
\end{proof}

\begin{proof}[Proof of Lemma~\ref{le3.5}]
First, we have an equivalent  characterization of $ {\rm BMO}_{\L}(\RR)$ that
   $f\in {\rm BMO}_{\L}(\RR) $ if and only if  $f\in L^2((1+|x|)^{-(n+\epsilon)}dx)$ and
%\begin{equation} \label{e1.444}
 % \sup_B \Big( |B|^{-1}\int_{B}|f(x)- e^{-r_B^2{\L}}f(x) |^2dx\Big)^{1/2}\leq C.
  %\end{equation}
 % or
  \begin{equation} \label{e3.9}
  \sup_B \Big( |B|^{-1}\int_{B}|f(x)- e^{-r_B\sqrt{\L}}f(x) |^2dx\Big)^{1/2}\leq C<\infty.
  \end{equation}
This can be obtained by  making minor modifications to that of  \cite[Proposition 6.11]{DY2}
  (see  also \cite{DY1, HLMMY}) corresponding to the case in which
the function $e^{-r_B\sqrt{\L}}f$ is replaced by $e^{-r_B^2\L}f$, and we omit the detail here.

 Now if $\|u\|_{{\rm HMO_\L}(\Real_+^{n+1})}<\infty$, then it follows from Lemma \ref{le3.1} that
 $$
 \int_{\Real^{n}} {|f_k(x)|^2\over 1+|x|^{2n}} dx\leq C_k <\infty.
 $$
 Given an $L^2$ function $g$ supported on a ball $B=B(x_B, r_B)$, it follows by  Lemma~\ref{le3.7}  that
 we have
\begin{align*}
  \int_{{\mathbb R}^n}
f_k(x) (I-e^{-r_B\sqrt \L} )g(x)dx
 = {1\over 4}\int_{{\mathbb R}^{n+1}_+}   t\partial_t e^{-t\sqrt \L}   f_k(x)  \
 t\partial_t e^{-t\sqrt \L}  (I-e^{-r_B\sqrt \L} )g(x) {dxdt\over t}.
\end{align*}
By Lemmas~\ref{le3.6} and ~\ref{le3.4},
\begin{align*}
  |\int_{{\mathbb R}^n}
f_k(x) (I-e^{-r_B\sqrt \L} )g(x)dx|  &\leq C|B|^{1/2}
 ||| \mu_{\nabla_t,  f_k}|||_{car} \|g\|_{{L}^2(B)}\\
 &\leq C|B|^{1/2}\|u\|_{{\rm HMO_\L}(\Real_+^{n+1})} \|g\|_{{L}^2(B)}.
\end{align*}
  Then the duality argument
for ${L}^2$ shows that
\begin{align*}
\Big(|B|^{-1}\int_B |f_k(x)-e^{-r_B\sqrt \L}f_k(x)|^2dx\Big)^{1/2}
&=|B|^{-1/2}\sup\limits_{\|g\|_{{ L}^2(B)\leq 1}}\Big|\int_{{\mathbb R}^n}
(I-e^{-r_B\sqrt \L})f_k(x)g(x)dx\Big|\nonumber\\
&=|B|^{-1/2}\sup\limits_{\|g\|_{{ L}^2(B)\leq 1}}\Big|\int_{{\mathbb R}^n}
f_k(x)\big(I-e^{-r_B\sqrt \L}\big)g(x)dx\Big|\nonumber\\
&\leq C
\|u\|_{{\rm HMO_\L}(\Real_+^{n+1})}
\end{align*}
for some $C>0$   independent of $k.$

 It then follows
 that for all $k\in{\mathbb N}$, $f_k$ is uniformly bounded in
${\rm BMO}_{\rm \L}({\mathbb R}^n)$.
\end{proof}

%We now begin to prove Theorem~\ref{th1.1}.

 \begin{proof}[Proof of part (1) of Theorem~\ref{th1.1}]  Letting $f_k(x)=u(x, 1/k)$,
 it follows by Lemma~\ref{le3.2} that $u(x, t+{1/k} )=    e^{-t\sqrt{\L}}(f_k)(x)$ and so
 \begin{align}\label{e3.10}
    \L u(x, t+{1/k})=  \L e^{-t\sqrt{\L}}(f_k)(x).
\end{align}
Then we have the following facts:

 \begin{itemize}

\item[(i)]    $H^1_\L(\Real^{n})$ is a Banach space;

\item[(ii)]
%{\bf ${\large \bullet}$}
 For each $t>0$ and $x\in\RR$, $\partial^2_t {\mathcal P}_t(x, \cdot)\in H^1_{\L}(\RR)$ with $\|\partial^2_t {\mathcal P}_t(x, \cdot)\|_{H^1_{\L}(\RR)}
 \leq C/t^2$ (see \eqref{e2.8});

  \smallskip

\item[(iii)]
%{\bf ${\large \bullet}$}
 The duality theorem that
 $ (H^1_\L(\Real^{n}) )^{\ast}= {\rm BMO_\L}(\Real^{n})$(see Lemma~\ref{le2.5}).

 \end{itemize}

 \noindent
  From (i), we use Lemma~\ref{le3.5} and pass  to a subsequence, we have that $f_k\rightarrow f$
  (in weak-${\ast}$ convergence) for some $f\in {\rm BMO}_\L(\Real^{n})$
 such that $\|f\|_{{\rm BMO}_\L(\Real^{n})}\leq C\|u\|_{{\rm HMO_\L}(\Real_+^{n+1})}$. Then by (ii) and (iii),
 we conclude that, for each $(x,t)\in {\mathbb R}_+^{n+1}$, the right-hand side
 of \eqref{e3.10} converges to $\L e^{-t\sqrt{\L}}(f)(x)$ when $k\to \infty$. On the other hand,
  as $k\rightarrow \infty$, the left-hand side of \eqref{e3.10} converges pointwisely
 to $ \L u(x, t)$.   Hence, for a fixed $t>0,$
 $$\L w(x, t) =0\ \ {\rm in}\ \
 {\mathbb R}^{n},
 $$
 where $w(x,t)=u(x, t)- e^{-t\sqrt{\L}}(f)(x)$.
Next, let us verify \eqref{e2.9}. Letting $k$ small enough such that $t>2/k$,   one writes
\begin{eqnarray*}
\int_{\RR} {|w(x,t)|\over 1+ |x|^{ 2n}}dx
& \leq&
   \int_{\RR} {|u(x, 1/k)|\over 1+ |x|^{ 2n}}dx \\
&& +\int_{\RR} {|u(x, t)-u(x, 1/k)|\over 1+ |x|^{ 2n}}dx\\
&&+ \int_{\RR} {|e^{-t\sqrt{\L}}(f)(x)|\over 1+ |x|^{ 2n}}dx
 \leq   I+II+III.
\end{eqnarray*}
By Lemma~\ref{le3.1}, we have that $I\leq C_k$.   For term $II$, we use  \eqref{e3.2} to obtain that $  | \partial_t u(x, t) |
  \leq   C /t,
$ and then
 \begin{eqnarray*}
 | u(x, t)-u(x, 1/k) |  =  \Big| \int_{1/k}^{t}   \partial_s u(x, s) ds \ \Big| \leq C\log(kt),
 \end{eqnarray*}
which gives
 \begin{eqnarray*}
II
& \leq& C \int_{\RR} {\log(kt)\over 1+ |x|^{2n} }dx
\leq C\log(kt).
\end{eqnarray*}

Consider the term $III$. For a fixed $t>0$ and $x\in R^n$, we set $f_B=t^{-n}\int_{B(x,t)} f(y)dy.$
It can be verified  by  a standard argument (see \cite[Theorem 2]{DGMTZ}) that
$|e^{-t\sqrt{\L}}f(x)|
 \leq C\|f\|_{{\rm BMO}_\L(\Real^{n})} +|f_B|.$    By Lemma 2 of  \cite{DGMTZ}, we have that
 $|f_B|\leq C(1+{\rm log}[{\rho(x)/t}])\|f\|_{{\rm BMO}_\L(\Real^{n})}$ if $t<\rho(x)$;
  $|f_B|\leq C \|f\|_{{\rm BMO}_\L(\Real^{n})}$ if $t\geq \rho(x)$. It then follows by Lemma~\ref{le2.1}  that
  there is a constant $k_0\geq 1$ such that
 \begin{eqnarray*}
|e^{-t\sqrt{\L}}f(x)| \leq  C\|f\|_{{\rm BMO}_\L(\Real^{n})}
\left(1+  {\rm log}\left[1 + {C\rho(0)\over t}\left(1+{|x|\over \rho(0)}\right)^{k_0\over k_0+1}\right] \right)
\end{eqnarray*}
 for every $t>0$ and   $x\in\RR,$
which yields
 $$
 III\leq  C\|f\|_{{\rm BMO}_\L(\Real^{n})} \int_{\RR} {1\over 1+ |x|^{2 n}}
 \left(1+  {\rm log}\left[ 1 + {C\rho(0)\over t}\left(1+{|x|\over \rho(0)}\right)^{k_0\over k_0+1}\right] \right)dx  \leq C_{t}<\infty.
 $$
  Estimate \eqref{e2.9} then follows readily.

By Lemma~\ref{le2.7}, we have that
 $u(x,t)=e^{-t\sqrt{\L}}(f)(x)$ with   $f\in {\rm BMO}_\L(\Real^{n}).$  The proof of part (1) of Theorem~\ref{th1.1}
 is complete.
\end{proof}

 \bigskip

%%%%%%%%%%%%%%%%%%%%%%%%%%%%%%%%%%%%%%%%%%%%%%%%%%%%%%
\subsection{The characterization of ${\rm BMO}_{\L}(\RR)$ in terms of Carleson measure}
%A characterization of the Poisson semigroups of Schr\"odinger operators with BMO traces}
%%%%%%%%%%%%%%%%%%%%%%%%%%%%%%%%%%%%%%%%%%%%%%%%%%%%%%

To prove part (2) of Theorem~\ref{th1.1}, we need  the following Lemmas~\ref{le3.8} and \ref{le3.9}.

\begin{lem}\label{le3.8}
 Suppose $V\in B_q$ for some $q> n.$  Let $  \beta=1-{n\over q}$.
  For every $N>0$, there exist    constants $C=C_{N}>0$ and $c>0$ such that
  for all $x,y\in\RR$ and $t>0,$ the semigroup kernels ${\mathcal K}_t(x,y)$,   associated to $e^{-t{\L}}$,
   satisfy the following estimates:
\begin{eqnarray}\label{e3.11}
 | \nabla_x {\mathcal K}_t(x,y)| + | \sqrt{t} \nabla_x \partial_t{\mathcal K}_t(x,y)|
 \leq C t^{-(n+1)/2}e^{-\frac{\abs{x-y}^2}{ct}}\left(1+\frac{\sqrt{t}}{\rho(x)}
+\frac{\sqrt{t}}{\rho(y)}\right)^{-N},
\end{eqnarray}
and for $|h|<|x-y|/4,$
\begin{eqnarray}\label{e3.12}
 | \nabla_x{\mathcal K}_t(x+h,y)- \nabla_x{\mathcal K}_t(x,y)|
 \leq C\Big({|h|\over \sqrt{t}}\Big)^{\beta}
t^{-(n+1)/2}e^{-\frac{\abs{x-y}^2}{ct}}.
\end{eqnarray}
\end{lem}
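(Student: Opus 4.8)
The plan is to freeze the time variable and use elliptic regularity in $x$. For fixed $t>0$ and $y\in\RR$, the function $w:={\mathcal K}_t(\cdot,y)$ is a weak solution in $\RR$ of $-\Delta_x w+Vw=-\partial_t{\mathcal K}_t(\cdot,y)$, hence of $-\Delta_x w=-Vw-\partial_t{\mathcal K}_t(\cdot,y)=:g$. Since $w$ and $\partial_t{\mathcal K}_t(\cdot,y)$ are locally bounded (Lemma~\ref{le2.2}) and $V\in L^q_{\rm loc}$, we get $g\in L^q_{\rm loc}$, so a Calder\'on--Zygmund bootstrap gives $w\in W^{2,q}_{\rm loc}(\RR)$; because $q>n$ the Morrey embedding $W^{2,q}\hookrightarrow C^{1,1-n/q}$ applies, and $\beta=1-n/q$ is precisely the Morrey exponent in \eqref{e3.12}. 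Besides the Gaussian bounds of Lemma~\ref{le2.2}, the only further ingredient is the estimate $\|V\|_{L^q(B(x_0,r))}\le C\,r^{\,n/q-2}$, which follows directly from \eqref{e1.3} and \eqref{e1.7} for $r\le c\rho(x_0)$ and extends to all $r>0$ by covering $B(x_0,r)$ with $O\big((r/\rho(x_0))^{n}\big)$ balls of radius $\sim\rho(x_0)$ and using Lemma~\ref{le2.1}.

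First I would prove \eqref{e3.11} without the critical-radius factor, and prove \eqref{e3.12}. Fix $x_0\in\RR$, put $r=c\sqrt t$, and note $e^{-c|z-y|^2/t}\le Ce^{-c'|x_0-y|^2/t}$ for $z\in B(x_0,2r)$. Using Lemma~\ref{le2.2}(i)--(ii) and the $V$-bound,
$$\|g\|_{L^q(B(x_0,2r))}\le C\Big(\|V\|_{L^q(B(x_0,2r))}+t^{-1}|B(x_0,2r)|^{1/q}\Big)t^{-n/2}e^{-c'|x_0-y|^2/t}\le C(\sqrt t)^{\,n/q-2}\,t^{-n/2}e^{-c'|x_0-y|^2/t},$$
and $\|w\|_{L^\infty(B(x_0,2r))}\le Ct^{-n/2}e^{-c'|x_0-y|^2/t}$. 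Rescaling $B(x_0,2r)$ to the unit ball, applying the interior $W^{2,q}$ estimate and $W^{2,q}\hookrightarrow C^{1,\beta}$, and rescaling back, one obtains
$$|\nabla_x{\mathcal K}_t(x_0,y)|\le Ct^{-(n+1)/2}e^{-c'|x_0-y|^2/t},\qquad [\,\nabla_x{\mathcal K}_t(\cdot,y)\,]_{C^{\beta}(B(x_0,r/2))}\le Ct^{-(n+1)/2-\beta/2}e^{-c'|x_0-y|^2/t}.$$
For $x,h$ with $|h|<|x-y|/4$: if $|h|\le c\sqrt t/4$, the second estimate with $x_0=x+\tfrac12h$ gives \eqref{e3.12} at once; if $|h|>c\sqrt t/4$, then $|x+h-y|\ge\tfrac34|x-y|$ and the first estimate at $x$ and at $x+h$ gives $|\nabla_x{\mathcal K}_t(x+h,y)-\nabla_x{\mathcal K}_t(x,y)|\le Ct^{-(n+1)/2}e^{-c|x-y|^2/t}\le C(|h|/\sqrt t)^{\beta}t^{-(n+1)/2}e^{-c|x-y|^2/t}$.

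Next I would restore the factor $\big(1+\sqrt t/\rho(x)+\sqrt t/\rho(y)\big)^{-N}$ via the reproducing formula ${\mathcal K}_t(x,y)=\int_{\RR}{\mathcal K}_{t/2}(x,z){\mathcal K}_{t/2}(z,y)\,dz$: differentiate in $x$, bound $\nabla_x{\mathcal K}_{t/2}(x,z)$ by the Gaussian estimate just proved and ${\mathcal K}_{t/2}(z,y)$ by the full bound of Lemma~\ref{le2.2}(i) with a large exponent. The $\rho(y)$-part passes straight through; for the $\rho(z)$-part, Lemma~\ref{le2.1} lets one replace it by $C(1+\sqrt t/\rho(x))^{-N}(1+|x-z|/\sqrt t)^{M}$ for a suitable $M$, and the polynomial factor is absorbed by $e^{-c|x-z|^2/t}$; integrating in $z$ (Gaussian convolution) then gives \eqref{e3.11} in full. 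For the term involving $\nabla_x\partial_t{\mathcal K}_t$ I would run the same frozen-time scheme on $w=\partial_t{\mathcal K}_t(\cdot,y)$, a weak solution of $-\Delta_x w+Vw=-\partial_t^2{\mathcal K}_t(\cdot,y)$; the Gaussian bound for $\partial_t^2{\mathcal K}_t$ needed on the right is obtained from $\partial_t^2{\mathcal K}_t(x,y)=\int_{\RR}\big(\partial_s{\mathcal K}_s(x,z)\big)\big|_{s=t/2}\,\big(\partial_s{\mathcal K}_s(z,y)\big)\big|_{s=t/2}\,dz$ and Lemma~\ref{le2.2}(ii), and the critical-radius factor is reinstated exactly as before.

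The hard part will be the simultaneous book-keeping of the Gaussian decay and the two critical-radius factors through Lemma~\ref{le2.1}, and in particular the estimate on $\|V\|_{L^q(B(x_0,r))}$ in the regime $r\gg\rho(x_0)$: there one covers $B(x_0,r)$ by balls of radius $\sim\rho(x_0)$, bounds their number and the $L^q$ mass of $V$ on each by means of Lemma~\ref{le2.1}, and offsets the resulting polynomial growth in $r/\rho(x_0)$ against the strong decay of ${\mathcal K}_t$ given by Lemma~\ref{le2.2}(i), taking $N$ large. A preliminary point to secure is that ${\mathcal K}_t(\cdot,y)\in W^{2,q}_{\rm loc}(\RR)$ for each fixed $t$, which is the Calder\'on--Zygmund bootstrap mentioned above; alternatively one may first work with the bounded truncations $\min\{V,m\}$ of $V$ and pass to the limit, or invoke Shen's interior estimates in \cite{Shen}.
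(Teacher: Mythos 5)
Your proposal is correct, and while it is rooted in the same circle of ideas as the paper's proof (Shen's elliptic estimates for $-\Delta+V$ under the $B_q$ condition, $W^{2,q}$ regularity plus Morrey embedding $W^{2,q}\hookrightarrow C^{1,1-n/q}$), it organizes the argument differently. The paper works directly with the Newtonian potential representation: fixing $R=|x_0-y_0|/8$, it writes $u\eta=\int\Gamma_0(x,y)(-\Delta)(u\eta)\,dy$ with $-\Delta u=-Vu-\partial_t u$, differentiates under the integral, and reads off the gradient bound \emph{together with} the critical-radius factor in a single pass, absorbing the polynomial error terms $(R/\rho(x_0))^{m_0}$ and $R^2/t$ into the Gaussian and the large power of $(1+\sqrt t/\rho)$; the H\"older estimate is obtained from the same cut-off identity via the interior $W^{2,q}$ estimate and Morrey. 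You instead freeze the scale at $r\sim\sqrt t$, invoke Calder\'on--Zygmund $W^{2,q}$ regularity plus Morrey on $B(x_0,c\sqrt t)$ to get a pure Gaussian gradient/H\"older bound first, and then restore the decay factor $(1+\sqrt t/\rho(x)+\sqrt t/\rho(y))^{-N}$ by the Chapman--Kolmogorov identity $\mathcal K_t=\mathcal K_{t/2}\circ\mathcal K_{t/2}$ combined with Lemma~\ref{le2.1}. Each route has a small advantage: your choice $r\sim\sqrt t$ avoids the degenerate regime $|x_0-y_0|\ll\sqrt t$ that the paper's choice $R=|x_0-y_0|/8$ handles only implicitly, whereas the paper's choice delivers both the Gaussian and the critical-radius decay in one step and avoids the extra semigroup bookkeeping. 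One small imprecision you should correct: the estimate $\|V\|_{L^q(B(x_0,r))}\le Cr^{n/q-2}$ is only valid for $r\lesssim\rho(x_0)$; for $r\gg\rho(x_0)$ there is an unavoidable extra factor of order $(r/\rho(x_0))^{m_0}$, so the claimed extension ``to all $r>0$'' cannot hold as stated. You do acknowledge this in your final paragraph, and indeed in the regime $\sqrt t\gg\rho(x_0)$ the critical-radius factor from Lemma~\ref{le2.2}(i) in $\|\mathcal K_t(\cdot,y)\|_{L^\infty(B(x_0,2r))}$ must be used already in the \emph{first} step to offset this growth before the Gaussian bound on $\nabla_x\mathcal K_t$ is secured; it is worth stating this explicitly so the two ``steps'' of the plan are not mistakenly read as independent of the critical-radius input.
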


\begin{proof} The proof of Lemma~\ref{le3.8} is inspired by ideas developed in \cite{Shen}.
Let $\Gamma_0(x,y)$ denote the fundamental
solution for the operator $-\Delta$ in $\Real^n.$ It is well-known that
$$
\Gamma_0(x,y)=  -{1\over n(n-2)\omega(n)} {1\over |x-y|^{n-2}}, \ \ \ \ n\geq 3,
$$
where $\omega(n)$ is the area of the unit sphere in $\Real^n$.
Fix $t>0$ and  $x_0, y_0\in \Real^{n}$. Assume that  $\partial_tu+\L u=0$.
Let $\eta\in C_0^{\infty}(B(x_0, 2R))$ such that
$\eta=1$ on $B(x_0, 3R/2)$,  $|\nabla\eta|\leq C/R$ and $|\nabla^2 \eta|\leq C/R^2.$
Following an argument as in Lemma 4.6 of \cite{Shen}, we have
\begin{align}\label{e3.13}
u(x)\eta(x)&=\int_{\Real^n}  \Gamma_0(x,y)\{ -\Delta \} (u\eta)(y) dy\nonumber\\
&=\int_{\Real^n}  \Gamma_0(x,y)\{ -V u \eta - \eta\partial_t u -2\nabla u \cdot \nabla \eta -u\Delta \eta\}   dy\nonumber\\
&=\int_{\Real^n}  \Gamma_0(x,y)\{ -V u \eta -\eta\partial_t u -u\Delta \eta  \}   dy\nonumber\\
&\quad + 2\int_{\Real^n}  \nabla\Gamma_0(x,y) \cdot (\nabla \eta) u    dy.
\end{align}
Thus, for $x\in B(x_0, R),$
\begin{align*}
|\nabla_x u(x)| &\leq C\int_{B(x_0,2R)}  {V(y)|u(y)| |\eta(y)| \over |x-y|^{n-1}}  dy \\
&\hspace{ 2cm} +
C\int_{B(x_0,2R)}  { |\partial_t u(y) | |\eta(y)| \over |x-y|^{n-1}}  dy+{C\over R^{n+1}} \int_{B(x_0,2R)}   |u(y)|  dy.
\end{align*}
%If  $V\in B_n$, hence $V\in B_{q_0}$ for some $q_0>n$, i
It follows from Lemmas 1.2 and 1.8 in \cite{Shen} that there exist $C$ and $m_0>0$ such that
for all $R>0,$
$$
\int_{B(x_0,2R)}  {V(y)  \over |x-y|^{n-1}}  dy  \leq {C\over R^{n-1}}\int_{B(x_0,2R)}   V(y)   dy
\leq {C\over R }   \Big({R\over \rho(x_0)}\Big)^{m_0}.
 $$
Therefore,
 \begin{align*}
|\nabla_x u(x)| &\leq {C\over R}
\sup_{B(x_0,2R)}|u(y)| \Big\{ \Big({R\over \rho(x_0)}\Big)^{m_0}  + 1\Big\}
 +  CR\sup_{B(x_0,2R)}|\partial_t u(y) |,\ \ \ \  \forall x\in B(x_0, R).
\end{align*}

Let us prove \eqref{e3.11}.
We take  $u={\mathcal K}_t(x, y_0)$ and  $R=|x_0-y_0|/8$. If $x\in B(x_0, 2R)$, then $\abs{x-y_0}\sim \abs{x_0-y_0}$.
Using Lemma \ref{le2.2}, we have that for any $N>m_0+1,$
\begin{align}\label{e3.14}
|\nabla_x {\mathcal K}_t(x_0, y_0) | &\leq {C\over R}
\sup_{B(x_0,2R)}|{\mathcal K}_t(x, y_0)| \Big\{ \Big({R\over \rho(x_0)}\Big)^{m_0}  + 1\Big\}
 +  R\sup_{B(x_0,2R)}|\partial_t {\mathcal K}_t(x, y_0) |\nonumber\\
&\leq  {C_N\over R} t^{-n/2}e^{-\frac{\abs{x_0-y_0}^2}{ct}}\Big( 1
+\frac{\sqrt{t}}{\rho(x_0)}+\frac{\sqrt{t}}{\rho(y_0)}\Big)^{-N}
\Big\{ \Big({R\over \rho(x_0)}\Big)^{m_0}  + 1 +{R^2\over t}\Big\}\nonumber\\
&\leq  C'_N  t^{-(n+1)/2}e^{-\frac{\abs{x_0-y_0}^2}{c_1t}}\left( 1
+\frac{\sqrt{t}}{\rho(x_0)}+\frac{\sqrt{t}}{\rho(y_0)}\right)^{-N+m_0},
\end{align}
which implies the desired result.
Now, letting $u=\sqrt{t}\partial_t{\mathcal K}_t(x, y_0)$,
we use a similar argument as in \eqref{e3.14} to obtain   estimate  for
the term $|\sqrt{t}\nabla_x\partial_t{\mathcal K}_t(x_0, y_0)|$ in
 \eqref{e3.11}.

To prove \eqref{e3.12}, we follow  an argument as in Remark 4.10 and (6.6) of \cite{Shen}.
It follows from   \eqref{e3.13}  that
\begin{align*}
\|\nabla^2_x(u\eta)\|_{q} \leq C R^{(n/q)-2}
\Big\{ \Big({R\over \rho(x_0)}\Big)^{m_0}  + 1\Big\} \sup_{B(x_0,2R)}|u|  +  CR^{(n/q)}\sup_{B(x_0,2R)}|\partial_t u |.
\end{align*}
To see \eqref{e3.12}, we fix $x_0, y_0\in \RR$ and $h\in\RR, |h|<|x_0-y_0|/4$.
Let  $u= {\mathcal K}_t(x, y_0)$ and  $R=|x_0-y_0|/8$.
It then follows from the imbedding theorem of  Morrey that
\begin{eqnarray*}
&&\hspace{-1.5cm}|\nabla_x{\mathcal K}_t(x_0+h, y_0)-\nabla_x{\mathcal K}_t(x_0, y_0)|
\leq  C|h|^{1-(n/q)} \Big( \int_{B(x_0, R)} |\nabla^2_x {\mathcal K}_t(x, y_0)|^{q}dx\Big)^{1/q}\\
&\leq &C \Big({|h|\over R}\Big)^{1-(n/q)} {1\over R}
\Big\{ \Big({R\over \rho(x_0)}\Big)^m  + 1\Big\} \left( \sup_{B(x_0,2R)}| {\mathcal K}_t(x, y_0)|
 +  {R^2\over t }\sup_{B(x_0,2R)}|t\partial_t  {\mathcal K}_t (x, y_0) |\right)\\
&\leq &C \Big({|h|\over R}\Big)^{1-(n/q)} t^{-(n+1)/2}e^{-\frac{\abs{x_0-y_0}^2}{ct}}\\
&\leq &C \Big({|h|\over \sqrt{t}}\Big)^{1-(n/q)} t^{-(n+1)/2}e^{-\frac{\abs{x_0-y_0}^2}{c't}}.
\end{eqnarray*}
Estimate  \eqref{e3.12} follows readily.
\end{proof}

 Turning to the Poisson semigroup $\{e^{-t\sqrt{\L}}\}_{t>0}$, we have

\begin{lem}\label{le3.9}
 Suppose $V\in B_q$ for some $q> n.$ Let $\beta=1-{n\over q}$.  For every $N>0$, there exist    constants $C=C_{N}>0$
 such that for all $x,y\in\RR$ and $t>0$,

 \begin{itemize}

\item[(i)]
 ${\displaystyle
|t\nabla_x{\mathcal P}_t(x,y)|
            \leq C \frac{t}{(t^2+ \abs{x-y}^2)^{\frac{n+1}{2}}}
              \left(1+\frac{(t^2+\abs{x-y}^2)^{1/2}}{\rho(x)}
             + \frac{(  t^2 +\abs{x-y}^2)^{1/2}}{\rho(y)}\right)^{-N}; }
 $

\item[(ii)] For all $|h|\leq |x-y|/4$,
\begin{eqnarray*}
 |t\nabla_x{\mathcal P}_t(x+h,y)-t\nabla_x{\mathcal P}_t(x,y)|
 \leq C\left({|h|\over t}\right)^{\beta}
{t\over (t^2+|x-y|^2)^{{n+1\over 2}}};
\end{eqnarray*}

\item[(iii)]  There is some $\delta>1$ such that
${\displaystyle  \big| t\nabla_x e^{-t\sqrt{\L}}(1)(x) \big|\le C \min \Big\{
  \Big(\frac{ {t}}{\rho(x)}\Big)^{\delta}, \Big( {t\over \rho(x)}\Big)^{-N}\Big\}.}$
 \end{itemize}
\end{lem}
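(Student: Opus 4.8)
The plan is to obtain all three estimates by subordinating the heat–kernel bounds of Lemma~\ref{le3.8}: by Bochner's formula \eqref{e2.6},
\[
t\,\nabla_x{\mathcal P}_t(x,y)=\frac{t^2}{2\sqrt\pi}\int_0^\infty\frac{e^{-t^2/4s}}{s^{3/2}}\,\nabla_x{\mathcal K}_s(x,y)\,ds,
\]
so that (i) and (ii) reduce to estimating weighted $s$–integrals of $\nabla_x{\mathcal K}_s$ and of its $x$–increment. For (i) I would insert \eqref{e3.11}; the two Gaussian factors combine into $e^{-(t^2/4+|x-y|^2/c)/s}$, which concentrates the integral at $s\sim t^2+|x-y|^2$, and the elementary identity $\int_0^\infty s^{-\gamma}e^{-A/s}\,ds=\Gamma(\gamma-1)A^{1-\gamma}$ ($\gamma>1$) produces the factor $t\,(t^2+|x-y|^2)^{-(n+1)/2}$. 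The weight $(1+\sqrt s/\rho(x)+\sqrt s/\rho(y))^{-N}$ is transferred to $\bigl(1+(t^2+|x-y|^2)^{1/2}/\rho(x)+(t^2+|x-y|^2)^{1/2}/\rho(y)\bigr)^{-N}$ by splitting the integral at $s=t^2+|x-y|^2$ and using monotonicity of this weight in $s$; this is precisely the argument passing from Lemma~\ref{le2.2} to Lemma~\ref{le2.4}(i), cf.\ \cite[Proposition~3.6]{MSTZ}.

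For (ii) I would write the $x$–increment of $t\nabla_x{\mathcal P}_t$ as the subordination integral of $\nabla_x{\mathcal K}_s(x+h,y)-\nabla_x{\mathcal K}_s(x,y)$ and split at $s=|h|^2$. For $s\ge|h|^2$ the Hölder bound \eqref{e3.12} applies and contributes a factor $(|h|/\sqrt s)^\beta=|h|^\beta s^{-\beta/2}$; for $s<|h|^2$ I would bound the increment by the sum of the two gradients via \eqref{e3.11} (here $|h|\le|x-y|/4$ forces $|x+h-y|\sim|x-y|$, so a single Gaussian $e^{-|x-y|^2/c's}$ survives) and then insert the harmless factor $(|h|/\sqrt s)^\beta\ge1$. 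In both regimes one is reduced to $|h|^\beta t^2\int_0^\infty s^{-(n+4+\beta)/2}e^{-A/s}\,ds$ with $A\sim t^2+|x-y|^2$, which by the same identity equals $C|h|^\beta t^2(t^2+|x-y|^2)^{-(n+2+\beta)/2}$; since $t^{1+\beta}=(t^2)^{(1+\beta)/2}\le(t^2+|x-y|^2)^{(1+\beta)/2}$, this is $\le C(|h|/t)^\beta\,t\,(t^2+|x-y|^2)^{-(n+1)/2}$, as claimed.

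Part (iii) is the real difficulty, because merely integrating \eqref{e3.11} in $y$ only gives $|\nabla_x e^{-s\L}(1)(x)|\lesssim s^{-1/2}(1+\sqrt s/\rho(x))^{-N}$, which does not vanish as $s\to0^+$ and hence cannot by itself yield an exponent $\delta>1$. To capture the extra cancellation I would integrate the Kato--Trotter formula recalled before Lemma~\ref{le2.3} in the $y$ variable, obtaining
\[
1-e^{-s\L}(1)(x)=\int_0^s\!\!\int_{\RR}h_r(x-z)\,V(z)\,e^{-(s-r)\L}(1)(z)\,dz\,dr,
\]
differentiate in $x$, use $0\le e^{-(s-r)\L}(1)\le1$ together with $|\nabla_x h_r(x-z)|\le C r^{-1/2}\varphi_r(x-z)$ for a fixed nonnegative Schwartz function $\varphi$, and then apply \eqref{e2.2}. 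Since $\delta=2-\tfrac nq>1$, the resulting integral $\int_0^s(s-r)^{\delta/2-3/2}\,dr$ converges and gives $|\nabla_x e^{-s\L}(1)(x)|\lesssim s^{-1/2}(\sqrt s/\rho(x))^{\delta}$ for $s\le\rho(x)^2$; combined with the previous bound this yields $|\nabla_x e^{-s\L}(1)(x)|\lesssim s^{-1/2}\min\{(\sqrt s/\rho(x))^{\delta},(1+\sqrt s/\rho(x))^{-N}\}$ for all $s>0$.

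Finally, to deduce (iii) I would subordinate this last estimate. Bounding $\min\{\cdot,\cdot\}$ by its first argument and using $\int_0^\infty s^{\delta/2-2}e^{-t^2/4s}\,ds=C\,t^{\delta-2}$ (the Gaussian makes the integral convergent at $s=0$, since $\delta<2$) gives $|t\nabla_x e^{-t\sqrt\L}(1)(x)|\lesssim(t/\rho(x))^{\delta}$ for every $t>0$. For the complementary bound I would split the $s$–integral at $s=\rho(x)^2$: on $(0,\rho(x)^2]$ the factor $(\sqrt s/\rho(x))^{\delta}$ together with $e^{-t^2/4s}\le e^{-(t/\rho(x))^2/8}$ gives super-polynomial decay in $t/\rho(x)$, while on $(\rho(x)^2,\infty)$ the factor $(1+\sqrt s/\rho(x))^{-N}$ together with $e^{-t^2/4s}\le C_m(s/t^2)^m$ gives, for suitable $m$, a bound $C(t/\rho(x))^{2-2m}$; as $N$ (hence $m$) is arbitrary this yields $|t\nabla_x e^{-t\sqrt\L}(1)(x)|\lesssim(t/\rho(x))^{-N'}$ for every $N'$, and taking the minimum of the two bounds finishes the proof. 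I expect the bookkeeping in this last step — correctly propagating the $\rho(x)$–decay through the subordination integral — to be the main technical point.
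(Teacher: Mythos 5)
Your proof is correct and follows essentially the same route as the paper: subordination of the heat-kernel bounds of Lemma~\ref{le3.8} gives (i) and (ii), and for (iii) the Kato--Trotter formula, integrated in $y$, supplies the extra $(\sqrt{s}/\rho(x))^{\delta}$ decay of $\nabla_x e^{-s\mathcal{L}}(1)(x)$ near $s=0$ (exactly as in the paper's treatment of its term $I_1$), combined with a split of the subordination integral at $s\sim\rho(x)^2$. The only organizational difference is in (iii): the paper treats $t>\rho(x)$ directly by integrating (i) over $y$ and invokes Kato--Trotter only in the small-$s$ part of the subordination integral when $t\le\rho(x)$, whereas you first assemble a uniform heat-gradient estimate $|\nabla_x e^{-s\mathcal{L}}(1)(x)|\lesssim s^{-1/2}\min\{(\sqrt{s}/\rho(x))^{\delta},(1+\sqrt{s}/\rho(x))^{-N}\}$ and then subordinate it in one sweep; these are equivalent rearrangements of the same estimates.
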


\begin{proof}
The proofs of (i) and (ii) follow  from
the subordination formula \eqref{e2.6} and Lemma~\ref{le3.8}.
 To  prove (iii), we consider two cases.

\smallskip

\noindent
{\it Case 1: }  $t> \rho(x).$ We use  (i) to obtain
\begin{align*}
 | t\nabla_x e^{-t\sqrt{\L}}(1)(x)  |
%&\le  C\int_0^ {\infty} \Big({t\over u}\Big)^2 \exp \big(-{t^2\over 4u}\big)    \left|\sqrt{u} \nabla_x e^{-u {\L}}(1)(x)\right|    ~du \\
 \leq C  \left( {t\over \rho(x) }\right)^{-N}.
\end{align*}

\smallskip

\noindent
{\it Case 2: }  $t\leq \rho(x).$ In this case, it follows from  the subordination formula \eqref{e2.6} that
\begin{align*}
\big| t\nabla_x e^{-t\sqrt{\L}}(1)(x) \big|
&\le  C\int_0^ {\infty} \Big({t\over u}\Big)^2 \exp \big(-{t^2\over 4u}\big)    \left|\sqrt{u} \nabla_x e^{-u {\L}}(1)(x)\right|    ~du\\
 &\leq   C \int_0^{\rho(x)^2} \Big({t\over u}\Big)^2 \exp \big(-{t^2\over 4u}\big)    \left|\sqrt{u} \nabla_x e^{-u {\L}}(1)(x)\right|    ~du \\
 &\ + \ C\int_{\rho(x)^2}^{\infty} \Big({t\over u}\Big)^2 \exp \big(-{t^2\over 4u}\big)    \left|\sqrt{u} \nabla_x e^{-u {\L}}(1)(x)\right|    ~du \\
 &=I_1(x) +I_2(x).
\end{align*}
By Lemma~\ref{le3.8}, we have
\begin{align*}
I_2(x)
 \le C\int_{\rho(x)^2}^{\infty} \Big({t\over u}\Big)^2
  ~du \le C\left(\frac{t}{\rho(x)}\right)^{2}.
\end{align*}

Consider  the term $I_1(x).$ We apply Kato-Trotter formula  to obtain
\begin{align*}\label{eeeee}
h_u(x-y)- {\mathcal K}_u(x,y)
=  \int_0^u\int_{\Real^n} h_s(x-z)V(z){\mathcal K}_{u-s}(z, y)dzds,
\end{align*}
which gives
\begin{align*}
 |{\sqrt{u}}\nabla_x e^{-u\L}(1)(x)  \big|
 &= \Big|\int_{\Real^n}\int_0^u\int_{\Real^n}  \sqrt{u}\nabla_xh_s(x-z)V(z){\mathcal K}_{u-s}(z, y)dzdsdy\Big| \\
&\le C  \int_0^u  \Big({u\over s}\Big)^{1/2}\int_{\Real^n} s^{-n/2} \exp\Big(-{|x-z|^2\over cs}\Big)V(z) dzds.
\end{align*}
It follows from  \eqref{e2.2}
that  for $s\le \rho(x)^2,$
\begin{eqnarray*}
  \int_{\Real^n}s^{-n/2} \exp\Big(-{|x-y|^2\over cs}\Big) V(y)  dy\le
\frac C{s}\left(\frac{\sqrt s}{\rho(x)}\right)^{\delta},
\end{eqnarray*}
where $\delta=2-\frac{n}{q}>1$. This implies that
\begin{align*}
 |{\sqrt{u}}\nabla_x e^{-u\L}(1)(x)  |
&\le C  \int_0^u  \Big({u\over s}\Big)^{1/2}  s^{-1} \Big( {\sqrt{s} \over \rho(x)}\Big)^{\delta}  ds\le C\Big( {\sqrt{u} \over \rho(x)}\Big)^{\delta}.
\end{align*}
Therefore,
\begin{align*}
 I_1(x)
&\le  C\int_0^ {\rho(x)^2} \Big({t\over u}\Big)^2 \exp \big(-{t^2\over 4u}\big)   \left(\frac{\sqrt{u}}{\rho(x)}\right)^{\delta}
 ~du\le C\left(\frac{t}{\rho(x)}\right)^{\delta}.
\end{align*}
Combining the estimates of {\it Case 1} and {\it Case 2}, we obtain (iii).
\end{proof}

 \medskip

To prove part (2) of Theorem~\ref{th1.1}, we  recall that the  Carleson
measure is closely related to  the   space ${\rm BMO}_\L(\RR)$.  We  note that
 for every $f\in {\rm BMO}_\L(\RR)$,
\begin{equation}\label{e3.18}
 \mu_{\nabla_t, f}(x,t)=t| \partial_te^{-t\sqrt{\L}}(f)(x)|^2 {dx dt}
\end{equation}
is a Carleson measure with $||| \mu_{\nabla_t, f_k}|||_{car} \leq C\|f\|^2_{{\rm BMO_\L}(\RR)}$(see \cite{DY2, MSTZ}).

\begin{proof}[Proof of part (2) of Theorem~\ref{th1.1}]
 Recall that the condition $V\in B_n$   implies $V\in B_{q_0}$ for some $q_0>n$.
From
Lemmas~\ref{le2.4} and  \ref{le3.9}, we see  that $u(x,t)=e^{-t\sqrt{\L}}f(x)\in C^1({\mathbb R}^{n+1}_+)$.
Let us fix a ball $B=B(x_B, r_B)$. From \eqref{e3.18},
it suffices to  show that there exists a constant $C>0$ such that
\begin{align*}
 \int_0^{r_B}\int_B |t\nabla_{x}e^{-t\sqrt { \L}}f(x)|^2\frac{dxdt}{t}
&\le C|B|\norm{f}_{{\rm BMO_\L}(\RR)}^2.
\end{align*}
To do this, we split the function $f$ into local, global, and constant parts as follows
$$f=(f-f_{2B})\chi_{2B}+(f-f_{2B})\chi_{(2B)^c}+f_{2B}=f_1+f_2+f_3,
$$
where $2B=B(x_B, 2r_B)$.

Since the Riesz transform $\nabla \L^{-1/2}$ is bounded on $L^2(\RR)$,  we have
\begin{align*}
 \int_0^{r_B}\int_B |t\nabla_{x}e^{-t\sqrt { \L}}f_1(x) |^2\frac{dxdt}{t}
 &= \int_0^{r_B}\int_{\Real^n} |\nabla_{x}\L^{-{1/2}}t\L^{{1/2}}e^{-t\sqrt {\L}}f_1(x) |^2\frac{dxdt}{t}\\
&\le C \int_0^{\infty}\int_{\RR} |t\L^{1/2}e^{-t\sqrt { \L}}f_1(x) |^2\frac{dxdt}{t}\\
&\le C\norm{f_1}_{L^2(\Real^n)}^2\\
&=C\int_{2B}\abs{f(x)-f_{2B}}^2dx\\
&\le C|B|\norm{f}_{{\rm BMO_\L}(\RR)}^2.
\end{align*}

To estimate the global term, we use (i) of Lemma~\ref{le3.9}  and then the standard argument as in Theorem 2 of \cite{DGMTZ}   shows that
for $x\in B$ and $t<r_B$,
\begin{align*}
%&\abs{t\nabla_x e^{-t\sqrt \L}f_2(x)}=\abs{t\nabla_x \int_{\Real^n}e^{-t\sqrt \L}(x,y)f_2(y)dy}\\
%&\le Ct^2\int_{\Real^n}\int_0^\infty \abs{f_2(y)}\frac{e^{-\frac{t^2}{4u}}}{u^{3/2}}\nabla_x k_u(x,y)dudy\\
%&\le C\int_{(B^*)^c}\abs{f(y)-f_{B^*}}\int_0^\infty \frac{t^2}{u^{3/2}}e^{-\frac{t^2}{4u}}u^{-\frac{n+1}{2}}e^{-\frac{\abs{x-y}^2}{5u}}dudy\\
&| t\nabla_x e^{-t\sqrt \L}f_2(x) |
 \le C\int_{(2B)^c} \abs{f(y)-f_{2B}}\frac{t}{\abs{x_B-y}^{n+1}}dy\\
&\le C\sum_{k=2}^\infty \frac{t}{(2^kr_B)^{n+1}}\left[\int_{2^kB\backslash 2^{k-1}B}
\abs{f(y)-f_{2^kB}}dy+(2^kr_B)^n\abs{f_{2^kB}-f_{2B}}\right]\\
&\le C\big({t\over r_B}\big)\sum_{k=2}^\infty 2^{-k}\left[\norm{f}_{{\rm BMO_\L}(\RR)}
+k\norm{f}_{{\rm BMO_\L}(\RR)}\right]\le C\big({t\over r_B}\big)\norm{f}_{{\rm BMO_\L}(\RR)},
\end{align*}
which yields
\begin{align*}
 \int_0^{r_B}\int_B |t\nabla_{x}e^{-t\sqrt { \L}}f_2(x) |^2\frac{dxdt}{t}\le C|B| r_B^{-2}\int_0^{r_B}
t {dt} \norm{f}^2_{{\rm BMO_\L}(\RR)}\leq  C|B|\norm{f}^2_{{\rm BMO_\L}(\RR)}.
\end{align*}
It remains to estimate the constant term $f_3=f_{2B}$, for which we make use of (i) and (iii) of Lemma~\ref{le3.9}.
  Assume first that $r_B\le \rho(x_B)$. By Lemma~\ref{le3.1}, $\rho(x)
\sim \rho(x_B)$ for $x\in B$, we have
 \begin{eqnarray}\label{e3.19}
 \int_0^{r_B}\int_B  |t\nabla_{x}e^{-t\sqrt{\L}}f_3(x)|^2\frac{dxdt}{t}
&\leq&
 {\abs{f_{2B}}^2}  \int_0^{r_B}\int_B  \big(t/\rho(x)\big)^{2\delta}  \frac{dxdt}{t}\nonumber\\
&\leq&
 C|B|\abs{f_{2B}}^2   \big(r_B/\rho(x_B)\big)^{2\delta}\nonumber\\
 &\leq&
 C|B|\|f\|_{{\rm BMO_\L}(\RR)}^2\Big(1+\log {\rho(x_B)\over r_B}\Big)^2   \big(r_B/\rho(x_B)\big)^{2\delta}\nonumber\\
  &\leq&
 C|B|\|f\|_{{\rm BMO_\L}(\RR)}^2.
\end{eqnarray}

Suppose finally that $r_B> \rho(x_B)$,  we use an argument as in Theorem 2 of \cite{DGMTZ} to select a finite family
of critical balls $\set{Q_k}$ such that $B\subset \cup Q_k$ and $\sum\abs{Q_k}\le \abs{B}$.
 Then, using the fact that  $\abs{f_{2B}}\le \norm{f}_{\rm BMO_\L(\RR)},$
 we can bound the left hand side of \eqref{e3.19} by
\begin{align*}
&C{\norm{f}_{\rm BMO_\L}^2 }\sum_k\left(\int_0^{\rho(x_k)}\int_{Q_k}
\left({t\over {\rho(x_k)}}\right)^{2\delta}{dxdt\over t}+\int_{\rho(x_k)}^\infty
\int_{Q_k}\left(\frac{t}{\rho(x_k)}\right)^{-2N}{dxdt\over t}\right)\\
&\le  C{\norm{f}_{{\rm BMO_\L}(\RR)}^2 }\sum_k \abs{Q_k}\le C|B|\norm{f}_{{\rm BMO_\L}(\RR)}^2,
\end{align*}
which establishes the proof of part (2) of Theorem 1.1.
 \end{proof}

\bigskip

\medskip

\section{The spaces ${\rm HMO}^\alpha_{\L}({\mathbb R}^{n+1}_+)$ and their characterizations}
\setcounter{equation}{0}

In this section we will extend the method   for the space ${\rm BMO}_{\L}(\RR)$ in Section 3
 to obtain some generalizations to  Lipschitz-type spaces $\Lambda^{\alpha}_{\L}(\RR)$ with
  $0<\alpha<1$ (see \cite{BHS2008}).

 Let us recall that a locally integrable function $f$ in
 $\Lambda^{\alpha}_{\L}(\RR), 0<\alpha<1,$ if  there exists
 a constant $C>0$ such that
 \begin{eqnarray}\label{e4.1}
 |f(x)-f(y)|\leq C |x-y|^{\alpha}
\end{eqnarray}
 and
 \begin{eqnarray}\label{e4.2}
 |f(x)|\leq C \rho(x)^{\alpha}
\end{eqnarray}
 for all $x,y\in\RR.$  The norm of $f$ in  $\Lambda^{\alpha}_{\L}(\RR)$ is given by
 $$
 \|f\|_{\Lambda^{\alpha}_{\L}(\RR)}=\sup_{\substack{ x,y\in\RR\\
 x\not=y}}{|f(x)-f(y)|\over |x-y|^{\alpha}} +\sup_{x\in\RR} |\rho(x)^{-\alpha}f(x)|.
 $$
 Because of \eqref{e4.2}, this space $\Lambda^{\alpha}_{\L}(\RR)$  is in fact a proper subspace of  the classical Lipschitz space
 $\Lambda^{\alpha}(\RR)$ (see \cite{BHS2008, FJN, MSTZ}).
   Following \cite{BHS2008},  a locally integrable function $f$ in $\Real^n$ is in ${\rm BMO}_{\L}^{\alpha}(\RR)$, $\alpha> 0,$
  if there is a constant $C>0$ such that
\begin{eqnarray}\label{e4.3}
\int_{B}|f(y)-f_B|dy\leq Cr^{n+\alpha}
\end{eqnarray}
for every ball $B=B(x, r)$, and
\begin{eqnarray}\label{e4.4}
\int_{B} |f(y)|~dy\leq Cr^{n+\alpha}
	\end{eqnarray}
 for every  ball  $B=B(x, r)$  with $r\geq \rho(x)$. Define
 \begin{eqnarray*}
 \|f\|_{{\rm BMO}_{\L}^{\alpha}(\RR)} =\inf\big\{ C: C \ {\rm satisfies }\  \eqref{e4.3} \ {\rm and}\  \eqref{e4.4}  \big\}.
\end{eqnarray*}
It is proved in \cite[Proposition 4]{BHS2008}  that for $0<\alpha<1$,
$ {\rm BMO}_{\L}^{\alpha}(\RR)=\Lambda^{\alpha}_{\L}(\RR).$

\begin{thm}\label{th4.1}
Suppose $V\in B_q$ for some $q\geq n,$ and $\alpha\in (0, 1).$ We denote by ${\rm HMO^{\alpha}_\L}(\Real_+^{n+1})$  the class of all $C^1$-functions $u(x,t)$
of the solution of ${\mathbb L}u=-u_{tt}+\L u=0$ such that
%the class of all ${\mathbb L}$-harmonic functions $u(x,t)\in C^1({\mathbb R}^{n+1}_+)$  such that
\begin{eqnarray}\label{e4.5}
\|u\|^2_{{\rm HMO^{\alpha}_\L}(\Real_+^{n+1})}&=& \sup_{x_B, r_B}     r_B^{-(n+2\alpha)} \int_0^{r_B}\int_{B(x_B, r_B)} t
| \nabla u(x,t) |^2
{dx dt }  <\infty.
%\iffalse+ \sup_B \left\{ {1\over |B|} \int_0^{r_B}\int_B |t^2\partial_{tt}  u(x,t)|^2 {dx dt\over t} \right\}^{1/2}\fi
\end{eqnarray}
Then we have
 \begin{itemize}
\item[(1)]  If $u\in {\rm HMO^{\alpha}_\L}(\Real_+^{n+1})$, then    there exist some $f\in \Lambda^{\alpha}_{\L}(\RR) $
and a constant $C>0$ such that $u(x,t)=e^{-t\sqrt \L}f(x)$,
and
 $
\|f\|_{\Lambda^{\alpha}_{\L}(\RR) }\leq C\|u\|_{{\rm HMO^{\alpha}_\L}(\Real_+^{n+1})}.
 $

\item[(2)]  If $f\in \Lambda^{\alpha}_{\L}(\RR) $, then  $u(x,t)=e^{-t\sqrt \L}f(x)\in {\rm HMO^{\alpha}_\L}(\Real_+^{n+1})$ with
 $
 \|u\|_{{\rm HMO^{\alpha}_\L}(\Real_+^{n+1})}\approx \|f\|_{\Lambda^{\alpha}_{\L}(\RR) }.
 $
 \end{itemize}
% $e^{-t\sqrt \L}f(x)$ exists and belongs to ${\rm HMO_\L^\alpha}(\Real_+^{n+1}).$
\end{thm}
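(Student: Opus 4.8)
The plan is to run, essentially line by line, the two–part proof of Theorem~\ref{th1.1}, replacing the homogeneity exponent $r_B^{-n}$ everywhere by $r_B^{-(n+2\alpha)}$ and the pair $({\rm BMO}_\L(\RR),H^1_\L(\RR))$ by $(\Lambda^{\alpha}_\L(\RR),H^p_\L(\RR))$ with $p=n/(n+\alpha)$; the hypothesis $q\ge n$ guarantees (as in the proof of Lemma~\ref{le3.2}) that the odd reflection $\overline{\mathbb L}$ has potential $V(x,t)=V(x)\in B_q(\Real^{n+1})$, so that the Liouville–type Lemma~\ref{le2.7} is available on $\Real^{n+1}$. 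For part (1), I would first prove the analogue of Lemma~\ref{le3.1}: for $u\in{\rm HMO^{\alpha}_\L}(\Real_+^{n+1})$ and each $k\in\mathbb{N}$, $u(x,1/k)\in L^2((1+|x|)^{-2(n+1)}dx)$. The three–piece decomposition of $u(x,1/k)-u(x/|x|,1/k)$ carries over verbatim, with two changes: Moser's estimate (Lemma~\ref{le2.6}) applied to $\partial_t u$ now gives $|\partial_t u(x,t)|\le Ct^{\alpha-1}\|u\|_{{\rm HMO^{\alpha}_\L}(\Real_+^{n+1})}$, so the radial increment of $u$ between heights $1/k$ and $|x|$ is $O(|x|^{\alpha})$, and the dyadic average of $\int t|\nabla_y u|^2$ over $2^mB$ is of size $(2^m)^{2\alpha}\|u\|^2_{{\rm HMO^{\alpha}_\L}}$; since $2(n+1)>2n-1+2\alpha$ for $\alpha<1$, the weight $(1+|x|)^{-2(n+1)}$ makes the resulting geometric series converge, and by Lemma~\ref{le2.4} the extension $e^{-t\sqrt\L}(u(\cdot,1/k))$ is well defined on $\Real_+^{n+1}$. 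Exactly as in Lemma~\ref{le3.2} one then checks $\lim_{t\to0^+}e^{-t\sqrt\L}(u(\cdot,1/k))(x)=u(x,1/k)$ (splitting off near and far parts and using Lemma~\ref{le2.3}), forms the odd reflection $\overline w$ of $w=e^{-t\sqrt\L}(u(\cdot,1/k))-u(\cdot,\cdot+1/k)$, and verifies~\eqref{e2.9} for $\overline w$ against a weight $(1+|(x,t)|)^{-M}$ with $M$ large in terms of $n,\alpha$ (the $\log$–increments of Lemma~\ref{le3.2} become $O(t^{\alpha})$ ones). Lemma~\ref{le2.7} then gives $u(x,t+1/k)=e^{-t\sqrt\L}(f_k)(x)$ with $f_k(x)=u(x,1/k)$, and the $\alpha$–analogue of Lemma~\ref{le3.4} yields $r_B^{-(n+2\alpha)}\int_0^{r_B}\int_{B(x_B,r_B)}t|\partial_t u_k|^2\,dxdt\le C\|u\|^2_{{\rm HMO^{\alpha}_\L}(\Real_+^{n+1})}$ uniformly in $k$.

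\textbf{The uniform Lipschitz bound.} The crux is $\|f_k\|_{\Lambda^{\alpha}_\L(\RR)}\le C\|u\|_{{\rm HMO^{\alpha}_\L}(\Real_+^{n+1})}$. I would use the characterization, obtained by minor modification of \cite{DY2,BHS2008} (compare the version used around~\eqref{e3.9}), that $f\in{\rm BMO}^{\alpha}_\L(\RR)=\Lambda^{\alpha}_\L(\RR)$ if and only if $f\in L^2((1+|x|)^{-(n+2\alpha+\epsilon)}dx)$ and $\sup_B r_B^{-(n/2+\alpha)}\|f-e^{-r_B\sqrt\L}f\|_{L^2(B)}<\infty$, together with the bilinear identity of Lemma~\ref{le3.7} and the $\alpha$–weighted version of Lemma~\ref{le3.6}: one re-runs the square function plus $L^2$–duality estimate there keeping the factor $r_B^{n/2+\alpha}$ on the right-hand side, which is precisely what the Carleson hypothesis $r_B^{-(n+2\alpha)}\int_{T(B)}t|\partial_t e^{-t\sqrt\L}f_k|^2\,dxdt\le C\|u\|^2$ supplies (the geometric decay $2^{-k\epsilon}$ over the far tents is unchanged). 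An $L^2(B)$–duality argument converts the resulting bound on $|\int_{\RR} f_k\,(I-e^{-r_B\sqrt\L})g|$ into the $L^2$–oscillation estimate above. Since $\Lambda^{\alpha}_\L(\RR)=(H^p_\L(\RR))^{\ast}$ with $p=n/(n+\alpha)$ (the Schr\"odinger–Hardy duality of \cite{BHS2008}; see also \cite{HLMMY}), weak-$\ast$ compactness gives $f_k\to f$ along a subsequence with $\|f\|_{\Lambda^{\alpha}_\L(\RR)}\le C\|u\|_{{\rm HMO^{\alpha}_\L}(\Real_+^{n+1})}$. Arguing as around~\eqref{e3.10} — using that $\partial_t^2{\mathcal P}_t(x,\cdot)$, the kernel of $\L e^{-t\sqrt\L}$, lies in $H^p_\L(\RR)$ for each fixed $t>0$, $x\in\RR$, by the kernel bounds of Lemma~\ref{le2.4}(ii) — the identity $\L u(x,t+1/k)=\L e^{-t\sqrt\L}f_k(x)$ passes to the limit pointwise in $(x,t)$, so $\L w(\cdot,t)=0$ on $\RR$ with $w=u-e^{-t\sqrt\L}f$; a final application of Lemma~\ref{le2.7}, after checking~\eqref{e2.9} for $w$ (now with $|e^{-t\sqrt\L}f(x)|\le C\rho(x)^{\alpha}\|f\|_{\Lambda^{\alpha}_\L}$ in place of the logarithmic bound), gives $u=e^{-t\sqrt\L}f$.

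\textbf{Part (2).} Fix $B=B(x_B,r_B)$ and split $f=(f-f_{2B})\chi_{2B}+(f-f_{2B})\chi_{(2B)^c}+f_{2B}=f_1+f_2+f_3$. For $f_1$, $L^2$–boundedness of $\nabla\L^{-1/2}$ and of the square function gives $\int_0^{r_B}\int_B|t\nabla_x e^{-t\sqrt\L}f_1|^2\frac{dxdt}{t}\le C\|f_1\|_{L^2(\RR)}^2=C\int_{2B}|f-f_{2B}|^2\le Cr_B^{n+2\alpha}\|f\|_{\Lambda^{\alpha}_\L}^2$, using~\eqref{e4.3}. For $f_2$, the full kernel bound~(i) of Lemma~\ref{le3.9} and the annular decomposition give, for $x\in B$ and $t<r_B$, $|t\nabla_x e^{-t\sqrt\L}f_2(x)|\le C\sum_{k\ge2}\frac{t}{(2^kr_B)^{n+1}}\big(\int_{2^kB\setminus 2^{k-1}B}|f-f_{2^kB}|+(2^kr_B)^n|f_{2^kB}-f_{2B}|\big)\le C\,t\,r_B^{\alpha-1}\sum_{k\ge2}2^{k(\alpha-1)}\|f\|_{\Lambda^{\alpha}_\L}\le C\,t\,r_B^{\alpha-1}\|f\|_{\Lambda^{\alpha}_\L}$ (the series converges since $\alpha<1$), hence $\int_0^{r_B}\int_B|t\nabla_x e^{-t\sqrt\L}f_2|^2\frac{dxdt}{t}\le C|B|r_B^{2\alpha-2}\int_0^{r_B}t\,dt\,\|f\|_{\Lambda^{\alpha}_\L}^2\le Cr_B^{n+2\alpha}\|f\|_{\Lambda^{\alpha}_\L}^2$. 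For $f_3=f_{2B}$, bound $|f_{2B}|$ by $Cr_B^{\alpha}\|f\|_{\Lambda^{\alpha}_\L}$ when $r_B>\rho(x_B)$ (by~\eqref{e4.4}) and by $C\rho(x_B)^{\alpha}\|f\|_{\Lambda^{\alpha}_\L}$ when $r_B\le\rho(x_B)$, and insert estimate~(iii) of Lemma~\ref{le3.9}, $|t\nabla_x e^{-t\sqrt\L}(1)(x)|\le C\min\{(t/\rho(x))^{\delta},(t/\rho(x))^{-N}\}$ with $\delta>1>\alpha$: when $r_B\le\rho(x_B)$ one gets $\le C|B||f_{2B}|^2(r_B/\rho(x_B))^{2\delta}\le Cr_B^{n+2\alpha}\|f\|_{\Lambda^{\alpha}_\L}^2$, and when $r_B>\rho(x_B)$ one covers $B$ by critical balls $\{Q_k\}$ with $\sum|Q_k|\le C|B|$ and sums the convergent $\frac{dt}{t}$–integrals over the $Q_k$ to get $\le C|B||f_{2B}|^2\le Cr_B^{n+2\alpha}\|f\|_{\Lambda^{\alpha}_\L}^2$. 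Adding the three pieces gives $\|u\|_{{\rm HMO^{\alpha}_\L}(\Real_+^{n+1})}\le C\|f\|_{\Lambda^{\alpha}_\L(\RR)}$; together with part (1) this yields $\|u\|_{{\rm HMO^{\alpha}_\L}}\approx\|f\|_{\Lambda^{\alpha}_\L}$.

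\textbf{Main obstacle.} I expect the real work to be in part (1): pinning down and proving the correct weighted–$L^2$/Carleson characterization of $\Lambda^{\alpha}_\L(\RR)$ and the $\alpha$–scaled version of Lemma~\ref{le3.6}, and — equally — making sure the Liouville step (Lemma~\ref{le2.7}) still applies once $u$ and $w$ are known only to grow polynomially of order $\alpha$, which forces the weight in~\eqref{e2.9} to be chosen heavy enough in terms of $n$ and $\alpha$ and requires the duality $\Lambda^{\alpha}_\L(\RR)=(H^{n/(n+\alpha)}_\L(\RR))^{\ast}$ to be available in the generality $V\in B_q$, $q\ge n$. Part (2), by contrast, is a routine rescaling of the computation already carried out for ${\rm BMO}_\L(\RR)$ in Section~3.
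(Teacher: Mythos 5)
Your Part (2) is essentially the same as the paper's argument, repackaged: the paper routes everything through Proposition~4.2, which inserts the intermediate characterization $\|t\nabla e^{-t\sqrt\L}f\|_{L^\infty}\le Ct^\alpha$ before passing to the Carleson bound, whereas you directly estimate $\int_0^{r_B}\int_B|t\nabla e^{-t\sqrt\L}f_i|^2\,\frac{dx\,dt}{t}$ after the split $f=f_1+f_2+f_3$; the estimates are the same in substance.

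Your Part (1), however, takes a genuinely different and much heavier route than the paper, and in doing so you miss the key simplification that makes the Lipschitz case easier than the BMO case. You propose to re-run the full Section~3 machinery — weak-$\ast$ compactness in $\Lambda^\alpha_\L(\RR)=(H^{n/(n+\alpha)}_\L(\RR))^\ast$, an $\alpha$-weighted version of Lemma~\ref{le3.6}, subsequential convergence $f_k\rightharpoonup f$, and a final application of the Liouville Lemma~\ref{le2.7} to identify the limit. The paper does none of this. The crucial observation, which follows from Lemma~\ref{le2.6} exactly as in \eqref{e3.2}, is that for $u\in{\rm HMO}^\alpha_\L$ one has $|\partial_t u(x,t)|\le C\,t^{\alpha-1}\|u\|_{{\rm HMO}^\alpha_\L}$; since $\alpha>0$, the function $t^{\alpha-1}$ is integrable near $t=0$, so for $t_1,t_2>0$
$$|u(x,t_1)-u(x,t_2)|\le C\Big|\int_{t_2}^{t_1}s^{\alpha-1}\,ds\Big|\le C|t_1-t_2|^\alpha,$$
and $u(\cdot,t)$ is Cauchy as $t\to0^+$, converging pointwise everywhere to a function $f$. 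Combining this with the uniform bound $\|u(\cdot,1/k)\|_{{\rm BMO}^\alpha_\L}\le C\|u\|_{{\rm HMO}^\alpha_\L}$ (Lemma~\ref{le4.4}) and letting $k\to\infty$ gives $\|f\|_{\Lambda^\alpha_\L}\le C\|u\|_{{\rm HMO}^\alpha_\L}$ and $u=e^{-t\sqrt\L}f$ directly — no duality, no Alaoglu, no second Liouville step. This is precisely the structural gain one has at $\alpha>0$ over $\alpha=0$ (where $\partial_t u=O(1/t)$ is not integrable), and it is the whole point of the paper's shortened Section~4 argument. Your route is not wrong in spirit, but it rests on the identification $\Lambda^\alpha_\L(\RR)=(H^{n/(n+\alpha)}_\L(\RR))^\ast$ for general $V\in B_q$, $q\ge n$ — a nontrivial input the paper never needs — and on an $\alpha$-scaled Lemma~\ref{le3.6} that you do not actually carry out; both of these would have to be justified before your version of Part (1) is complete.
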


Part (2) of Theorem~\ref{th4.1} is a straightforward result from  the following proposition.

\begin{prop}\label{prop4.2} Suppose $V\in B_q$ for some $q\geq n,$
and $\alpha\in(0, 1)$ and  $f$ is a function such that
 $$\int_{\Real^n}\frac{\abs{f(x)}}{(1+\abs{x})^{n+\alpha+\varepsilon}}~dx<\infty$$
 for some $\varepsilon>0$. Then the following statements are equivalent:

 \begin{itemize}
\item[(1)]   $f
\in \Lambda^{\alpha}_{\L}(\RR) $;

 \item[(2)]  There exists a constant $C>0$ such that
 \begin{equation*}
\|t\nabla e^{-t\sqrt {\L}}f(x) \|_{L^\infty(\Real^n)}\le Ct^\alpha;
 \end{equation*}

\item[(3)]   $u(x,t)=e^{-t\sqrt {\L}}f(x)\in {\rm HMO^{\alpha}_\L}(\Real_+^{n+1})$
 with
 $
 \|u\|_{{\rm HMO^{\alpha}_\L}(\Real_+^{n+1})}\approx \|f\|_{\Lambda^{\alpha}_{\L}(\RR) }.
 $
 %$u(x,t)=e^{-t\sqrt {\L}}f(x)\in {\rm HMO_\L^\alpha}(\Real_+^{n+1}).$
 \end{itemize}
\end{prop}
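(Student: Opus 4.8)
The plan is to establish the cycle of implications $(1)\Rightarrow(2)\Rightarrow(3)\Rightarrow(1)$, which is the natural way to route this equivalence and parallels the structure already used for ${\rm BMO}_{\L}$ in Section~3. The implication $(3)\Rightarrow(1)$ will be the analogue of part~(1) of Theorem~\ref{th4.1}, so one can simply invoke (or re-run) the machinery of Section~3 with the exponent $n$ replaced by $n+2\alpha$ in the Carleson-type condition; the decay hypothesis $\int |f(x)|(1+|x|)^{-n-\alpha-\varepsilon}\,dx<\infty$ plays the role that $f\in L^2((1+|x|)^{-2n})$ played before, and Lemma~\ref{le2.7} again forces $u=e^{-t\sqrt\L}f$ with the right boundary trace. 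So the substantive content is $(1)\Rightarrow(2)$ and $(2)\Rightarrow(3)$.

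For $(1)\Rightarrow(2)$, fix $x_0\in\RR$ and $t>0$ and split $f=(f-f(x_0))\chi_{B(x_0,t)}+(f-f(x_0))\chi_{B(x_0,t)^c}+f(x_0)$, writing $f=g_1+g_2+g_3$. Since $t\nabla_x e^{-t\sqrt\L}$ has a kernel satisfying the Gaussian-type bound in (i) of Lemma~\ref{le3.9} together with cancellation coming from the subordination formula applied to $e^{-s\L}(1)$ being controlled, I would estimate:
\begin{itemize}
\item[(a)] the local term using $|t\nabla_x e^{-t\sqrt\L}g_1(x_0)|\le \int |t\nabla_x\mathcal P_t(x_0,y)|\,|g_1(y)|\,dy\lesssim \int_{|x_0-y|\le t} t(t^2+|x_0-y|^2)^{-(n+1)/2}|x_0-y|^\alpha\,dy\lesssim t^\alpha\|f\|_{\Lambda^\alpha_\L}$;
\item[(b)] the global term by the standard annular decomposition $B(x_0,2^{k+1}t)\setminus B(x_0,2^kt)$, where on each annulus $|f(y)-f(x_0)|\lesssim (2^kt)^\alpha\|f\|_{\Lambda^\alpha_\L}$ and the kernel contributes $t(2^kt)^{-n-1}(2^kt)^n=t(2^kt)^{-1}$, so the sum is $\lesssim t^\alpha\sum_k 2^{k(\alpha-1)}\|f\|_{\Lambda^\alpha_\L}\lesssim t^\alpha\|f\|_{\Lambda^\alpha_\L}$ since $\alpha<1$;
\item[(c)] the constant term by $|t\nabla_x e^{-t\sqrt\L}f(x_0)\,f(x_0)|=|f(x_0)|\,|t\nabla_x e^{-t\sqrt\L}(1)(x_0)|\lesssim \rho(x_0)^\alpha\cdot(t/\rho(x_0))^{\delta}\lesssim t^\alpha$ when $t\le\rho(x_0)$ (using $\delta>1>\alpha$ and (iii) of Lemma~\ref{le3.9}), and when $t>\rho(x_0)$ using the rapid-decay bound $|t\nabla_x e^{-t\sqrt\L}(1)(x_0)|\lesssim (t/\rho(x_0))^{-N}$ together with $|f(x_0)|\lesssim\rho(x_0)^\alpha\le t^\alpha$.
\end{itemize}
Combining (a)--(c) gives $\|t\nabla e^{-t\sqrt\L}f\|_{L^\infty}\lesssim t^\alpha\|f\|_{\Lambda^\alpha_\L}$.

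For $(2)\Rightarrow(3)$, the pointwise bound $|t\nabla u(x,t)|\lesssim t^\alpha$ immediately yields
\[
r_B^{-(n+2\alpha)}\int_0^{r_B}\!\!\int_{B(x_B,r_B)} t|\nabla u(x,t)|^2\,dx\,dt
\lesssim r_B^{-(n+2\alpha)}\cdot|B(x_B,r_B)|\int_0^{r_B} t^{1+2\alpha}\,dt
\lesssim r_B^{-(n+2\alpha)} r_B^{n} r_B^{2+2\alpha}=r_B^{2}\cdot r_B^{?}
\]
--- here one should be careful: the integral $\int_0^{r_B}t^{1+2\alpha}dt\sim r_B^{2+2\alpha}$, times $r_B^n$, divided by $r_B^{n+2\alpha}$ gives $r_B^{2}$, which is not bounded as $r_B\to\infty$. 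So the crude bound is not enough for large balls and one must, exactly as in the proof of part~(2) of Theorem~\ref{th1.1}, treat large balls ($r_B>\rho(x_B)$) separately by covering $B$ with critical balls $\{Q_k\}$, $\sum|Q_k|\lesssim|B|$, and using on each $Q_k$ both the bound from (2) for $t\le\rho(x_k)$ and a complementary rapid-decay estimate for $t>\rho(x_k)$; on small balls $r_B\le\rho(x_B)$ the crude estimate above does give $r_B^2\le\rho(x_B)^2$, which still is not scale-invariant, so in fact even there one needs the extra decay of $t\nabla e^{-t\sqrt\L}f$ in $t$ beyond $t^\alpha$ for $t\gtrsim\rho(x_B)$. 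Thus the precise statement one must prove alongside (2) is the two-sided bound $|t\nabla e^{-t\sqrt\L}f(x)|\lesssim \min\{t^\alpha,(t/\rho(x))^{-M}\rho(x)^\alpha\}$ for some large $M$, obtained by combining the argument of $(1)\Rightarrow(2)$ with (iii) of Lemma~\ref{le3.9}; then the Carleson integral over each critical ball is $\lesssim|Q_k|$ and summing gives $\lesssim|B|\|f\|_{\Lambda^\alpha_\L}^2$.

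The main obstacle I expect is precisely this point: getting the sharp decay of $t\nabla e^{-t\sqrt\L}f$ for $t$ large relative to $\rho(x)$, since the naive $t^\alpha$ growth is false (indeed $f\in L^\infty_{\mathrm{loc}}$ with $|f(x)|\lesssim\rho(x)^\alpha$, not globally bounded by $t^\alpha$). This is where the refined gradient kernel bounds of Lemma~\ref{le3.9}, especially the $\min$ in part~(iii), are essential, and where the $B_q$, $q\ge n$, hypothesis (which upgrades to $q>n$, hence $\delta=2-n/q>1$) is used. Everything else --- the local $L^2$ boundedness of the Riesz transform argument for $f_1$, the annular sums for $f_2$, the critical-ball covering for $f_3$, and the reconstruction of $f$ from $u$ via Lemmas~\ref{le3.1}--\ref{le3.5} and~\ref{le2.7} --- is a direct transcription of Section~3 with $n\rightsquigarrow n+2\alpha$ and ${\rm BMO}_\L$-oscillation replaced by $\Lambda^\alpha_\L$-oscillation, together with the identification ${\rm BMO}^\alpha_\L=\Lambda^\alpha_\L$ from \cite[Proposition~4]{BHS2008}.
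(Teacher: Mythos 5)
Your implication $(1)\Rightarrow(2)$ is essentially the paper's argument (the paper writes $f(z)=\big(f(z)-f(x)\big)+f(x)$ without an explicit local/global split, but your annular decomposition in (b) is the same estimate unwrapped), and the handling of the constant term via Lemma~\ref{le3.9}(iii), with the two cases $t\le\rho(x)$ and $t>\rho(x)$, is exactly what the paper does. Your $(3)\Rightarrow(1)$ is routed through rebuilding the Section~3 machinery, whereas the paper simply cites \cite[Theorem~1.3]{MSTZ}; both are reasonable.

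The real problem is in $(2)\Rightarrow(3)$, where you have a sign error in an exponent that sends you on an unnecessary detour. Hypothesis (2) reads $|t\nabla u(x,t)|\le Ct^{\alpha}$, so $|\nabla u(x,t)|\le Ct^{\alpha-1}$ and hence
\[
t\,|\nabla u(x,t)|^2 \;=\; \frac{|t\nabla u(x,t)|^2}{t}\;\le\; C\,t^{2\alpha-1},
\]
\emph{not} $t^{1+2\alpha}$: you squared $t\nabla u$ and then multiplied by an extra $t$ instead of dividing by $t$. With the correct exponent,
\[
r_B^{-(n+2\alpha)}\int_0^{r_B}\!\!\int_{B} t|\nabla u(x,t)|^2\,dx\,dt
\;\lesssim\; r_B^{-(n+2\alpha)}\,r_B^{n}\int_0^{r_B}t^{2\alpha-1}\,dt
\;\sim\; r_B^{-(n+2\alpha)}\,r_B^{n}\,r_B^{2\alpha}\;=\;1,
\]
uniformly in $r_B$, which is exactly what the paper obtains. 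There is therefore no failure for large balls, no need for the critical-ball covering, and no need for the refined two-sided bound $\min\{t^{\alpha},(t/\rho(x))^{-M}\rho(x)^{\alpha}\}$ you conjectured: the crude pointwise bound from (2) already gives the Carleson condition defining ${\rm HMO}^{\alpha}_{\L}(\Real_+^{n+1})$. The ``main obstacle'' you identified is an artifact of the arithmetic slip, not a genuine difficulty.
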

\begin{proof}
{$\rm (1)\Rightarrow (2)$.}
Let $f\in \Lambda^{\alpha}_{\L}(\RR)$. One writes
\begin{align*}
   t\nabla e^{-t\sqrt {\L}}f(x)  &=  \int_{\Real^n}t\nabla {\mathcal P}_t(x,z)\left(f(z)-f(x)\right)~dz
    +f(x)
	t\nabla e^{-t\sqrt {\L}}(1)(x)  \\
    & =  I(x)+II(x).
\end{align*}
From Lemma~\ref{le3.9}, we have
 \begin{eqnarray*}
 |I(x)|&\leq & C\|f\|_{\Lambda^{\alpha}_{\L}(\RR)} \int_{\Real^n}|t\nabla {\mathcal P}_t(x,z)|\abs{x-z}^\alpha
     dz\\
	 &\leq& C\|f\|_{\Lambda^{\alpha}_{\L}(\RR)}
	 \int_{\Real^n}\frac{t\abs{x-z}^\alpha}{\left(t+\abs{x-z}\right)^{n+1}}~dz\\
 &\leq& Ct^{\alpha}\|f\|_{\Lambda^{\alpha}_{\L}(\RR)}.
\end{eqnarray*}

To estimate the term $II(x)$, we  consider two
cases.

\smallskip

\noindent
{\it Case 1:}   $\rho(x)\leq t$. In this case we use Lemma~\ref{le3.9} to obtain
  \begin{eqnarray*}
 |II(x)|&\leq & \|f\|_{\Lambda^{\alpha}_{\L}(\RR)} \rho(x)^\alpha\big|t\nabla e^{-t\sqrt {\L}}(1)(x)\big|\\
	 %&\leq& Ct^{\alpha}\|f\|_{{\mathscr E}_{\L}^{\alpha, 2}(\RR)}\int_{\Real^n}\frac{t^2}{\left(t+\abs{x-z}\right)^{n+2}}~dz\\
 &\leq& Ct^{\alpha}\|f\|_{\Lambda^{\alpha}_{\L}(\RR)}.
\end{eqnarray*}

\smallskip

\noindent
{\it Case 2:}   $\rho(x)> t$.  By  Lemma~\ref{le3.9}, there exists a $\delta>1$ such that
 \begin{eqnarray*}
  |II(x)|&\leq&
C\|f\|_{\Lambda^{\alpha}_{\L}(\RR)}\rho(x)^\alpha \Big({t\over \rho(x)}\Big)^{\delta}\\
&\leq&
C\|f\|_{\Lambda^{\alpha}_{\L}(\RR)}\rho(x)^\alpha \Big({t\over \rho(x)}\Big)^{\alpha}\\
&=&
Ct^{\alpha}\|f\|_{\Lambda^{\alpha}_{\L}(\RR)},
\end{eqnarray*}
which, together with estimate of $I(x)$, yields
$ \|t\nabla e^{-t\sqrt {\L}}f(x) \|_{L^\infty(\Real^n)}\le Ct^{\alpha}\|f\|_{\Lambda^{\alpha}_{\L}(\RR)}.
$

\smallskip

{$\rm (2)\Rightarrow (3)$.}
For every ball $B=B(x_B,r_B)$,  we have
\begin{align*}
 \int_0^{r_B}\int_B  t| \nabla e^{-t\sqrt {\L}}f(x)|^2 {dx~dt}
 &\leq C\|f\|^2_{\Lambda^{\alpha}_{\L}(\RR)}
\int_0^{r_B}\int_B t^{2\alpha-1}~ {dt~dx}\\
&\leq Cr_B^{n+2\alpha}\|f\|^2_{\Lambda^{\alpha}_{\L}(\RR)},
\end{align*}
and hence   $u(x,t)=e^{-t\sqrt {\L}}f(x)\in {\rm HMO_\L^\alpha}(\Real_+^{n+1}).$

\smallskip

 The proof of {$\rm (3)\Rightarrow (1)$} is a direct consequence of    \cite[Theorem 1.3]{MSTZ}.
 This completes the proof.
 \end{proof}

To prove part (1) of Theorem~\ref{th4.1}, we need some preliminary results.

\begin{lem}\label{le4.3}
Let $\alpha\in (0, 1).$
 For every  $u\in {\rm HMO}^{\alpha}_{\L}(\Real_+^{n+1})$ and
  every $k\in{\mathbb N}$, there exists a constant $C_{k, \alpha}>0$ such that
\begin{equation*}
\int_{\RR}{|u(x,{1/k})|^2\over 1+|x|^{2n+1}}  dx\leq C_{k, \alpha} <\infty,
\end{equation*}
and so $u(x, 1/k)\in L^2((1+|x|)^{-(2n+1)}dx)$. Hence for  all $k\in{\mathbb N}$, $e^{-t\sqrt{\L}}(u(\cdot, {1/k}))(x)$ exists
everywhere in ${\mathbb R}^{n+1}_+$.
\end{lem}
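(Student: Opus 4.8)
The plan is to adapt, with the Lipschitz scaling, the argument used for Lemma~\ref{le3.1}. Since $u\in C^1({\mathbb R}^{n+1}_+)$, the quantity $|u(x,1/k)|$ is bounded on the compact set $\{|x|\le 1\}$ and $|u(x/|x|,1/k)|\le\sup_{|\omega|=1}|u(\omega,1/k)|<\infty$, while $\int_{|x|\ge 1}(1+|x|)^{-(2n+1)}\,dx<\infty$; hence it suffices to show
\begin{equation*}
\int_{|x|\ge 1}\frac{|u(x,1/k)-u(x/|x|,1/k)|^2}{(1+|x|)^{2n+1}}\,dx\le C_{k,\alpha}\,\|u\|^2_{{\rm HMO}^{\alpha}_{\L}(\Real_+^{n+1})}<\infty.
\end{equation*}
As in the proof of Lemma~\ref{le3.1} I would decompose
\begin{equation*}
u(x,1/k)-u(x/|x|,1/k)=\big[u(x,1/k)-u(x,|x|)\big]+\big[u(x,|x|)-u(x/|x|,|x|)\big]+\big[u(x/|x|,|x|)-u(x/|x|,1/k)\big],
\end{equation*}
and denote the weighted $L^2$-norms (over $\{|x|\ge 1\}$) of the three bracketed pieces by $I$, $II$ and $III$.

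The first new ingredient is a Moser-type gradient bound at the Lipschitz scale. Applying Lemma~\ref{le2.6} to $\partial_t u$ (again a weak solution of ${\mathbb L}v=0$, the coefficients being $t$-independent) with $Y_0=(x,t)$ and $r=t/4$, and using $s\sim t$ on $B(Y_0,2r)$ exactly as in \eqref{e3.2}, one obtains
\begin{equation*}
|\partial_t u(x,t)|\le C\,t^{-1}\Big(\frac{1}{|B(x,2t)|}\int_0^{2t}\int_{B(x,2t)}s\,|\partial_s u(y,s)|^2\,dy\,ds\Big)^{1/2}\le C\,t^{\alpha-1}\,\|u\|_{{\rm HMO}^{\alpha}_{\L}(\Real_+^{n+1})},
\end{equation*}
where the last step uses the definition \eqref{e4.5} with $r_B=2t$. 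Integrating in $t$ from $1/k$ to $|x|$ gives $|u(x,|x|)-u(x,1/k)|\le C_\alpha\,|x|^{\alpha}\,\|u\|_{{\rm HMO}^{\alpha}_{\L}(\Real_+^{n+1})}$, and the same bound holds for the $III$-piece. Hence $I+III\le C_\alpha\,\|u\|^2_{{\rm HMO}^{\alpha}_{\L}(\Real_+^{n+1})}\int_{|x|\ge 1}|x|^{2\alpha}(1+|x|)^{-(2n+1)}\,dx<\infty$, the integral converging because $2\alpha<n+1$.

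For $II$ I would write, with $x=|x|\omega$, $u(x,|x|)-u(x/|x|,|x|)=\int_1^{|x|}D_r u(r\omega,|x|)\,dr$, apply the Cauchy--Schwarz inequality and pass to polar coordinates exactly as in the proof of Lemma~\ref{le3.1}; on the dyadic annulus $2^mB\backslash 2^{m-1}B$ with $B=B(0,1)$ this gives, after using $t\sim 2^m$ to insert the weight $t^{-1}$ and then \eqref{e4.5},
\begin{equation*}
\int_{2^mB\backslash 2^{m-1}B}|u(x,|x|)-u(x/|x|,|x|)|^2\,dx\le C\,2^{m(2n-1)}\Big(\frac{1}{|2^mB|}\int_0^{2^m}\int_{2^mB}t\,|\nabla_y u(y,t)|^2\,dy\,dt\Big)\le C\,2^{m(2n-1+2\alpha)}\|u\|^2_{{\rm HMO}^{\alpha}_{\L}(\Real_+^{n+1})}.
\end{equation*}
Multiplying by $(1+|x|)^{-(2n+1)}\sim 2^{-m(2n+1)}$ on that annulus and summing over $m\ge 1$ produces the geometric series $\sum_{m\ge 1}2^{m(2\alpha-2)}$, which converges precisely because $\alpha<1$; this is also the reason the weight here must carry one more power of $|x|$ than the $(1+|x|)^{-2n}$ used in Lemma~\ref{le3.1}. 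Adding the estimates for $I$, $II$ and $III$ proves the displayed inequality, so $u(\cdot,1/k)\in L^2((1+|x|)^{-(2n+1)}dx)$. Finally, since $V\in B_q$ with $q\ge n/2$, Lemma~\ref{le2.4} (together with Lemma~\ref{le2.1} to control $\rho(y)$) shows ${\mathcal P}_t(x,y)$ decays faster than any power of $1/|x-y|$, so a Cauchy--Schwarz estimate gives that $e^{-t\sqrt{\L}}(u(\cdot,1/k))(x)$ is well defined for every $(x,t)\in{\mathbb R}^{n+1}_+$. The one point needing care is the bookkeeping of exponents in the dyadic sum for $II$ and the resulting choice of weight; the rest is a routine transcription of Lemma~\ref{le3.1}.
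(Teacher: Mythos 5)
Your proof is correct and follows exactly the adaptation of Lemma~\ref{le3.1} that the paper intends; indeed the paper merely states that the proof of Lemma~\ref{le4.3} is ``analogous to that of Lemma~\ref{le3.1}'' without supplying details. You correctly identify the two places where the Lipschitz scaling enters: the Moser-type bound becomes $|\partial_t u(x,t)|\lesssim t^{\alpha-1}\|u\|_{{\rm HMO}^{\alpha}_{\L}}$ (so $I+III$ is controlled by $\int_{|x|\geq 1}|x|^{2\alpha}(1+|x|)^{-(2n+1)}\,dx$), and the extra factor $2^{2m\alpha}$ from the ${\rm HMO}^{\alpha}_{\L}$ Carleson normalization forces the weight exponent up from $2n$ to $2n+1$ so that the dyadic sum for $II$ becomes the convergent series $\sum_m 2^{m(2\alpha-2)}$.
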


\begin{lem} \label{le4.4} Let $\alpha\in (0, 1).$ For every  $u\in {\rm HMO}^{\alpha}_{\L}(\Real_+^{n+1})$ and for every $k\in{\mathbb N}$,
there exists a constant $C>0$ independent of $k$ such that
\begin{eqnarray*}
\|u(\cdot, 1/k)\|_{{\rm BMO}^{\alpha}_{\L}(\RR)}
%&\leq& C\|u_k\|_{{\rm HMO}^{\alpha}_{\L}(\Real_+^{n+1})}\\
&\leq& C\|u\|_{{\rm HMO}^{\alpha}_{\L}(\Real_+^{n+1})}.
\end{eqnarray*}
%where $u_k(x,t)=u(x, t+1/k)$.
Hence for  all $k\in{\mathbb N}$, $u(x, 1/k)$ is uniformly bounded in  ${\rm BMO}^{\alpha}_{\L}(\RR)$.
\end{lem}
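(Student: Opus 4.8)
The plan is to run the proof of Lemma~\ref{le3.5} in the Lipschitz scale. Write $f_k(x)=u(x,1/k)$. The first step is to record the $\alpha$-counterpart of the quadratic characterization used in \eqref{e3.9}: if $f$ is locally integrable and belongs to $L^2\big((1+|x|)^{-M}\,dx\big)$ for some $M>0$---which holds for $f_k$ by Lemma~\ref{le4.3} with $M=2n+1$, and which already makes $e^{-r_B\sqrt{\L}}f$ meaningful because the Poisson kernel in Lemma~\ref{le2.4} decays faster than any polynomial---then $f\in{\rm BMO}^{\alpha}_{\L}(\RR)$ if and only if
$$
\sup_{x_B,\,r_B}\ r_B^{-(n+2\alpha)}\int_{B(x_B,r_B)}\big|f(x)-e^{-r_B\sqrt{\L}}f(x)\big|^2\,dx<\infty,
$$
with comparable constants. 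I would justify this exactly as \eqref{e3.9}, by minor modifications of \cite[Proposition 6.11]{DY2} (see also \cite{DY1,HLMMY,MSTZ}): replace $|B|^{-1}$ by $r_B^{-(n+2\alpha)}$ and $e^{-r_B^2\L}$ by $e^{-r_B\sqrt{\L}}$.

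Next I would transfer to the present setting the three structural facts behind Lemma~\ref{le3.5}. First, the identity $u(x,t+1/k)=e^{-t\sqrt{\L}}(f_k)(x)$ for $x\in\RR$, $t>0$, proved exactly as Lemma~\ref{le3.2}: odd-reflect $w=e^{-t\sqrt{\L}}f_k-u(\cdot,\cdot+1/k)$ across $t=0$ to obtain an ${\overline{\mathbb L}}$-harmonic function on $\Real^{n+1}$, verify the weighted integrability \eqref{e2.9} using Lemma~\ref{le4.3} together with the $\alpha$-counterpart of the Moser gradient bound \eqref{e3.2}, and apply the Liouville-type Lemma~\ref{le2.7}. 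Second, the Carleson-type bound
$$
\sup_{x_B,\,r_B}\ r_B^{-(n+2\alpha)}\int_0^{r_B}\!\!\int_{B(x_B,r_B)}t\,|\partial_t u_k(x,t)|^2\,dx\,dt\le C\,\|u\|^2_{{\rm HMO}^{\alpha}_{\L}(\Real_+^{n+1})},\qquad u_k(x,t)=u(x,t+1/k),
$$
which is the verbatim analogue of Lemma~\ref{le3.4}, obtained from Lemma~\ref{le2.6} applied to $\partial_t u(\cdot,\cdot+1/k)$ and the defining estimate \eqref{e4.5}, treating $r_B\ge 1/k$ and $r_B<1/k$ separately. Third, the polarization identity of Lemma~\ref{le3.7}, which is $\alpha$-free and applies unchanged.

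The only ingredient requiring genuine modification is the $\alpha$-analogue of Lemma~\ref{le3.6}. Put
$$
||| \mu_{\nabla_t, f}|||^2_{{\rm car},\alpha}:=\sup_{x_B,\,r_B}\ r_B^{-(n+2\alpha)}\int_0^{r_B}\!\!\int_{B(x_B,r_B)}t\,|\partial_t e^{-t\sqrt{\L}}f(x)|^2\,dx\,dt,
$$
and, for $g\in L^2(B)$ with $B=B(x_B,r_B)$, define $F$ and $G$ by \eqref{e3.7}. I claim
$$
\int_{{\mathbb R}^{n+1}_+}|F(x,t)G(x,t)|\,\frac{dx\,dt}{t}\le C\,r_B^{(n+2\alpha)/2}\,||| \mu_{\nabla_t, f}|||_{{\rm car},\alpha}\,\|g\|_{L^2(B)}.
$$
The proof is the decomposition $\int=\mathrm{A}_1+\sum_{k\ge 2}\mathrm{A}_k$ of Lemma~\ref{le3.6} with two adjustments: the square integral of $t|\partial_t e^{-t\sqrt{\L}}f|^2$ over the tent $T(2^kB)$ now contributes the factor $(2^kr_B)^{(n+2\alpha)/2}||| \mu_{\nabla_t, f}|||_{{\rm car},\alpha}$ in place of $(2^kr_B)^{n/2}||| \mu_{\nabla_t, f}|||_{\rm car}$; and in the estimate of the companion factor coming from $G=-\int_0^{r_B}\frac{ts}{(t+s)^2}\Psi_{t,s}(\L)g\,\frac{ds}{s}$ one takes the auxiliary decay exponent $\epsilon\in(\alpha,1)$, a range that is nonempty precisely because $\alpha<1$ and admissible because the bound $C(t+s)^{\epsilon}(t+s+|x-y|)^{-n-\epsilon}$ for the kernel of $\Psi_{t,s}(\L)$ follows from Lemma~\ref{le2.4}(ii) for every $\epsilon<2$. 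With this choice the elementary $(t,s)$-integration gives $\mathrm{A}_k\le C\,2^{-k(\epsilon-\alpha)}\,r_B^{(n+2\alpha)/2}\,||| \mu_{\nabla_t, f}|||_{{\rm car},\alpha}\,\|g\|_{L^2(B)}$ and $\sum_k\mathrm{A}_k$ converges geometrically; $\mathrm{A}_1$ is handled by the $L^2$-boundedness of the square function as before.

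Combining the first three facts with this $\alpha$-analogue of Lemma~\ref{le3.6}---applied to $f=f_k$, for which $||| \mu_{\nabla_t, f_k}|||_{{\rm car},\alpha}\le C\|u\|_{{\rm HMO}^{\alpha}_{\L}(\Real_+^{n+1})}$ by the first two facts---yields, for every $g$ supported in $B$ with $\|g\|_{L^2(B)}\le 1$,
$$
\Big|\int_{\RR}f_k(x)\,(I-e^{-r_B\sqrt{\L}})g(x)\,dx\Big|\le C\,r_B^{(n+2\alpha)/2}\,\|u\|_{{\rm HMO}^{\alpha}_{\L}(\Real_+^{n+1})}.
$$
By $L^2(B)$-duality and the self-adjointness of $e^{-r_B\sqrt{\L}}$ this says $\big(r_B^{-(n+2\alpha)}\int_B|f_k-e^{-r_B\sqrt{\L}}f_k|^2\,dx\big)^{1/2}\le C\|u\|_{{\rm HMO}^{\alpha}_{\L}(\Real_+^{n+1})}$ uniformly in $k$ and $B$, and the characterization of the first paragraph then gives $\|f_k\|_{{\rm BMO}^{\alpha}_{\L}(\RR)}\le C\|u\|_{{\rm HMO}^{\alpha}_{\L}(\Real_+^{n+1})}$ with $C$ independent of $k$. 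I expect the only real point of care---the main obstacle---to be the $r_B$-scaling bookkeeping in the $\alpha$-analogue of Lemma~\ref{le3.6}: one must check that the Carleson norm enters with the exact power $r_B^{(n+2\alpha)/2}$ and that $\epsilon$ can be taken strictly above $\alpha$ (using $\alpha<1$) while keeping the kernel estimate and the elementary integration valid, so that the annular sum stays geometric.
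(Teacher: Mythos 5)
Your argument is correct, but the conversion from the Carleson bound to membership in ${\rm BMO}^{\alpha}_{\L}$ is handled by a genuinely different route than in the paper. Both you and the paper begin the same way: transfer $u(\cdot,t+1/k)=e^{-t\sqrt{\L}}(u(\cdot,1/k))$ and establish the $\alpha$-Carleson bound \eqref{e4.6} by the arguments of Lemmas~\ref{le3.2} and \ref{le3.4}. After that, the paper simply cites \cite[Theorem 1.3]{MSTZ}, which already contains the statement that the $\alpha$-Carleson condition for $t|\partial_t e^{-t\sqrt{\L}}f|^2\,dx\,dt$ characterizes ${\rm BMO}^{\alpha}_{\L}(\RR)$; the entire Lipschitz-scale version of Lemma~\ref{le3.5} is thus outsourced. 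You instead re-derive that conversion from scratch by pushing the direct (duality-free) machinery of Lemmas~\ref{le3.5}--\ref{le3.7} and the quadratic characterization \eqref{e3.9} to the $\alpha$-scale. Your route is more self-contained and has the virtue of keeping the whole proof internal to the paper's methods; the paper's route is shorter but imports an external theorem. Neither is wrong, and the scaling bookkeeping in your $\alpha$-analogue of Lemma~\ref{le3.6} checks out.

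One technical point you should make explicit rather than take for granted. In the $\alpha$-analogue of Lemma~\ref{le3.6} you propose to choose the auxiliary exponent $\epsilon\in(\alpha,1)$. The kernel bound for $\Psi_{t,s}(\L)$ indeed holds for all $\epsilon<2$, but the intermediate majorization
\[
\frac{ts\,(t+s)^{\epsilon}}{(t+s)^2}\le C\min\bigl((ts)^{\epsilon/2},\,t^{-\epsilon/2}s^{3\epsilon/2}\bigr)
\]
used in the original proof is only valid for $\epsilon\le 2/3$; the second branch fails for $t>s$ once $\epsilon>2/3$ (the paper had no issue because it took $\epsilon\in(0,1/4)$). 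If $\alpha<2/3$ you can just take $\epsilon\in(\alpha,2/3)$ and proceed verbatim. If $\alpha\ge 2/3$, you should bypass the $\min$-majorant and estimate the inner $s$-integral directly: for $\epsilon\in(0,1)$,
\[
\int_0^{r_B}t(t+s)^{\epsilon-2}\,ds\ \lesssim\ \begin{cases} t^{\epsilon}, & t\le r_B,\\[2pt] r_B\,t^{\epsilon-1}, & t> r_B,\end{cases}
\]
and then $\bigl(\int_0^{2^kr_B}|\cdot|^2\,\tfrac{dt}{t}\bigr)^{1/2}\lesssim r_B^{\epsilon}$, which is exactly the bound needed; the rest of your exponent accounting, giving ${\rm A}_k\lesssim 2^{-k(\epsilon-\alpha)}r_B^{(n+2\alpha)/2}\,|||\mu_{\nabla_t,f}|||_{{\rm car},\alpha}\|g\|_{L^2(B)}$, is then correct and the annular sum is geometric.
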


The proof of Lemma~\ref{le4.3} is   analogous to that  of Lemma~\ref{le3.1}. For Lemma~\ref{le4.4},
similar arguments as in Lemmas~\ref{le3.2} and \ref{le3.4} show  that
$u(x,t+1/k)=e^{-t\sqrt{\L}}\big( u(\cdot, 1/k)\big)(x)$ satisfies
\begin{eqnarray}\label{e4.6} \hspace{1cm}
\sup_{x_B, r_B}     r_B^{-(n+2\alpha)} \int_0^{r_B}\int_{B(x_B, r_B)} t | \partial_t e^{-t\sqrt{\L}}\big( u(\cdot, 1/k)\big)(x) |^2
{dx dt } \leq C\|u\|^2_{{\rm HMO}^{\alpha}_{\L}(\Real_+^{n+1})}
\end{eqnarray}
 for all $k\in{\mathbb N}.$
This,  together with  \cite[Theorem 1.3]{MSTZ}, shows that $u(x, 1/k)\in {\rm BMO}^{\alpha}_{\L}(\RR)$, and then
 the norm of $u(x, 1/k)$ in  the space ${\rm BMO}^{\alpha}_{\L}(\RR)$ is less than  the left hand side of \eqref{e4.6}.
 We omit the detail here.

 \smallskip

 \begin{proof}[Proof of part   (1)  Theorem~\ref{th4.1}] By  using Lemma~\ref{le2.6} for   $\partial_{t}u(x, t+{1/k})$ and
 a similar argument as in \eqref{e3.2}  we show that
  \begin{equation*}
 | \partial_{t}u(x, t) | \leq C t^{\alpha-1}  \|u\|_{{\rm HMO}^{\alpha}_{\L}(\Real_+^{n+1})},
  \end{equation*}
and hence if $t_1, t_2>0$,
 \begin{align*}
 |u(x, t_1)-u(x, t_2)|&=\big|\int_{t_2}^{t_1}\partial_s u(x, s)ds\big|
 \leq C \|u\|_{{\rm HMO}^{\alpha}_{\L}(\Real_+^{n+1})}
 \big|\int_{t_2}^{t_1}s^{\alpha-1}ds\big|\\
 &\leq C|t_1^{\alpha}-t_2^{\alpha}|
  \leq C|t_1-t_2|^{\alpha}
\end{align*}
since $0<\alpha<1.$ The family $\{u(x, t)\}$ is a Cauchy sequence as $t$ tends to zero and hence converges to some function $f(x)$
everywhere.

Now we apply Lemma~\ref{le4.4}, and note that for all $k\in{\mathbb N}$,
\begin{equation*}
\|u(\cdot, 1/k)\|_{{\rm BMO}^{\alpha}_{\L}(\RR)}\leq  C\|u\|_{{\rm HMO}^{\alpha}_{\L}(\Real_+^{n+1})}<\infty.
\end{equation*}
Letting $k$ tend to $\infty$,    we conclude
\begin{equation*}
\|f\|_{\Lambda^{\alpha}_{\L}(\RR)}\leq C
\|f\|_{{\rm BMO}^{\alpha}_{\L}(\RR)} \leq C\|u\|_{{\rm HMO}^{\alpha}_{\L}(\Real_+^{n+1})},
\end{equation*}
and hence
 $u(x,t)=e^{-t\sqrt{\L}}f(x).$
This completes the proof of part (1) of Theorem~\ref{th4.1}.
 \end{proof}

\bigskip

{\bf Acknowledgments.} X.T. Duong was
supported by Australia Research Council (ARC).  L.  Yan was supported by
 NNSF of China (Grant No.  10925106 and 11371378),
 Guangdong Province Key Laboratory of Computational Science
 and Grant for Senior Scholars from the Association of Colleges and Universities of Guangdong.
 C. Zhang was supported by the General Financial Grant from the China Postdoctoral Science Foundation (Grant No. 2013M531883).
 L.X. Yan would like to thank S. Hofmann, Z.W. Shen and J. Xiao for helpful discussions.

 \bigskip

 %%%%%%%%%%%%%%%%%%%%%%%%%%%%%%%%%%%%%%%%%%%%%%%%%%%%%%

%%%%%%%%%%%%%%%%%%%%%%%%%%%%%%%%%%%%%%%%%%%%%%%%%%%%%%

\end{document}